\DeclareMathAlphabet{\mathcal}{OMS}{cmsy}{m}{n}
\def\ps@pprintTitle{%
 \let\@oddhead\@empty
 \let\@evenhead\@empty
 \def\@oddfoot{\centerline{\thepage}}%
 \let\@evenfoot\@oddfoot}
\newcommand{\bbC}{\mathbb{C}}
\newcommand{\bbF}{\mathbb{F}}
\newcommand{\bbH}{\mathbb{H}}
\newcommand{\bbQ}{\mathbb{Q}}
\newcommand{\bbR}{\mathbb{R}}
\newcommand{\bbT}{\mathbb{T}}
\newcommand{\bbZ}{\mathbb{Z}}
\newcommand{\bfA}{\mathbf{A}}
\newcommand{\bfC}{\mathbf{C}}
\newcommand{\bff}{\mathbf{f}}
\newcommand{\bfE}{\mathbf{E}}
\newcommand{\bfF}{\mathbf{F}}
\newcommand{\bfg}{\mathbf{g}}
\newcommand{\bfG}{\mathbf{G}}
\newcommand{\bfI}{\mathbf{I}}
\newcommand{\bfJ}{\mathbf{J}}
\newcommand{\bfP}{\mathbf{P}}
\newcommand{\bfU}{\mathbf{U}}
\newcommand{\bfx}{\mathbf{x}}
\newcommand{\bfX}{\mathbf{X}}
\newcommand{\bfy}{\mathbf{y}}
\newcommand{\bfZ}{\mathbf{Z}}
\newcommand{\bfone}{\boldsymbol{1}}
\newcommand{\bfzero}{\boldsymbol{0}}
\newcommand{\bfdelta}{\boldsymbol{\delta}}
\newcommand{\bfphi}{\boldsymbol{\varphi}}
\newcommand{\bfPhi}{\boldsymbol{\Phi}}
\newcommand{\bfpsi}{\boldsymbol{\psi}}
\newcommand{\bfPsi}{\boldsymbol{\Psi}}
\newcommand{\calB}{\mathcal{B}}
\newcommand{\calD}{\mathcal{D}}
\newcommand{\calE}{\mathcal{E}}
\newcommand{\calG}{\mathcal{G}}
\newcommand{\calH}{\mathcal{H}}
\newcommand{\calJ}{\mathcal{J}}
\newcommand{\calK}{\mathcal{K}}
\newcommand{\calT}{\mathcal{T}}
\newcommand{\calU}{\mathcal{U}}
\newcommand{\calV}{\mathcal{V}}
\newcommand{\calX}{\mathcal{X}}
\newcommand{\calY}{\mathcal{Y}}
\newcommand{\rmc}{\mathrm{c}}
\newcommand{\rmi}{\mathrm{i}}
\newcommand{\rmO}{\mathrm{O}}
\newcommand{\rmT}{\mathrm{T}}
\newcommand{\Tr}{\operatorname{Tr}}
\newcommand{\tr}{\operatorname{tr}}
\newcommand{\opt}{{\operatorname{opt}}}
\newcommand{\dist}{\operatorname{dist}}
\newcommand{\rank}{\operatorname{rank}}
\newcommand{\BIBD}{{\operatorname{BIBD}}}
\newcommand{\Span}{\operatorname{span}}
\newcommand{\Spark}{{\operatorname{spark}}}
\newcommand{\Fro}{\mathrm{Fro}}
\newcommand{\abs}[1]{|{#1}|}
\newcommand{\bigparen}[1]{\bigl({#1}\bigr)}
\newcommand{\bracket}[1]{[{#1}]}
\newcommand{\bigbracket}[1]{\bigl[{#1}\bigr]}
\newcommand{\set}[1]{\{{#1}\}}
\newcommand{\bigset}[1]{\bigl\{{#1}\bigr\}}
\newcommand{\norm}[1]{\|{#1}\|}
\newcommand{\biggnorm}[1]{\biggl\|{#1}\biggr\|}
\newcommand{\ip}[2]{\langle{#1},{#2}\rangle}
\newtheorem{theorem}{Theorem}[section]
\newtheorem{lemma}[theorem]{Lemma}
\newtheorem{corollary}[theorem]{Corollary}
\theoremstyle{definition}
\newtheorem{definition}[theorem]{Definition}
\newtheorem{example}[theorem]{Example}
\begin{document}
\begin{frontmatter}
\title{Equiangular tight frames that contain regular simplices}

\author[AFIT]{Matthew Fickus}
\ead{Matthew.Fickus@gmail.com}
\author[SDSU]{John Jasper}
\author[Bremen]{Emily J.\ King}
\author[OSU]{Dustin G.\ Mixon}

\address[AFIT]{Department of Mathematics and Statistics, Air Force Institute of Technology, Wright-Patterson AFB, OH 45433}
\address[SDSU]{Department of Mathematics and Statistics, South Dakota State University, Brookings, SD 57007}
\address[Bremen]{Department of Mathematics and Computer Science, University of Bremen, Bremen, Germany 28359}
\address[OSU]{Department of Mathematics, Ohio State University, Columbus, OH 43210}

\begin{abstract}
An equiangular tight frame (ETF) is a type of optimal packing of lines in Euclidean space.
A regular simplex is a special type of ETF in which the number of vectors is one more than the dimension of the space they span.
In this paper, we consider ETFs that contain a regular simplex,
that is, have the property that a subset of its vectors forms a regular simplex.
As we explain, such ETFs are characterized as those that achieve equality in a certain well-known bound from the theory of compressed sensing.
We then consider the so-called binder of such an ETF, namely the set of all regular simplices that it contains.
We provide a new algorithm for computing this binder in terms of products of entries of the ETF's Gram matrix.
In certain circumstances, we show this binder can be used to produce a particularly elegant Naimark complement of the corresponding ETF.
Other times, an ETF is a disjoint union of regular simplices, and we show this leads to a certain type of optimal packing of subspaces known as an equichordal tight fusion frame.
We conclude by considering the extent to which these ideas can be applied to numerous known constructions of ETFs, including harmonic ETFs.
\end{abstract}

\begin{keyword}
equiangular \sep tight \sep frame  \MSC[2010] 42C15
\end{keyword}
\end{frontmatter}

\section{Introduction}

Let $n$ and $d$ be positive integers with $n\geq d$, and let $\bbF$ be either $\bbR$ or $\bbC$.
The \textit{coherence} of a sequence $\set{\bfphi_j}_{j=1}^{n}$ of $n$ nonzero equal norm vectors in a $d$-dimensional Hilbert space $\bbH$ over $\bbF$ is
\begin{equation}
\label{equation.definition of coherence}
\mu:=\max_{j\neq j'}\tfrac{\abs{\ip{\bfphi_j}{\bfphi_{j'}}}}{\norm{\bfphi_j}\norm{\bfphi_{j'}}}.
\end{equation}
In the real case,
each vector $\bfphi_j$ spans a line and $\mu$ is the cosine of the smallest angle between any pair of these lines.
Our work here is motivated by two well-known bounds involving $\mu$.
The first of these is the \textit{Welch bound}~\cite{Welch74},
which is a lower bound on $\mu$ whenever $n\geq d$:
\begin{equation}
\label{equation.Welch bound}
\mu\geq\bigbracket{\tfrac{n-d}{d(n-1)}}^{\frac12}.
\end{equation}
The second bound arises in compressed sensing~\cite{DonohoE02,BrucksteinDE09},
and gives a lower bound on the \textit{spark} of $\set{\bfphi_j}_{j=1}^{n}$,
namely the smallest number of these vectors that are linearly dependent:
\begin{equation}
\label{equation.spark bound}
\Spark\set{\bfphi_j}_{j=1}^{n}\geq\tfrac1{\mu}+1.
\end{equation}

It is well known that \smash{$\set{\bfphi_j}_{j=1}^{n}$} achieves equality in the Welch bound~\eqref{equation.Welch bound} if and only if it is an \textit{equiangular tight frame} (ETF) for $\bbH$, that is, if and only if the value of $\abs{\ip{\bfphi_j}{\bfphi_{j'}}}$ is constant over all $j\neq j'$ (equiangularity) and there exists $a>0$ such that $\sum_{j=1}^{n}\abs{\ip{\bfphi_j}{\bfx}}^2=a\norm{\bfx}^2$ for all $\bfx\in\bbH$ (tightness)~\cite{StrohmerH03}.
This paper focuses on ETFs that achieve equality in~\eqref{equation.spark bound}.
As we shall see, this happens precisely when the ETF contains a \textit{regular simplex},
namely when for some positive integer $s$ there are $s+1$ of the $\bfphi_j$ vectors that form an ETF for an $s$-dimensional subspace of $\bbH$.
Of the few infinite families of ETFs that are known,
a remarkably large proportion of them have this property.
This raises the following fundamental question: in general, to what extent do ETFs contain regular simplices?
Our results here are some first steps towards an answer.

ETFs arise in several applications including waveform design for wireless communication~\cite{StrohmerH03}, compressed sensing~\cite{BajwaCM12,BandeiraFMW13}, quantum information theory~\cite{Zauner99,RenesBSC04} and algebraic coding theory~\cite{JasperMF14}.
They also seem to be rare~\cite{FickusM16}.
With the exception of orthonormal bases and regular simplices,
every known infinite family of ETFs involves some type of combinatorial design.
Real ETFs are equivalent to a subclass of strongly regular graphs (SRGs)~\cite{vanLintS66,Seidel76,HolmesP04,Waldron09},
and such graphs have been actively studied for decades~\cite{Brouwer07,Brouwer15,CorneilM91}.
This equivalence has been partially generalized to the complex setting in various ways,
including approaches that exploit properties of roots of unity~\cite{BodmannPT09,BodmannE10},
distance-regular covers of complete graphs (DRACKNs)~\cite{CoutinkhoGSZ16},
and association schemes~\cite{IversonJM16}.
Necessary integrality conditions on the existence of various types of ETFs are given in~\cite{SustikTDH07}.

Conference matrices, Hadamard matrices, Paley tournaments and quadratic residues are related, and lead to infinite families of ETFs whose \textit{redundancy} $\frac nd$ is either nearly or exactly two~\cite{StrohmerH03,HolmesP04,Renes07,Strohmer08}.
\textit{Harmonic ETFs} and \textit{Steiner ETFs} offer much more freedom in choosing $d$ and $n$.
Harmonic ETFs are equivalent to difference sets in finite abelian groups~\cite{Turyn65,StrohmerH03,XiaZG05,DingF07}.
Steiner ETFs arise from particular types of balanced incomplete block designs (BIBDs)~\cite{GoethalsS70,FickusMT12}.
Recent generalizations of Steiner ETFs have led to new infinite families of ETFs arising from projective planes that contain hyperovals~\cite{FickusMJ16} as well as from Steiner triple systems~\cite{FickusJMP18}.
Another new family arises by generalizing the SRG construction of~\cite{Godsil92} to the complex setting~\cite{FickusJMPW19}.

Many of these known constructions give ETFs that contain a regular simplex,
and thus achieve equality in both~\eqref{equation.Welch bound} and~\eqref{equation.spark bound}.
For example, every Steiner ETF is a disjoint union of regular simplices by construction.
This property is also enjoyed by harmonic ETFs arising from McFarland difference sets,
since it is known that they can be obtained by applying unitary operators to certain Steiner ETFs~\cite{JasperMF14}.
We study ETFs that contain regular simplices in general, and then explore the degree to which the ETFs constructed in~\cite{XiaZG05,DingF07,FickusMJ16,FickusJMP18,FickusJMPW19} have this property.

In particular, in the next section, we establish notation, discuss some known results that we will need later on, and elaborate on the connections between these ideas and compressed sensing.
In Section~3, we show that an ETF achieves equality in~\eqref{equation.spark bound} if and only if it contains a regular simplex (Theorem~\ref{theorem.spark bound equality}),
and give a strong necessary condition on the existence of real ETFs that are \textit{full spark} (Theorem~\ref{theorem.real full spark ETFs}).
In the fourth section, we characterize regular simplices that are contained in an ETF in terms of \textit{triple products} (Theorem~\ref{theorem.triple products}), and then use this idea to develop an algorithm for computing the \textit{binder} of an ETF, namely the set of all simplices it contains.
We build upon these ideas in Section~5, discovering an intimate connection that sometimes arises between an ETF's binder and the phased BIBD ETFs of~\cite{FickusJMPW19}; see Theorems~\ref{theorem.phased binder} and~\ref{theorem.Naimark complement of phased BIBD ETF}.
In Section~6, we give several results about ETFs that happen to be disjoint unions of regular simplices, in particular relating them to \textit{equichordal tight fusion frames} (ECTFFs); see Theorem~\ref{theorem.equichordal}.
In the final section, we then apply these ideas to better understand various existing constructions of ETFs,
in particular showing that certain harmonic ETFs are disjoint unions of regular simplices (Theorems~\ref{theorem.harmonic ETFs} and~\ref{theorem.instances of harmonic ETFs}).

\section{Background}

\subsection{Equiangular tight frames}

Let $\bbF$ be either $\bbR$ or $\bbC$,
and let $\bbH$ be a $d$-dimensional Hilbert space over $\bbF$.
In frame theory, the \textit{synthesis operator} of a finite sequence of vectors $\set{\bfphi_j}_{j=1}^{n}$ in $\bbH$ is $\bfPhi:\bbF^n\rightarrow\bbH$,
$\bfPhi\bfx:=\sum_{j=1}^{n}\bfx(j)\bfphi_j$,
where $\bfx(j)$ denotes the $j$th entry of $\bfx\in\bbF^n$.
Its adjoint is the corresponding \textit{analysis operator} $\bfPhi^*:\bbH\rightarrow\bbF^n$ which satisfies
$(\bfPhi^*\bfy)(j)=\ip{\bfphi_j}{\bfx}$ for all $j=1,\dotsc,n$.
Here and throughout, inner products are taken to be conjugate-linear in its first argument and linear in its second.
Applying the analysis operator to the synthesis operator yields the \textit{Gram matrix} $\bfPhi^*\bfPhi:\bbF^n\rightarrow\bbF^n$,
an $n\times n$ matrix whose $(j,j')$th entry is $(\bfPhi^*\bfPhi)(j,j')=\ip{\bfphi_j}{\bfphi_{j'}}$.
Taking the reverse composition gives the \textit{frame operator} $\bfPhi\bfPhi^*:\bbH\rightarrow\bbH$,
\smash{$\bfPhi\bfPhi^*\bfy=\sum_{j=1}^{n}\ip{\bfphi_j}{\bfy}\bfphi_j$}.
It is well known that sequences $\set{\bfphi_j}_{j=1}^{n}$ in $\bbH$ and \smash{$\set{\hat{\bfphi}_j}_{j=1}^{n}$} in $\hat{\bbH}$ have the same Gram matrix if and only if there exists a unitary operator \smash{$\bfU:\bbH\rightarrow\hat{\bbH}$} such that $\hat{\bfphi}_j=\bfU\bfphi_j$ for all $j=1,\dotsc,n$.

As a slight abuse of notation,
for each $j=1,\dotsc,n$,
we denote the synthesis and analysis operators of the single vector $\set{\bfphi_j}$ as
$\bfphi_j:\bbF\rightarrow\bbH$, $\bfphi_j x=x\bfphi_j$
and $\bfphi_j^*:\bbH\rightarrow\bbF$, $\bfphi_j^*y=\ip{\bfphi_j}{\bfy}$, respectively.
Under this notation, the frame operator of the sequence $\set{\bfphi_j}_{j=1}^{n}$ can be written as $\bfPhi\bfPhi^*=\sum_{j=1}^{n}\bfphi_j^{}\bfphi_j^*$.
We say $\set{\bfphi_j}_{j=1}^{n}$ is \textit{equal norm} if there exists $c>0$ such that $\norm{\bfphi_j}^2=c$ for all $j=1,\dotsc,n$,
and say an equal norm sequence $\set{\bfphi_j}_{j=1}^{n}$ is \textit{equiangular} if there exists $w$ such that $\abs{\ip{\bfphi_j}{\bfphi_{j'}}}^2=w$ for all $j\neq j'$.
We say $\set{\bfphi_j}_{j=1}^{n}$ is a \textit{tight frame} for
$\bbH$ if there exists $a>0$ such that $\bfPhi\bfPhi^*=a\bfI$, that is, if $\bfy=\frac1a\sum_{j=1}^{n}\ip{\bfphi_j}{\bfy}\bfphi_j$ for all $\bfy\in\bbH$;
this requires $n\geq d=\dim(\bbH)$.
If $\set{\bfphi_j}_{j=1}^{n}$ is an equal norm tight frame for $\bbH$,
the constants $c$ and $a$ are related:
$da=\Tr(a\bfI)=\Tr(\bfPhi\bfPhi^*)=\Tr(\bfPhi^*\bfPhi)=\sum_{j=1}^{n}\norm{\bfphi_j}^2=nc$.
Moreover, for any equal norm vectors $\set{\bfphi_j}_{j=1}^{n}$, a direct computation reveals
\begin{equation*}
0
\leq
\norm{\bfPhi\bfPhi^*-\tfrac{nc}d\bfI}_{\Fro}^2
=\sum_{j=1}^{n}\sum_{\substack{j'=1\\j'\neq j}}^{n} \abs{\ip{\bfphi_j}{\bfphi_{j'}}}^2-\tfrac{n(n-d)c^2}{d}
\leq n(n-1)\max_{j\neq j'}\abs{\ip{\bfphi_j}{\bfphi_{j'}}}^2-\tfrac{n(n-d)c^2}{d}.
\end{equation*}
Rearranging this inequality and writing it in terms of the coherence~\eqref{equation.definition of coherence} gives the Welch bound~\eqref{equation.Welch bound}.
Furthermore, $\set{\bfphi_j}_{j=1}^{n}$ achieves equality in~\eqref{equation.Welch bound} if and only if both inequalities above hold with equality, that is, if and only if $\set{\bfphi_j}_{j=1}^{n}$ is both equiangular and a tight frame, namely an ETF.

Sometimes an ETF $\set{\bfphi_j}_{j=1}^{n}$ is most naturally represented by regarding $\bbH=\Span\set{\bfphi_j}_{j=1}^{n}$ as a subspace of some larger Hilbert space \smash{$\hat{\bbH}$}.
(To clarify, as seen by its derivation above, the Welch bound holds for the dimension $d$ of any Hilbert space $\bbH$ that contains $\set{\bfphi_j}_{j=1}^{n}$, and $\set{\bfphi_j}_{j=1}^{n}$ achieves equality in it if and only if $\set{\bfphi_j}_{j=1}^{n}$ is an ETF for $\bbH$.)
There, we can regard \smash{$\hat{\bbH}$} as the codomain of $\bfPhi$,
leading to an extension \smash{$\bfPhi^*:\hat{\bbH}\rightarrow\bbF^n$} of the analysis operator.
Here, we have the following characterization of when $\set{\bfphi_j}_{j=1}^{n}$ is a tight frame for its span:
\begin{lemma}[Lemma~1 of~\cite{FickusMJ16}]
\label{lemma:tight frame for its span}
For any $a>0$, the following are equivalent:
\begin{enumerate}
\renewcommand{\labelenumi}{(\roman{enumi})}
\item $\set{\bfphi_j}_{j=1}^n$ is an $a$-tight frame for its span, that is, $\bfPhi\bfPhi^*\bfy=a\bfy$ for all $\bfy\in\Span\set{\bfphi_j}_{j=1}^{n}$,
\item $\bfPhi\bfPhi^*\bfPhi = a\bfPhi$,
\item $(\bfPhi\bfPhi^*)^2 = a\bfPhi\bfPhi^*$,
\item $(\bfPhi^*\bfPhi)^2 = a\bfPhi^*\bfPhi$.
\end{enumerate}
\end{lemma}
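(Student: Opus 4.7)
The plan is to prove the four equivalences by establishing the cycle $(\mathrm{i})\Rightarrow(\mathrm{ii})\Rightarrow(\mathrm{iii})\Rightarrow(\mathrm{i})$ together with the implications $(\mathrm{ii})\Rightarrow(\mathrm{iv})\Rightarrow(\mathrm{ii})$. Most of the directions are routine manipulations of the synthesis/analysis operators, and the only step that needs genuine care is $(\mathrm{iv})\Rightarrow(\mathrm{ii})$, which I handle via a norm-zero trick, and $(\mathrm{iii})\Rightarrow(\mathrm{i})$, which uses the spectral theorem for self-adjoint operators.

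For $(\mathrm{i})\Rightarrow(\mathrm{ii})$, I observe that each $\bfphi_j$ lies in $\Span\set{\bfphi_j}_{j=1}^n$, so by (i) one has $\bfPhi\bfPhi^*\bfphi_j = a\bfphi_j$ for every $j$; since $\bfPhi$ is determined by its action on standard basis vectors of $\bbF^n$, this is equivalent to $\bfPhi\bfPhi^*\bfPhi = a\bfPhi$. For $(\mathrm{ii})\Rightarrow(\mathrm{iii})$ and $(\mathrm{ii})\Rightarrow(\mathrm{iv})$, I multiply the identity in (ii) on the right and on the left by $\bfPhi^*$, respectively.

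For $(\mathrm{iii})\Rightarrow(\mathrm{i})$, I use the fact that $\bfPhi\bfPhi^*$ is a positive semidefinite self-adjoint operator, so the spectral theorem provides an orthogonal eigenbasis with nonnegative eigenvalues; the identity $(\bfPhi\bfPhi^*)^2 = a\bfPhi\bfPhi^*$ forces each eigenvalue $\lambda$ to satisfy $\lambda^2 = a\lambda$, so $\lambda\in\set{0,a}$. Since $\bfPhi\bfPhi^*$ is self-adjoint, its range equals $(\ker\bfPhi^*)^\perp = \operatorname{range}\bfPhi = \Span\set{\bfphi_j}_{j=1}^n$, and on this eigenspace for eigenvalue $a$ the operator acts as $a\bfI$, giving (i). Finally, for $(\mathrm{iv})\Rightarrow(\mathrm{ii})$, I exploit that if (iv) holds, then for every $\bfx\in\bbF^n$,
\begin{equation*}
\norm{\bfPhi\bfPhi^*\bfPhi\bfx - a\bfPhi\bfx}^2
= \ip{(\bfPhi^*\bfPhi - a\bfI)\bfx}{(\bfPhi^*\bfPhi)^2\bfx - a\bfPhi^*\bfPhi\bfx}
= 0,
\end{equation*}
so $\bfPhi\bfPhi^*\bfPhi\bfx = a\bfPhi\bfx$ identically, which is (ii).

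The main obstacle is essentially cosmetic: identifying the right sequence of multiplications and the one norm-expansion that turns the $n\times n$ identity in (iv) back into the operator identity in (ii). Everything else is bookkeeping, and no deeper structure of $\set{\bfphi_j}_{j=1}^n$ (such as equiangularity) is needed — the statement is purely about idempotency up to scaling for the frame/Gram operators of an arbitrary finite sequence.
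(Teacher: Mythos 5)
Your proof is correct and complete: the cycle $(\mathrm{i})\Rightarrow(\mathrm{ii})\Rightarrow(\mathrm{iii})\Rightarrow(\mathrm{i})$ together with $(\mathrm{ii})\Leftrightarrow(\mathrm{iv})$ covers all four equivalences, and each step checks out, including the norm-zero expansion for $(\mathrm{iv})\Rightarrow(\mathrm{ii})$ (which is the same device the paper itself uses later to show $\bfPhi\bfPsi^*=\bfzero$ for Naimark complements) and the observation in $(\mathrm{iii})\Rightarrow(\mathrm{i})$ that the range of $\bfPhi\bfPhi^*$ equals $\Span\set{\bfphi_j}_{j=1}^n$ and is acted on as $a\bfI$. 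The paper does not prove this lemma itself—it cites Lemma~1 of~\cite{FickusMJ16}—so there is nothing to compare beyond noting that your argument is the standard one and is entirely adequate.
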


In particular, $\set{\bfphi_j}_{j=1}^{n}$ is an ETF for its span if and only if it is equiangular and these properties hold.
In this case, the dimension $d$ of its span can be inferred from $c=\norm{\bfphi_j}^2$ and $a=\frac{nc}d$, namely
\begin{equation}
\label{equation.dimension of ETF span}
d=\tfrac{nc}{a}.
\end{equation}
In the special case where \smash{$\hat{\bbH}=\bbF^m$}, $\bfPhi$ is naturally represented by the $m\times n$ matrix whose $j$th column is $\bfphi_j$, and $\bfPhi^*$ is its $n\times m$ conjugate transpose.

\subsection{Regular simplices and Naimark complements}

Since any subset of equiangular vectors is automatically equiangular, it is natural to ask whether a given ETF contains a subset that is an ETF for its span.
This leads to the construction of~\cite{FickusMJ16}, for example.
In this paper, we are chiefly concerned with ETFs that contain a very simple type of ETF for its span known as a regular simplex:

\begin{definition}
\label{definition.simplex}
For a positive integer $s$, a sequence of $s+1$ vectors $\set{\bfphi_j}_{j=1}^{s+1}$ in a Hilbert space $\bbH$ is a \textit{regular $s$-simplex} if it is an ETF for an $s$-dimensional subspace of $\bbH$, namely when it is an ETF for its span and this span has dimension $s$.
\end{definition}

In the case where a regular $s$-simplex appears as a subset of a larger ETF $\set{\bfphi_j}_{j=1}^{n}$ we say that $\set{\bfphi_j}_{j=1}^{n}$ \textit{contains} the simplex.
Letting $n=s+1$ and $d=s$ in~\eqref{equation.Welch bound} gives that the coherence of a regular $s$-simplex is \smash{$\frac1s$}.
Since the coherence of a subset of $\set{\bfphi_j}_{j=1}^{n}$ equals that of $\set{\bfphi_j}_{j=1}^{n}$ itself,
we see that if an ETF $\set{\bfphi_j}_{j=1}^{n}$ for a $d$-dimensional space $\bbH$ contains a regular $s$-simplex, then $s$ is necessarily the reciprocal of the Welch bound, namely:
\begin{equation}
\label{equation.inverse Welch bound}
s
=\bigbracket{\tfrac{d(n-1)}{n-d}}^{\frac12}.
\end{equation}
In particular, an ETF can only contain a simplex if its coherence is the reciprocal of an integer.
This immediately implies that some ETFs do not contain a regular simplex,
including certain real ETFs with $n=2d$ as well as many complex ETFs.
For real ETFs with $n\neq 2d$ and $1<d<n-1$, it is known that its inverse Welch bound is necessarily an odd integer~\cite{SustikTDH07}, and so such ETFs already meet this necessary condition.
It turns out this necessary condition is not sufficient.
For example, we will give an example of a $9$-vector ETF for \smash{$\bbC^6$} that contains no simplices, despite the fact that $s=4$ is an integer.

Regular simplices exist for any positive integer $s$.
To see this, and to help find any regular simplices that are contained in a given ETF,
it helps to discuss the notion of the \textit{Naimark complement} of a tight frame.
Specifically, if $\set{\bfphi_j}_{j=1}^{n}$ is any $a$-tight frame for its $d$-dimensional span where $d<n$,
then its $n\times n$ Gram matrix $\bfPhi^*\bfPhi$ has eigenvalues $a$ and $0$ with multiplicities $d$ and $n-d$, respectively.
As such, $a\bfI-\bfPhi^*\bfPhi$ has eigenvalues $a$ and $0$ with multiplicities $n-d$ and $d$, respectively,
meaning there exist vectors $\set{\bfpsi_j}_{j=1}^{n}$ in an $(n-d)$-dimensional Hilbert space such that $\bfPsi^*\bfPsi=a\bfI-\bfPhi^*\bfPhi$.
Being defined by their Gram matrix, the vectors $\set{\bfpsi_j}_{j=1}^{n}$ are only unique up to unitary transformations.
Particular examples can be constructed, for example, by representing $\bfPhi$ as a $d\times n$ matrix and choosing $\bfPsi$ to be an $(n-d)\times n$ matrix whose rows complete those of $\bfPhi$ to an equal norm orthogonal basis for $\bbF^n$.
If $\set{\bfphi_j}_{j=1}^{n}$ and $\set{\bfpsi_j}_{j=1}^{n}$ are Naimark complements then \begin{equation}
\label{equation.cross frame operator of Naimark complements}
\bfPhi\bfPsi^*=\bfzero.
\end{equation}
Indeed, Lemma~\ref{lemma:tight frame for its span} gives
$(\bfPsi\bfPhi^*)^*(\bfPsi\bfPhi^*)
=\bfPhi\bfPsi^*\bfPsi\bfPhi^*
=\bfPhi(a\bfI-\bfPhi^*\bfPhi)\bfPhi^*
=a\bfPhi\bfPhi^*-(\bfPhi\bfPhi^*)^2
=\bfzero$,
implying $\norm{\bfPsi\bfPhi^*}_{\Fro}^2=\Tr(\bfzero)=0$,
and so $\bfPsi\bfPhi^*$ and its adjoint $\bfPhi\bfPsi^*$ are zero.

When $\set{\bfphi_j}_{j=1}^{n}$ is an ETF for its span,
so is each one of its Naimark complements:
we have $(\bfPsi^*\bfPsi)^2=(a\bfI-\bfPhi^*\bfPhi)^2=a(a\bfI-\bfPhi^*\bfPhi)=a\bfPsi^*\bfPsi$, $\norm{\bfpsi_j}^2=a-\norm{\bfphi_j}^2=a-c$ for all $j$, and $\ip{\bfpsi_j}{\bfpsi_{j'}}=-\ip{\bfphi_j}{\bfphi_{j'}}$ for all $j\neq j'$.
In particular, for any positive integer $s$, a regular $s$-simplex exists:
any sequence $\set{c_j}_{j=1}^{s+1}$ of $s+1$ unimodular scalars is an ETF for $\bbF^1$, and so any one of its Naimark complements is an $(s+1)$-vector ETF for an $s$-dimensional space.
Conversely,
if \smash{$\set{\bfphi_j}_{j=1}^{s+1}$} is a regular simplex then any one of its Naimark complements is an ETF for $\bbF^1$.

\subsection{The restricted isometry property}

In compressed sensing, a sequence of vectors $\set{\bfphi_j}_{j=1}^{n}$ in $\bbH$ is said to have the \textit{restricted isometry property} (RIP) for a given positive integer $k$ and $\delta\in[0,1)$ if
\begin{equation}
\label{equation.RIP}
(1-\delta)\norm{\bfx}^2
\leq\norm{\bfPhi\bfx}^2
\leq(1+\delta)\norm{\bfx}^2
\end{equation}
for all \textit{$k$-sparse} vectors $\bfx\in\bbF^n$,
that is, for all $\bfx\in\bbF^n$ with at most $k$ nonzero entries.
Equivalently, letting $\calK$ be a subset of $[n]:=\set{1,\dotsc,n}$ and letting $\bfPhi_\calK:\bbF^{\calK}\rightarrow\bbH$ denote the synthesis operator of $\set{\bfphi_j}_{j\in\calK}$,
we have that $\set{\bfphi_j}_{j=1}^{n}$ is $(k,\delta)$-RIP if $\norm{\bfPhi_\calK^*\bfPhi_\calK^{}-\bfI}_2\leq\delta$
for all $k$-element subsets $\calK$ of $[n]$.
Note that since $\delta<1$, each matrix $\bfPhi_\calK^*\bfPhi_\calK^{}$ is invertible,
implying any $k$ vectors $\set{\bfphi_j}_{j\in\calK}$ are linearly independent,
and so $\Spark\set{\bfphi_j}_{j=1}^{n}\geq k+1$.
From this perspective, RIP is a numerically robust notion of spark: it means that every $k$ of the vectors $\set{\bfphi_j}_{j\in\calK}$ are $\delta$-close to being orthonormal.

Coherence is related to RIP in several ways.
Most simply, if $\set{\bfphi_j}_{j=1}^{n}$ is unit norm and $(k,\delta)$-RIP for some $k\geq2$, then it is also $(2,\delta)$-RIP, meaning
\begin{equation*}
\delta
\geq\max_{\#(\calK)=2}\norm{\bfPhi_\calK^*\bfPhi_\calK^{}-\bfI}_2
=\max_{j\neq j'}\left\|\left[\begin{array}{cc}
0&\ip{\bfphi_j}{\bfphi_{j'}}\\
\ip{\bfphi_{j'}}{\bfphi_j}&0
\end{array}\right]\right\|_2
=\max_{j\neq j'}\abs{\ip{\bfphi_j}{\bfphi_{j'}}}
=\mu.
\end{equation*}
That is, the coherence $\mu$ is a lower bound on $\delta$.
For a corresponding upper bound, note that by the Gershgorin circle theorem,
the optimal $\delta$ for any given $k$ and unit vectors $\set{\bfphi_j}_{j=1}^{n}$ satisfies
\begin{equation*}
\delta_{\opt}
:=\max_{\#(\calK)=k}\norm{\bfPhi_\calK^*\bfPhi_\calK^{}-\bfI}_2
\leq\max_{\#(\calK)=k}\max_{j\in\calK}
\sum_{\substack{j'\in\calK\\j'\neq j}}\abs{\ip{\bfphi_{j'}}{\bfphi_j}}
\leq (k-1)\mu.
\end{equation*}
In particular, $\set{\bfphi_j}_{j=1}^{n}$ is $(k,\delta)$-RIP for some $\delta\in[0,1)$ if $(k-1)\mu<1$, that is, if \smash{$k<\frac1{\mu}+1$}.
This fact leads to~\eqref{equation.spark bound}:
we have $\Spark\set{\bfphi_j}_{j=1}^{n}\geq k+1$ for all \smash{$k<\frac1{\mu}+1$},
and so \smash{$\Spark\set{\bfphi_j}_{j=1}^{n}\geq\frac1{\mu}+1$}.

The fact that $\mu\leq\delta_{\opt}\leq(k-1)\mu$ suggests that ETFs are RIP to a high degree.
However, this is only true to a certain extent.
To elaborate, when $n\geq 2d$ we have \smash{$\frac{n-1}{n-d}\leq 2$} and the Welch bound~\eqref{equation.Welch bound} implies
\smash{$\frac1{\mu}+1\leq(2d)^{\frac12}+1$}.
Thus, the sufficient condition \smash{$k<\frac1{\mu}+1$} only holds for \smash{$k=\rmO(d^{\frac12})$}.
In contrast, certain random matrices are RIP with high probability provided
$k=\rmO(d/\operatorname{polylog}(n))$.
This is great in compressed sensing applications since it means the number of measurements $d$ used to sense a $k$-sparse signal $\bfx\in\bbF^n$ can grow almost-linearly with $k$, instead of quadratically.

The fact that coherence-based guarantees of RIP only permit \smash{$k=\rmO(d^{\frac12})$} is known as the \textit{square-root bottleneck}.
To date, \cite{BourgainDFKK11} gives the only deterministic construction of a sequence $\set{\bfphi_j}_{j=1}^{n}$ that is guaranteed to be RIP with \smash{$k=\rmO(d^{\frac12+\varepsilon})$} for some $\varepsilon>0$,
and this $\varepsilon$ seems extremely small~\cite{Mixon13}.
Whether any ETF can be RIP to a degree that rivals random matrices is a fundamental open problem~\cite{BandeiraFMW13}.
However, it is known that certain ETFs are incapable of breaking the square-root bottleneck:
any ETF that achieves equality in~\eqref{equation.spark bound} contains \smash{$\frac1\mu+1$} vectors that are linearly dependent,
and \smash{$\frac1\mu=\rmO(d^{\frac12})$} for any infinite family of ETFs for which $\frac nd$ is bounded away from $1$.
This includes all Steiner ETFs~\cite{FickusMT12}.

In Theorem~\ref{theorem.spark bound equality} below, we show that an ETF achieves equality in~\eqref{equation.spark bound} if and only if it contains a regular simplex.
As such, from an RIP perspective, these ETFs are the ``worst of the best," having the worst possible spark of all $\set{\bfphi_j}_{j=1}^{n}$ in $\bbH$ that have the best possible coherence.
We study them here to help narrow down the search for ETFs with better RIP properties,
and to gain more insight into ETFs in general, since even ETFs that achieve equality in~\eqref{equation.spark bound} arise in several applications, including coherence-based compressed sensing~\cite{Tropp04}.

\section{The spark of an equiangular tight frame}

In this section, we explore the extreme values that the spark of an ETF can attain.
We begin by considering the lower bound on the spark given in~\eqref{equation.spark bound}:

\begin{theorem}
\label{theorem.spark bound equality}
If $\set{\bfphi_j}_{j=1}^{n}$ is an ETF for a Hilbert space $\bbH$ of dimension $d$,
then a subset $\set{\bfphi_j}_{j\in\calK}$
of this ETF is a regular simplex if and only if it consists of $s+1$ linearly dependent vectors where $s$ is the inverse Welch bound~\eqref{equation.inverse Welch bound}.

As a corollary, $\set{\bfphi_j}_{j=1}^{n}$ achieves equality in~\eqref{equation.spark bound} if and only if it contains a regular simplex.
\end{theorem}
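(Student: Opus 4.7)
The forward direction is essentially a coherence computation: if a subset is itself a regular $r$-simplex, then it is an ETF for an $r$-dimensional space on $r+1$ vectors, so by the Welch-equality characterization its coherence equals $1/r$; but the coherence of any subset of $\{\bfphi_j\}_{j=1}^n$ equals the ambient $\mu$, forcing $r=1/\mu=s$ as in~\eqref{equation.inverse Welch bound}. The $r+1$ vectors in an $r$-dimensional span are automatically linearly dependent.

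For the converse, given $\calK$ with $|\calK|=s+1$ whose vectors are linearly dependent, I would study the subset Gram matrix $G:=\bfPhi_\calK^*\bfPhi_\calK$. With $c:=\norm{\bfphi_j}^2$, its diagonal entries equal $c$ and its off-diagonal entries have magnitude $\mu c=c/s$, so $\Tr(G)$ and $\norm{G}_\Fro^2$ are straightforward to write down. Substituting $\mu=1/s$ yields the eigenvalue identities
\begin{equation*}
\sum_{i} \lambda_i = (s+1)c,\qquad \sum_{i}\lambda_i^2 = \tfrac{(s+1)^2 c^2}{s}.
\end{equation*}
The key step is then a Cauchy--Schwarz argument on the nonzero eigenvalues: if $k$ of the $\lambda_i$ are nonzero, Cauchy--Schwarz gives $(s+1)^2 c^2 \leq k\cdot (s+1)^2c^2/s$, i.e., $k\geq s$, while linear dependence forces $k=\rank(G)\leq s$; hence $k=s$, and the Cauchy--Schwarz equality case then forces all of the nonzero eigenvalues to coincide at $(s+1)c/s$. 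This places $G$ in the form $G^2=aG$ with $a=(s+1)c/s$, so Lemma~\ref{lemma:tight frame for its span}(iv) promotes $\{\bfphi_j\}_{j\in\calK}$ to a tight frame for its $s$-dimensional span. Combined with the equiangularity inherited from the ambient ETF, this is a regular $s$-simplex.

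The corollary is then immediate: containment of a regular $s$-simplex provides $s+1=\tfrac1\mu+1$ linearly dependent vectors, matching~\eqref{equation.spark bound}; conversely, any $s+1$ linearly dependent vectors witnessing equality in~\eqref{equation.spark bound} are a regular simplex by the theorem just proved.

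The main subtlety I anticipate is precisely the Cauchy--Schwarz step: one must recognize that $\mu=1/s$ is exactly what makes $(\sum\lambda_i)^2 = s\sum\lambda_i^2$ the boundary case, simultaneously pinning down both the rank of $G$ and the equality of its nonzero eigenvalues. Everything else is bookkeeping with Lemma~\ref{lemma:tight frame for its span}, together with the implicit observation that if $s$ fails to be an integer then the ``if'' direction is vacuous.
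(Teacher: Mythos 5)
Your proof is correct; the forward direction and the corollary match the paper's argument, but your converse takes a genuinely different route. The paper disposes of the converse in two lines by appealing to the Welch-bound-equality characterization of ETFs: the $s+1$ linearly dependent vectors lie in some $s$-dimensional subspace $\bbH_0$, their coherence is $\bigbracket{\tfrac{n-d}{d(n-1)}}^{1/2}=\tfrac1s$, which is precisely the Welch bound for $s+1$ equal-norm vectors in an $s$-dimensional space, so they achieve equality there and hence form an ETF for $\bbH_0$, i.e.\ a regular simplex. You instead pin down the spectrum of the subset Gram matrix $G=\bfPhi_\calK^*\bfPhi_\calK$ directly: the trace and Frobenius-norm identities, the Cauchy--Schwarz bound $(\sum_i\lambda_i)^2\leq k\sum_i\lambda_i^2$ over the $k=\rank(G)$ nonzero eigenvalues, and the rank bound $k\leq s$ coming from linear dependence together force $k=s$ with all nonzero eigenvalues equal to $(s+1)c/s$, whence $G^2=aG$ and Lemma~\ref{lemma:tight frame for its span}(iv) plus inherited equiangularity finish the job. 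In effect you re-derive, in this special case, the Frobenius-norm argument that underlies the Welch bound itself; the paper's proof is shorter because it simply cites that characterization (with the Section~2 parenthetical that the bound and its equality case apply to any ambient space containing the vectors), while yours is self-contained and has the side benefit of exhibiting the spectrum of $G$ explicitly---eigenvalue $(s+1)c/s$ of multiplicity $s$ and $0$ of multiplicity $1$---which is exactly the rank-one-complement structure the paper later exploits in Theorem~\ref{theorem.triple products}.
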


\begin{proof}
($\Rightarrow$)
For any positive integer $s$,
any regular $s$-simplex in \smash{$\set{\bfphi_j}_{j=1}^{n}$} may without loss of generality be written as \smash{$\set{\bfphi_j}_{j=1}^{s+1}$}.
Having an $s$-dimensional span by assumption, the vectors \smash{$\set{\bfphi_j}_{j=1}^{s+1}$} are linearly dependent.
Moreover, since \smash{$\set{\bfphi_j}_{j=1}^{n}$} is equiangular, its coherence equals that of \smash{$\set{\bfphi_j}_{j=1}^{s+1}$}.
Since both sequences are ETFs---for spaces of dimension $d$ and $s$, respectively---this means their Welch bounds are equal:
\begin{equation*}
\bigbracket{\tfrac{n-d}{d(n-1)}}^{\frac12}
=\bigset{\tfrac{(s+1)-s}{s[(s+1)-1]}}^{\frac12}
=\tfrac1s.
\end{equation*}
($\Leftarrow$)
Let $s$ be given by~\eqref{equation.inverse Welch bound} and without loss of generality write any $s+1$ linearly dependent vectors from \smash{$\set{\bfphi_j}_{j=1}^{n}$} as \smash{$\set{\bfphi_j}_{j=1}^{s+1}$}.
Being linearly dependent, \smash{$\set{\bfphi_j}_{j=1}^{s+1}$} is contained in some $s$-dimensional subspace $\bbH_0$ of $\bbH$.
Being a subset of the ETF \smash{$\set{\bfphi_j}_{j=1}^{n}$},  \smash{$\set{\bfphi_j}_{j=1}^{s+1}$} has coherence
\smash{$\bracket{\tfrac{n-d}{d(n-1)}}^{\frac12}=\tfrac1s$}.
In particular, \smash{$\set{\bfphi_j}_{j=1}^{n}$} meets the Welch bound for any $s+1$ equal norm vectors in $\bbH_0$, meaning it is an ETF for $\bbH_0$, and so is a regular simplex.

For the second conclusion, note that if~\eqref{equation.spark bound} holds then \smash{$\set{\bfphi_j}_{j=1}^{n}$} has spark $s+1$ where $s$ is given by~\eqref{equation.inverse Welch bound}.
The corresponding $s+1$ linearly dependent vectors are a regular simplex.
Conversely, any regular simplex contained in \smash{$\set{\bfphi_j}_{j=1}^{n}$} consists of $s+1$ linearly dependent vectors where $s$ is given by~\eqref{equation.inverse Welch bound}.
Thus, \smash{$\Spark\set{\bfphi_j}_{j=1}^{n}\leq s+1$}.
At the same time, \eqref{equation.spark bound} and~\eqref{equation.inverse Welch bound} give \smash{$\Spark\set{\bfphi_j}_{j=1}^{n}\geq s+1$},
implying~\eqref{equation.spark bound} holds with equality.
\end{proof}

Combining this result with~\eqref{equation.spark bound}, we see that when an ETF contains a regular simplex,
the simplices it contains are precisely the smallest linearly dependent subsets of it,
namely the smallest \textit{circuits} of the corresponding \textit{matroid}~\cite{Cahill09,King15}.
For a small example of a nontrivial ETF that contains a regular simplex, consider the columns $\set{\bfphi_j}_{j=1}^{10}$ of the matrix
\begin{equation}
\label{equation.5x10 ETF}
\bfPhi
=\left[\begin{array}{rrrrrrrrrr}
 1& 1& 1& 1& 1& 1& 1& 1& 1& 1\\
 1& 1& 1& 1&-1&-1&-1&-1&-1&-1\\
 1&-1&-1&-1& 1& 1& 1&-1&-1&-1\\
-1& 1&-1&-1& 1&-1&-1& 1& 1&-1\\
-1&-1& 1&-1&-1& 1&-1& 1&-1& 1\\
-1&-1&-1& 1&-1&-1& 1&-1& 1& 1
\end{array}\right].
\end{equation}
These $10$ columns form an ETF for the $5$-dimensional orthogonal complement of the all-ones vector in $\bbR^6$,
since they clearly lie in that space and their coherence achieves the corresponding Welch bound $\frac13$.
Here, Theorem~\ref{theorem.spark bound equality} gives that a subset of $\set{\bfphi_j}_{j=1}^{10}$ is a regular simplex if and only if it consists of $4$ linearly dependent vectors.
These subsets exist: we have $\bfphi_1-\bfphi_2-\bfphi_6+\bfphi_8=\bfzero$ for example, meaning $\set{\bfphi_1,\bfphi_2,\bfphi_6,\bfphi_8}$ is a regular simplex, that is, a $4$-vector ETF for its $3$-dimensional span.
As such, $\Spark\set{\bfphi_j}_{j=1}^{10}=4$.

Having characterized when the spark of an ETF achieves one extreme, we turn to the other:
if $\set{\bfphi_j}_{j=1}^{n}$ is any sequence of $n$ vectors in a Hilbert space $\bbH$ of dimension $d\leq n-1$,
then any $d+1$ of these vectors are linearly dependent and so $\Spark\set{\bfphi_j}_{j=1}^{n}\leq d+1$.
Moreover, in this setting $\set{\bfphi_j}_{j=1}^{n}$ achieves equality in this upper bound if and only if it is \textit{full spark}, namely when every $d$-vector subset of $\set{\bfphi_j}_{j=1}^{n}$ forms a basis for $\bbH$.
Every orthonormal basis for $\bbH$ is a full spark ETF for it.
Moreover, by~\eqref{equation.spark bound}, every regular simplex is a full spark ETF for its span.
Certain harmonic ETFs are also known to be full spark, including all of those arising from difference sets in groups of prime order~\cite{AlexeevCM12}.
For any prime $p\equiv 3\bmod 4$ for example, we can form a $p$-vector full spark ETF for \smash{$\bbC^{\frac{p-1}2}$} by extracting the $\frac{p-1}2$ rows of a $p\times p$ discrete Fourier transform matrix that correspond to quadratic residues in $\bbZ_p$.
Apart from these families, few examples of full spark ETFs are known,
and they may be rare.
In fact, we can prove this is indeed the case in the real setting:

\begin{theorem}
\label{theorem.real full spark ETFs}
If $\set{\bfphi_j}_{j=1}^{n}$ is a full spark ETF for $\bbR^d$ and $1<d<n-1$ then $n=2d$.
\end{theorem}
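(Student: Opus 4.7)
The plan is to argue by contrapositive: assuming $\set{\bfphi_j}_{j=1}^n$ is a real ETF for $\bbR^d$ with $1<d<n-1$ and $n\neq 2d$, I would show it is not full spark. First, the property of being full spark is preserved under Naimark complementation: after rescaling so that the columns of $\bfPhi$ are unit norm, stacking $\bfPhi$ atop a Naimark complement $\bfPsi$ yields a scalar multiple of an $n\times n$ orthogonal matrix (using $\bfPhi\bfPhi^*=(n/d)\bfI$, $\bfPsi\bfPsi^*=(n/d)\bfI$ and~\eqref{equation.cross frame operator of Naimark complements}), and the standard identity for complementary minors of an orthogonal matrix gives $\det(\bfPhi_\calK)=0\Leftrightarrow\det(\bfPsi_{[n]\setminus\calK})=0$ for every $d$-subset $\calK\subseteq[n]$. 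Thus $\set{\bfphi_j}_{j=1}^n$ is full spark in $\bbR^d$ iff $\set{\bfpsi_j}_{j=1}^n$ is full spark in $\bbR^{n-d}$, so without loss of generality $d\leq n-d$, and since $n\neq 2d$, we may assume $n>2d$.

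Next, by the integrality result of~\cite{SustikTDH07} cited in the introduction, for real ETFs with $n\neq 2d$ and $1<d<n-1$ the inverse Welch bound $s=[d(n-1)/(n-d)]^{\frac12}$ is a positive (odd) integer. A brief algebraic check shows $s<d$ whenever $d+1<n$: indeed $s^2<d^2$ rearranges via $d(n-1)<d^2(n-d)$ to $(d-1)(d+1)<n(d-1)$, which reduces under $d>1$ to $d+1<n$, our hypothesis. Hence $s+1\leq d<d+1$. Combining with Theorem~\ref{theorem.spark bound equality}, the full-spark hypothesis $\Spark\set{\bfphi_j}_{j=1}^n=d+1$ forces the ETF to contain no regular simplex (since any such simplex would give $s+1\leq d$ linearly dependent vectors).

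The main obstacle is then to derive a contradiction from this last fact, namely to establish that under the above hypotheses the ETF must in fact contain a regular $s$-simplex. My approach would be to exploit the integer structure of the Seidel signature matrix $\mathbf{S}$, defined by $\bfPhi^*\bfPhi=\bfI+\tfrac{1}{s}\mathbf{S}$: its $\pm 1$ off-diagonal entries and integer eigenvalues $-s$ (multiplicity $n-d$) and $s(n-d)/d$ (multiplicity $d$) make $\mathbf{S}$ the signature matrix of a regular two-graph. A contained regular $s$-simplex corresponds to an $(s+1)$-subset $\calK\subseteq[n]$ and sign pattern $\boldsymbol{\epsilon}\in\set{\pm 1}^\calK$ with $\mathbf{S}_{\calK,\calK}=\bfI_\calK-\boldsymbol{\epsilon}\boldsymbol{\epsilon}^{\rmT}$; equivalently, after Seidel switching by $\boldsymbol{\epsilon}$, to a principal $(s+1)\times(s+1)$ block of $\mathbf{S}$ whose off-diagonal entries are all $-1$. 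I expect producing such a block to be the technical crux, and would attempt it via counting or Krein-type parameter inequalities on the strongly regular graph obtained from $\mathbf{S}$ by Seidel switching, which become nontrivial precisely when $s$ is a genuine integer with $3\leq s+1\leq d<n-d$.
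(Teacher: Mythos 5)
Your opening move matches the paper's: full spark passes to Naimark complements (your complementary-minors sketch is a fine justification of the cited fact), so one may assume $n>2d$. After that, however, you take a route whose crux is left unproven, and that crux is the entire difficulty. You reduce the theorem to the claim that a real ETF with $n>2d$ and $1<d<n-1$ must \emph{contain} a regular $s$-simplex, and then say you ``would attempt'' to produce the required coherent $(s+1)$-subset of the Seidel matrix via counting or Krein-type inequalities. No such argument is given, and there is no reason to expect one to be easy: the paper explicitly warns that integrality of $s$ is necessary but \emph{not} sufficient for an ETF to contain a regular simplex, and even Theorem~\ref{theorem.BIBD of triples}, which shows the admissible triples of a real ETF form a $\BIBD(n,3,\lambda)$, gives no mechanism for assembling $s+1$ indices all of whose triples are admissible --- that is a clique-existence problem in a $3$-uniform hypergraph, which counting alone does not settle. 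So as written the proposal proves nothing beyond the Naimark reduction.

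The paper's actual proof sidesteps simplices entirely and needs far less: with $n\geq 2d+1$, equiangularity gives $\abs{\ip{\bfphi_j}{\bfphi_{2d}}}=\abs{\ip{\bfphi_j}{\bfphi_{2d+1}}}$ for $j=1,\dotsc,2d-1$ while $\bfphi_{2d}\neq\pm\bfphi_{2d+1}$ (as $d>1$), so $\set{\bfphi_j}_{j=1}^{2d-1}$ cannot do phase retrieval; by the complement-property characterization of~\cite{BalanCE06,BandeiraCMN14}, these $2d-1$ vectors split into two sets neither of which spans $\bbR^d$, and the larger set has at least $d$ vectors lying in a proper subspace, forcing $\Spark\leq d$. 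Note this only requires exhibiting $d$ linearly dependent vectors, not $s+1$ of them forming a simplex --- a much weaker target, which is why the argument closes. If you want to salvage your approach, you would need to either prove the simplex-existence claim for real ETFs with $n>2d$ (which would be a substantial new result, stronger than anything in this paper) or find another source of a small dependent set, which is exactly what the phase-retrieval argument supplies.
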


\begin{proof}
Let $\set{\bfphi_j}_{j=1}^{n}$ be a full spark ETF for $\bbR^d$ where $1<d<n-1$,
and assume to the contrary that $n\neq 2d$.
It is known that a tight frame is full spark if and only if its Naimark complements are too: see Theorem~4 of~\cite{AlexeevCM12}, its proof, and subsequent comments.
As such, we may assume without loss of generality that $n>2d$.
In particular, we have \smash{$\abs{\ip{\bfphi_j}{\bfphi_{2d}}}=\abs{\ip{\bfphi_j}{\bfphi_{2d+1}}}$} for all $j=1,\dotsc,2d-1$.
Moreover, $\bfphi_{2d}\neq\pm\bfphi_{2d+1}$ since $\set{\bfphi_j}_{j=1}^{n}$ is an ETF for $\bbR^d$ with $d>1$.
Together, these facts imply that \smash{$\set{\bfphi_j}_{j=1}^{2d-1}$} is incapable of \textit{phase retrieval}.
And, by the characterization of phase retrieval in $\bbR^d$ given in~\cite{BalanCE06}, this means
$\set{\bfphi_j}_{j=1}^{n}$ fails to have the \textit{complement property}~\cite{BandeiraCMN14}.
That is, the $2d-1$ vectors \smash{$\set{\bfphi_j}_{j=1}^{2d-1}$} can be partitioned into two subsets with the property that neither subset spans $\bbR^d$.
In particular, the larger of these two sets contains at least $d$ vectors but does not span $\bbR^d$, implying $\Spark\set{\bfphi_j}_{j=1}^{n}\leq d$, a contradiction.
\end{proof}

Real full spark ETFs with $n=2d$ exist.
For example,
for the well-known $6$-vector ETF for $\bbR^3$,
\eqref{equation.spark bound} implies $\Spark\set{\bfphi_j}_{j=1}^{6}\geq\sqrt{5}+1$ and so $\Spark\set{\bfphi_j}_{j=1}^{6}=4$.
That said, even the strong necessary condition of Theorem~\ref{theorem.real full spark ETFs} is not sufficient.
Indeed, the columns $\set{\bfphi_j}_{j=1}^{10}$ of~\eqref{equation.5x10 ETF} form a real $10$-vector ETF for a $5$-dimensional space, and we have already seen that $\Spark\set{\bfphi_j}_{j=1}^{10}=4<5$.
For the remainder of this paper, we focus on ETFs that contain a regular simplex,
and leave a deeper investigation of full spark ETFs for future research.

\section{Finding the regular simplices contained in an equiangular tight frame}

In this section we focus on the following problem:
is there a computationally reasonable way of finding any and all regular simplices contained in a given ETF?
In light of Theorem~\ref{theorem.spark bound equality},
this equates to letting $s$ be the reciprocal of the Welch bound~\eqref{equation.inverse Welch bound}, and finding all subsets $\calK$ of $s+1$ indices in $[n]$ such that $\set{\bfphi_j}_{j\in\calK}$ is linearly dependent.
We give this set a name:

\begin{definition}
The \textit{binder} of an ETF $\set{\bfphi_j}_{j=1}^{n}$ is the set of all subsets $\calK$ of $[n]$ such that $\set{\bfphi_j}_{j\in\calK}$ is a regular simplex.
\end{definition}

For the ETF given in~\eqref{equation.5x10 ETF} for example,
we want to find all linearly dependent $4$-vector subsets of $\set{\bfphi_j}_{j=1}^{10}$.
In such a small example, this can be done by simply checking all \smash{$\binom{10}{4}=210$} subsets of this size.
Doing such reveals $15$ simplices, each indexed by a $4$-element subset of $\set{1,\dotsc,10}$, namely the binder:
\begin{equation}
\label{equation.5x10 binder}
\begin{gathered}
\set{1,2,6,8},
\set{1,2,7,9},
\set{1,3,5,8},
\set{1,3,7,10},
\set{1,4,5,9},
\set{1,4,6,10},
\set{2,3,5,6},
\set{2,3,9,10},\\
\set{2,4,5,7},
\set{2,4,8,10},
\set{3,4,6,7},
\set{3,4,8,9},
\set{5,6,9,10},
\set{5,7,8,10},
\set{6,7,8,9}.
\end{gathered}
\end{equation}
However, this direct approach is computationally intractable for even modest values of $n$ and $d$.
For example, a $96$-vector ETF for $\bbC^{76}$ has \smash{$\binom{n}{s+1}=\binom{96}{20}\approx 2.16\times 10^{20}$} subsets of size $s+1=20$.
Our numerical experiments indicate that it would take a current desktop computer billions of years to check them all for linear dependence.
We now discuss an alternative approach that, at least for certain ETFs of this size, computes their binders in a matter of hours on the same hardware.

The main idea is that given the Gram matrix of an ETF,
a principal submatrix of it that arises from a regular simplex is very special:
since that regular simplex is a Naimark complement of an ETF for $\bbF^1$,
this principal submatrix is closely related to a rank-one matrix.
As we show below, this means they can be simply characterized in terms of the \textit{triple products} of the ETF $\set{\bfphi_j}_{j=1}^{n}$, namely the values
$\ip{\bfphi_{j_1}}{\bfphi_{j_2}}\ip{\bfphi_{j_2}}{\bfphi_{j_3}}\ip{\bfphi_{j_3}}{\bfphi_{j_1}}$
over all distinct $j_1,j_2,j_3\in[n]$.

Triple products have recently been used to help characterize frames up to \textit{projective unitary equivalence}~\cite{ChienW16}.
Also see the related concept of \textit{flux} in~\cite{BodmannE10b}.
To elaborate, given any ETF $\set{\bfphi_j}_{j=1}^{n}$ for $\bbH$,
any unimodular scalars $\set{c_j}_{j=1}^{n}$,
unitary operator \smash{$\bfU:\bbH\rightarrow\hat{\bbH}$}, and permutation $\sigma:[n]\rightarrow[n]$,
it is straightforward to show that $\set{c_j\bfU\bfphi_{\sigma(j)}}_{j=1}^{n}$ is an ETF for \smash{$\hat{\bbH}$}.
We say two ETFs are \textit{equivalent} if they are related in this way.
Equivalent ETFs have much in common.
For example, they have the same spark and are RIP to the same degree.
(Other important properties, such as \textit{flatness}, are not necessarily preserved by this equivalence~\cite{JasperMF14,FickusMJ16}.)

Given two ETFs with the same $d$ and $n$ parameters, it is often not easy to determine whether they are equivalent.
To be clear, the ambiguity due to $\bfU$ can be removed by instead working with the ETFs' Gram matrices:
the true problem is given two self-adjoint $n\times n$ matrices $\bfG$ and $\hat{\bfG}$ with constant diagonal entries, off-diagonal entries of constant modulus,
and \smash{$\bfG^2=a\bfG$} and \smash{$\hat{\bfG}^2=a\hat{\bfG}$} for some $a>0$,
do there exist unimodular scalars $\set{c_j}_{j=1}^{n}$ and a permutation $\sigma$ such that
\smash{$\hat{\bfG}(j,j')=\overline{c_j}c_{j'}\bfG(\sigma(j),\sigma(j'))$} for all $j,j'=1,\dotsc,n$?
In the special case where $\sigma$ is the identity, the two ETFs are said to be \textit{projective unitary equivalent}~\cite{ChienW16},
and this question can be quickly answered by asking whether the matrix with entries $\hat{\bfG}(j,j')/\bfG(j,j')$ is rank one.
On the other hand, even in the special case where the ETFs are real and $c_j=1$ for all $j$,
determining equivalence reduces to the famously difficult \textit{graph isomorphism problem}.

Triple products are useful when determining when two ETFs are equivalent, since they are invariant with respect to both unimodular scalars and unitary transformations:
\begin{multline}
\label{equation.triple product invariance}
\ip{c_{j_1}\bfU\bfphi_{j_1}}{c_{j_2}\bfU\bfphi_{j_2}}\ip{c_{j_2}\bfU\bfphi_{j_2}}{c_{j_3}\bfU\bfphi_{j_3}}\ip{c_{j_3}\bfU\bfphi_{j_3}}{c_{j_1}\bfU\bfphi_{j_1}}\\
=\overline{c_{j_1}}c_{j_2}\overline{c_{j_2}}c_{j_3}\overline{c_{j_3}}c_{j_1}\ip{\bfphi_{j_1}}{\bfphi_{j_2}}\ip{\bfphi_{j_2}}{\bfphi_{j_3}}\ip{\bfphi_{j_3}}{\bfphi_{j_1}}
=\ip{\bfphi_{j_1}}{\bfphi_{j_2}}\ip{\bfphi_{j_2}}{\bfphi_{j_3}}\ip{\bfphi_{j_3}}{\bfphi_{j_1}}.
\end{multline}
We now use them to characterize the regular simplices contained in an ETF.
The main idea is that a self-adjoint matrix with unimodular entries has rank one if and only if all of its triple products have value $1$.
As such, a regular simplex---the Naimark complement of a rank-one matrix---is characterized by having triple products of value $(-1)^3=-1$:

\begin{theorem}
\label{theorem.triple products}
Let $\set{\bfphi_j}_{j=1}^{n}$ be an equiangular tight frame for a space of dimension $d<n$,
without loss of generality scaled so that
\smash{$\norm{\bfphi_j}^2=s$} where $s$ is the inverse Welch bound~\eqref{equation.inverse Welch bound}.
Then for any $(s+1)$-element subset $\calK$ of $[n]$, $\set{\bfphi_j}_{j\in\calK}$ is a regular simplex if and only if
\begin{equation}
\label{equation.triple product condition}
\ip{\bfphi_{j_1}}{\bfphi_{j_2}}\ip{\bfphi_{j_2}}{\bfphi_{j_3}}\ip{\bfphi_{j_3}}{\bfphi_{j_1}}=-1
\end{equation}
for all pairwise-distinct $j_1,j_2,j_3\in\calK$.
Moreover,
in this case a sequence of scalars $\set{c_j}_{j\in\calK}$ is a Naimark complement for $\set{\bfphi_j}_{j\in\calK}$ if and only if
\begin{equation}
\label{equation.regular simplex Naimark complement}
\sum_{j=1}^{s+1}\overline{c_j}\bfphi_j=\bfzero
\quad
\text{and}
\quad
\abs{c_j}=1,\ j=1,\dotsc,n.
\end{equation}
\end{theorem}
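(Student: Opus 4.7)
The plan is to pass from $\set{\bfphi_j}_{j\in\calK}$ to its would-be Naimark complement. Under the normalization $\norm{\bfphi_j}^2=s$, equality in the Welch bound forces $\abs{\ip{\bfphi_j}{\bfphi_{j'}}}=1$ for every $j\ne j'$. If $\set{\bfphi_j}_{j\in\calK}$ is a regular $s$-simplex then it is a tight frame for its $s$-dimensional span with frame constant $a=\tfrac{(s+1)s}{s}=s+1$; its Naimark complement therefore lives in $\bbF$ and consists of unimodular scalars $\set{c_j}_{j\in\calK}$ whose scalar-valued Gram entries satisfy $\overline{c_j}c_{j'}=(s+1)\delta_{jj'}-\ip{\bfphi_j}{\bfphi_{j'}}$. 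In particular, $\ip{\bfphi_j}{\bfphi_{j'}}=-\overline{c_j}c_{j'}$ for $j\ne j'$, and taking the triple product over any pairwise-distinct $j_1,j_2,j_3\in\calK$ immediately produces~\eqref{equation.triple product condition}.

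For the converse, assume every such triple product equals $-1$ and abbreviate $\bfG_\calK:=\bfPhi_\calK^*\bfPhi_\calK^{}$. Since its off-diagonal entries have unit modulus, $\bfG_\calK(j,j')\bfG_\calK(j',j)=1$, so~\eqref{equation.triple product condition} rewrites as $\bfG_\calK(j,k)\bfG_\calK(k,j')=-\bfG_\calK(j,j')$ for all pairwise-distinct $j,k,j'\in\calK$. An entrywise calculation then yields $\bfG_\calK^2=(s+1)\bfG_\calK$: for $j\ne j'$ the $(j,j')$-entry of $\bfG_\calK^2$ is $2s\bfG_\calK(j,j')-(s-1)\bfG_\calK(j,j')=(s+1)\bfG_\calK(j,j')$, while the $(j,j)$-entry is $s^2+s=s(s+1)$. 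The eigenvalues of $\bfG_\calK$ then lie in $\set{0,s+1}$, and the trace $s(s+1)$ forces the rank to equal $s$; Lemma~\ref{lemma:tight frame for its span} then identifies $\set{\bfphi_j}_{j\in\calK}$ as a tight frame for its $s$-dimensional span, that is, a regular $s$-simplex.

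For the characterization of Naimark complements, any valid $\set{c_j}$ must satisfy $\abs{c_j}^2=(s+1)-\norm{\bfphi_j}^2=1$, and by the cross-frame identity~\eqref{equation.cross frame operator of Naimark complements} must also have $\bfPhi_\calK\bfPsi_\calK^*=\bfzero$, which unwinds to $\sum_{j\in\calK}\overline{c_j}\bfphi_j=\bfzero$. Conversely, if $\set{c_j}$ satisfies~\eqref{equation.regular simplex Naimark complement}, then $(\overline{c_j})_{j\in\calK}$ lies in $\ker\bfPhi_\calK$; since the simplex has $s$-dimensional span this kernel is one-dimensional, so $\set{c_j}$ must differ from any fixed Naimark complement by a global unimodular scalar and therefore shares its Gram matrix. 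The main step requiring care is the entrywise identity $\bfG_\calK^2=(s+1)\bfG_\calK$ in the converse of the first claim; once this is in hand, everything else is bookkeeping with the normalization and the standard Naimark-complement machinery developed in Section~2.
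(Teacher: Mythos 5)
Your proposal is correct. The forward implication and the final characterization of Naimark complements follow the paper's argument essentially verbatim: the simplex's tight frame constant is $s+1$, its Naimark complement is a sequence of unimodular scalars with $\ip{\bfphi_j}{\bfphi_{j'}}=-\overline{c_j}c_{j'}$, the relation $\sum_{j\in\calK}\overline{c_j}\bfphi_j=\bfzero$ comes from~\eqref{equation.cross frame operator of Naimark complements}, and uniqueness up to a unimodular scalar comes from the one-dimensionality of $\ker\bfPhi_\calK$. Where you genuinely diverge is the converse of the triple-product equivalence: the paper fixes $j_3\in\calK$, explicitly defines the rank-one candidate $\hat{c}_{j_3}=1$, $\hat{c}_j=-\ip{\bfphi_{j_3}}{\bfphi_j}$, and verifies entrywise that $\bfPhi_\calK^*\bfPhi_\calK^{}=(s+1)\bfI-\hat{\bfC}^*\hat{\bfC}$, so the subset is a Naimark complement of an ETF for $\bbF^1$; you instead rewrite~\eqref{equation.triple product condition} as $\bfG_\calK(j,k)\bfG_\calK(k,j')=-\bfG_\calK(j,j')$, compute $\bfG_\calK^2=(s+1)\bfG_\calK$ directly, and then get the span dimension from Lemma~\ref{lemma:tight frame for its span} together with the trace/rank count. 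Both routes are sound; the paper's has the side benefit of producing an explicit complement $\set{\hat{c}_j}$ that it reuses in the final uniqueness step (you instead invoke the general existence of Naimark complements, which Section~2.2 supplies), while yours is a clean spectral verification that avoids choosing a base point $j_3$. One cosmetic remark: when you conclude ``tight frame for its $s$-dimensional span, that is, a regular $s$-simplex,'' you should add the one-line observation that equiangularity is inherited from the ambient ETF, since a regular simplex is by definition an ETF (not merely a tight frame) for its span; this is immediate but worth stating.
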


\begin{proof}
Note this scaling ensures that $\abs{\ip{\bfphi_j}{\bfphi_{j'}}}=\mu\norm{\bfphi_j}\norm{\bfphi_{j'}}=1$ for any $j\neq j'$.
Fix any subset $\calK$ of $[n]$ of cardinality $s+1$.

($\Rightarrow$)
Assume \smash{$\set{\bfphi_j}_{j\in\calK}$} is a regular simplex for $\bbH$,
namely an ETF for an $s$-dimensional subspace $\bbH_0$ of $\bbH$.
By~\eqref{equation.dimension of ETF span},
its tight frame constant is necessarily \smash{$\frac{s+1}{\dim(\bbH_0)}\norm{\bfphi_j}^2=\frac{s+1}{s}s=s+1$}.
As such, taking a Naimark complement of $\set{\bfphi_j}_{j\in\calK}$ yields a sequence $\set{c_j}_{j\in\calK}$ of scalars whose $1\times(s+1)$ synthesis operator $\bfC$ satisfies $\bfC^*\bfC=(s+1)\bfI-\bfPhi_{\calK}^*\bfPhi_{\calK}^{}$.
This means $\abs{c_j}^2=(s+1)-\norm{\bfphi_j}^2=1$ for all $j\in\calK$ and $\ip{\bfphi_j}{\bfphi_{j'}}=-\overline{c_j}c_{j'}$ for all $j,j'\in\calK$, $j\neq j'$.
Thus, for all pairwise-distinct $j_1,j_2,j_3\in\calK$,
\begin{equation*}
\ip{\bfphi_{j_1}}{\bfphi_{j_2}}\ip{\bfphi_{j_2}}{\bfphi_{j_3}}\ip{\bfphi_{j_3}}{\bfphi_{j_1}}
=(-\overline{c_{j_1}}c_{j_2})(-\overline{c_{j_2}}c_{j_3})(-\overline{c_{j_3}}c_{j_1})
=-\abs{c_{j_1}}^2\abs{c_{j_2}}^2\abs{c_{j_3}}^2
=-1.
\end{equation*}
Note that in this case~\eqref{equation.cross frame operator of Naimark complements} also gives
\smash{$\sum_{j=1}^{s+1}\overline{c_j}\bfphi_j
=\bfPhi\bfC^*
=\bfzero$}, and so any Naimark complement of any regular simplex \smash{$\set{\bfphi_j}_{j\in\calK}$} indeed satisfies
\eqref{equation.regular simplex Naimark complement}.

($\Leftarrow$)
Assume \eqref{equation.triple product condition} holds for all pairwise-distinct $j_1,j_2,j_3\in\calK$.
Fix any $j_3\in\calK$, and let $\hat{\bfC}$ be the $1\times(s+1)$ synthesis operator of the sequence \smash{$\set{\hat{c}_j}_{j\in\calK}$} of scalars defined by $\hat{c}_{j_3}:=1$ and $\hat{c}_j:=-\ip{\bfphi_{j_3}}{\bfphi_j}$ for all $j\in\calK$, $j\neq j_3$.
Being unimodular, \smash{$\set{\hat{c}_j}_{j\in\calK}$} is an ETF for $\bbF^1$ with tight frame constant $s+1$.
We claim that
\begin{equation}
\label{equation.proof of triple products 1}
\bfPhi_{\calK}^*\bfPhi_{\calK}^{}=(s+1)\bfI-\hat{\bfC}^*\hat{\bfC}.
\end{equation}
Indeed, these two matrices have equal diagonal entries since
$\norm{\bfphi_j}^2=s=(s+1)-\abs{\hat{c}_j}^2$ for any $j\in\calK$.
Meanwhile, for any $j_1,j_2\in\calK$, $j_1\neq j_2$,
our assumption~\eqref{equation.triple product condition} gives
\begin{equation*}
\overline{\hat{c}_{j_1}}\hat{c}_{j_2}
=\left\{\begin{array}{cc}
-\ip{\bfphi_{j_3}}{\bfphi_{j_2}},&j_1=j_3\medskip\\
\overline{-\ip{\bfphi_{j_3}}{\bfphi_{j_1}}},&j_2=j_3\medskip\\
(-1)^2\overline{\ip{\bfphi_{j_3}}{\bfphi_{j_1}}}\ip{\bfphi_{j_3}}{\bfphi_{j_2}},&\text{else}
\end{array}\right\}
=-\ip{\bfphi_{j_1}}{\bfphi_{j_2}}.
\end{equation*}
Having \eqref{equation.proof of triple products 1}, we see that \smash{$\set{\bfphi_j}_{j\in\calK}$} is a Naimark complement for \smash{$\set{\hat{c}_j}_{j\in\calK}$}, and so is an $(s+1)$-vector ETF for a space of dimension $s$, namely a regular simplex.

Again assuming $\set{\bfphi_j}_{j\in\calK}$ is a regular simplex,
all that remains to be shown is that any sequence of scalars $\set{c_j}_{j\in\calK}$ that satisfies~\eqref{equation.regular simplex Naimark complement} is a Naimark complement of it.
From above, we know the aforementioned sequence
\smash{$\set{\hat{c}_j}_{j\in\calK}$} is a Naimark complement for $\set{\bfphi_j}_{j\in\calK}$ and so satisfies~\eqref{equation.regular simplex Naimark complement}.
If $\set{c_j}_{j\in\calK}$ also satisfies~\eqref{equation.regular simplex Naimark complement},
then both sequences lie in the null space of the synthesis operator $\bfPhi_\calK$ of the regular simplex.
Since this space has dimension $(s+1)-\rank\set{\bfphi_j}_{j\in\calK}=(s+1)-s=1$, this implies that $\set{\hat{c}_j}_{j\in\calK}$ and $\set{c_j}_{j\in\calK}$ are scalar multiples of each other.
Since both sequences are unimodular, there in particular exists a unimodular scalar $z$ such that $c_j=z\hat{c}_j$ for all $j\in\calK$.
Thus, $\bfC^*\bfC=\hat{\bfC}^*\hat{\bfC}=(s+1)\bfI-\bfPhi_{\calK}^*\bfPhi_{\calK}^{}$,
meaning \smash{$\set{c_j}_{j\in\calK}$} is a Naimark complement for $\set{\bfphi_j}_{j\in\calK}$.
\end{proof}

We have the following immediate corollary for ETFs $\set{\bfphi_j}_{j=1}^{n}$ with the property that $\ip{\bfphi_j}{\bfphi_{j'}}$ is a root of unity for all $j\neq j'$, such as those considered in~\cite{BodmannPT09,BodmannE10,CoutinkhoGSZ16,FickusJMP18,FickusJMPW19}:

\begin{corollary}
\label{corollary.empty triples}
If $\set{\bfphi_j}_{j=1}^{n}$ is an ETF and $\ip{\bfphi_j}{\bfphi_{j'}}$ is an odd root of unity for all $j\neq j'$, then this ETF contains no regular simplices.
\end{corollary}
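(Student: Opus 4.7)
The plan is to apply Theorem~\ref{theorem.triple products} and derive a contradiction via a parity mismatch. First, I would check that the hypothesis already places us in the normalization required by the theorem: the assumption that every off-diagonal Gram entry $\ip{\bfphi_j}{\bfphi_{j'}}$ is an odd root of unity forces $\abs{\ip{\bfphi_j}{\bfphi_{j'}}}=1$, and combining this with equiangularity $\abs{\ip{\bfphi_j}{\bfphi_{j'}}}=\mu\norm{\bfphi_j}\norm{\bfphi_{j'}}$ together with $\mu=\tfrac1s$ immediately yields $\norm{\bfphi_j}^2=s$ for each $j$, where $s$ is the inverse Welch bound~\eqref{equation.inverse Welch bound}.

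Next, I would argue by contradiction. Assume that some $(s+1)$-element $\calK\subseteq[n]$ indexes a regular simplex contained in the ETF; this automatically forces $s\geq 2$, since $s=1$ corresponds only to the degenerate one-dimensional setting. By Theorem~\ref{theorem.triple products} we then have
\begin{equation*}
\ip{\bfphi_{j_1}}{\bfphi_{j_2}}\ip{\bfphi_{j_2}}{\bfphi_{j_3}}\ip{\bfphi_{j_3}}{\bfphi_{j_1}}=-1
\end{equation*}
for every triple of pairwise-distinct $j_1,j_2,j_3\in\calK$, and each factor on the left is an odd root of unity by hypothesis.

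The final ingredient is the elementary observation that the set of odd roots of unity is closed under multiplication: if $z_i^{m_i}=1$ with each $m_i$ odd, then $(z_1z_2z_3)^{\mathrm{lcm}(m_1,m_2,m_3)}=1$, and an LCM of odd integers is odd. Hence the left-hand side above is itself an odd root of unity, while the right-hand side $-1$ is a primitive second root of unity. Since $2$ is even, this is the desired contradiction, and no such $\calK$ can exist.

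There is effectively no serious obstacle here: after invoking Theorem~\ref{theorem.triple products}, the entire argument reduces to the observation that $-1$ does not lie in the multiplicative submonoid of the circle group generated by odd roots of unity. The only point requiring a moment's care is the normalization check, but this follows at once from equiangularity combined with the unimodularity of the off-diagonal Gram entries.
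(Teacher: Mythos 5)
Your argument is exactly the one the paper intends: the corollary is stated there as an immediate consequence of Theorem~\ref{theorem.triple products}, the point being precisely that a product of three odd roots of unity is again an odd root of unity and so can never equal $-1$, ruling out condition~\eqref{equation.triple product condition} for every triple. Your normalization check (unimodular off-diagonal Gram entries plus equiangularity force $\norm{\bfphi_j}^2=s$) and the parity argument are both correct, so your proposal matches the paper's (implicit) proof.
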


For an example of Theorem~\ref{theorem.triple products} in practice,
a direct computation of all \smash{$\binom{10}{3}=120$} triple products of the ETF~\eqref{equation.5x10 ETF} reveals that half of them satisfy~\eqref{equation.triple product condition}, namely:
\begin{align}
\nonumber
&\{1,2,6\},\{1,2,7\},\{1,2,8\},\{1,2,9\},\{1,3,5\},\{1,3,7\},\{1,3,8\},\{1,3,10\},\{1,4,5\},\{1,4,6\},\\
\nonumber
&\{1,4,9\},\{1,4,10\},\{1,5,8\},\{1,5,9\},\{1,6,8\},\{1,6,10\},\{1,7,9\},\{1,7,10\},\{2,3,5\},\{2,3,6\},\\
\label{equation.5x10 triples}
&\{2,3,9\},\{2,3,10\},\{2,4,5\},\{2,4,7\},\{2,4,8\},\{2,4,10\},\{2,5,6\},\{2,5,7\},\{2,6,8\},\{2,7,9\},\\
\nonumber
&\{2,8,10\},\{2,9,10\},\{3,4,6\},\{3,4,7\},\{3,4,8\},\{3,4,9\},\{3,5,6\},\{3,5,8\},\{3,6,7\},\{3,7,10\},\\
\nonumber
&\{3,8,9\},\{3,9,10\},\{4,5,7\},\{4,5,9\},\{4,6,7\},\{4,6,10\},\{4,8,9\},\{4,8,10\},\{5,6,9\},\{5,6,10\},\\
\nonumber
&\{5,7,8\},\{5,7,10\},\{5,8,10\},\{5,9,10\},\{6,7,8\},\{6,7,9\},\{6,8,9\},\{6,9,10\},\{7,8,9\},\{7,8,10\}.
\end{align}
Here, Theorem~\ref{theorem.triple products} says that a $4$-element subset of $\set{1,\dotsc,10}$ corresponds to a regular simplex if and only if every one of its \smash{$\binom43=4$} subsets of cardinality $3$ is one of these triples.
For example, $\set{\bfphi_1,\bfphi_2,\bfphi_6,\bfphi_8}$ is a regular simplex since $\set{1,2,6}$, $\set{1,2,8}$, $\set{1,6,8}$ and $\set{2,6,8}$ all satisfy \eqref{equation.triple product condition}.
Finding all $4$-element subsets of $\set{1,\dotsc,10}$ with this property gives the binder~\eqref{equation.5x10 binder}.

As seen from this example, Theorem~\ref{theorem.triple products} suggests the following iterative meta-algorithm for computing the binder of an ETF $\set{\bfphi_j}_{j=1}^{n}$:
first, exhaustively compute all triples of indices that satisfy~\eqref{equation.triple product condition},
then find all $4$-tuples that consist of $4$ of these triples,
then find all $5$-tuples that consist of $5$ of these $4$-tuples, etc.,
until finally finding all $(s+1)$-tuples that consist of $s+1$ of the $s$-tuples computed in the previous step.
In so doing, we compute those $(s+1)$-tuples with the property that every $3$-element subset of them satisfies~\eqref{equation.triple product condition}, namely the binder.
In Table~\ref{table.binderfinder},
we give a particular instance of this type of algorithm that we call \textit{BinderFinder}.

Essentially, BinderFinder exploits the idea that every $(t+1)$-tuple we want is of the form $\calY=\calX\cup\set{j}$ where $\calX$ is in our list of $t$-tuples, $j\notin\calX$, and $\calX\cup\set{j}\backslash\set{j'}$ is also in our list of $t$-tuples for any $j'\in\calX$.
Such $j$ are characterized by the property that they lie in exactly $t$ of the $t$-tuples in our list that intersect $\calX$ in exactly $t-1$ indices.
For the ETF given in~\eqref{equation.5x10 ETF} for example, we want $4$-tuples consisting of $4$ triples from the list~\eqref{equation.5x10 triples}.
For $4$-tuples that begin with $\set{1,2,6}$ in particular,
we find the other triples that overlap it in two places, namely
$\{1,2,7\}$,
$\{1,2,8\}$,
$\{1,2,9\}$,
$\{1,4,6\}$,
$\{1,6,8\}$,
$\{1,6,10\}$,
$\{2,3,6\}$,
$\{2,5,6\}$ and $\{2,6,8\}$.
Since $8$ is the only index that appears $3$ times in this list and is not in $\set{1,2,6}$,
the only such $4$-tuple is $\set{1,2,6,8}$.
A technical note: by ordering the $t$-tuples and requiring that $i'>i$ in the definition of $\calJ_i$ given in Table~\ref{table.binderfinder}, we ensure the same $(t+1)$-tuple isn't found multiple times.
For example, we don't discover $\set{1,2,6,8}$ a second time when starting with $\set{1,2,8}$.

\begin{table}
\fbox{\parbox{\textwidth}{\begin{itemize}
\item
Via three nested for-loops, compute an initial list of $3$-element subsets of $[n]$:\\
for all $j_1,j_2,j_3=1,\dotsc,n$ with $j_1<j_2<j_3$,
include $\set{j_1,j_2,j_3}$ in this list if~\eqref{equation.triple product condition} holds.
\item
For each $t=3,\dotsc,s$, compute a list of $(t+1)$-element subsets of $[n]$ from a list $\set{\calX_i}_{i=1}^{u_t}$\\
of $t$-element subsets of $[n]$ according to the following process:
for each $i=1,\dotsc,u_t$,
\begin{itemize}
\renewcommand\labelitemii{$\circ$}
\item
find those subsets $\set{\calX_{i'}}_{i'=i+1}^{u_t}$ that overlap with $\calX_i$ in $t-1$ indices, namely
\begin{equation*}
\calJ_i:=\set{i': \#(\calX_i\cap\calX_{i'})=t-1,\ i'>i};
\end{equation*}
\item
find the indices contained in exactly $t$ of those subsets but not in $\calX_i$, namely
\begin{equation*}
\calK_i:=\set{j\in[n]: j\notin\calX_i,\ \sum_{i'\in\calJ_i}\bfone_{\calX_i}(j)=t};
\end{equation*}
\item
for all $j\in\calK_i$, append $\calX_i\cup\set{j}$ to the current list of $(t+1)$-element subsets of $[n]$.
\end{itemize}
\end{itemize}}}
\caption{\label{table.binderfinder}The BinderFinder algorithm for computing the binder of an ETF $\set{\bfphi_j}_{j=1}^{n}$,
namely the set of all subsets of $[n]$ that correspond to a regular simplex that  $\set{\bfphi_j}_{j=1}^{n}$ contains.}
\end{table}

Apart from when $t=3$, BinderFinder never explicitly searches through all $t$-element subsets of $[n]$.
This is important, since \smash{$\binom{n}{t}$} is prohibitively large for all but the smallest values of $n$ and $t$.
In fact, BinderFinder only requires $\rmO(n^3)$ operations when the ETF has no triples that satisfy~\eqref{equation.triple product condition} and thus has an empty binder,
cf.\ Corollary~\ref{corollary.empty triples}.
That said, apart from this and some other cases with empty binders,
BinderFinder is combinatorially complex:
directly computing $\calJ_i$ for all $t$ and $i$ involves \smash{$\sum_{t=3}^{s}\binom{u_t}{2}$} operations,
and if $\set{\bfphi_j}_{j=1}^{n}$ contains even a single regular simplex then \smash{$u_t\geq\binom{s+1}{t}$}.
Here, Vandermonde's identity and Stirling's approximation give
\begin{equation*}
\sum_{t=0}^{s+1}\tbinom{s+1}{t}^2
=\tbinom{2(s+1)}{s+1}
\sim\tfrac{4^{s+1}}{\sqrt{\pi(s+1)}}.
\end{equation*}
Because of this, we in practice use BinderFinder when $n$ and $d$ are small, but still too large to permit a direct search of all $(s+1)$-element subsets of $[n]$.

We also note that even a partial application of BinderFinder can reveal that two ETFs with the same $n$ and $d$ parameters are not equivalent.
To elaborate, if \smash{$\set{j_1,j_2,j_3}$} satisfies~\eqref{equation.triple product condition} for a given ETF,
then~\eqref{equation.triple product invariance} implies that it also satisfies~\eqref{equation.triple product condition} for all ETFs that are projective-unitarily equivalent to it.
Meanwhile, permuting an ETF's indices permutes such a triple accordingly.
As such, if two ETFs are equivalent, then for any $t=3,\dotsc,s+1$,
the list of $t$-tuples produced by BinderFinder for one ETF is related to the corresponding list for the other ETF via a permutation.
In particular,
if two ETFs are equivalent, then they have the same number $u_t$ of such $t$-tuples for all $t=3,\dotsc,s+1$.
For example, if any $10$-vector ETF for $\bbR^5$ does not contain exactly $15$ regular simplices and $60$ $3$-tuples satisfying~\eqref{equation.triple product condition}, then it is not equivalent to~\eqref{equation.5x10 ETF}.

\section{Constructing Naimark complements from binders with combinatorial structure}

We now discuss a surprising application of an ETF's binder:
in some cases, it gives a remarkably simple representation of its Naimark complement.
As we shall see, these ideas give a nontrivial generalization of the recently introduced \textit{phased BIBD ETFs} of~\cite{FickusJMPW19}.
The key idea is to realize that in some cases, the binder of an ETF forms a \textit{balanced incomplete block design} (BIBD).

To be precise, for positive integers $v>k\geq2$ and $\lambda$,
a $\BIBD(v,k,\lambda)$ is a $v$-element vertex set $\calV$ along with a collection $\calB$ of subsets of $\calV$ called \textit{blocks} such that every block contains exactly $k$ vertices, and every pair of distinct vertices is contained in exactly $\lambda$ blocks.
Letting $b$ denote the number of blocks,
this means its $b\times v$ incidence matrix $\bfX$ satisfies $\bfX\bfone=k\bfone$ and that the off-diagonal entries of $\bfX^\rmT\bfX$ are all $\lambda$.
As such, for any $j=1,\dotsc,v$, the number $r_j$ of blocks that contains the $j$th vertex satisfies
\begin{equation*}
(v-1)\lambda
=\sum_{\substack{j'=1\\j'\neq j}}^v(\bfX^\rmT\bfX)(j,j')
=\sum_{i=1}^{b}\bfX(i,j)\sum_{\substack{j'=1\\j'\neq j}}^v\bfX(i,j')
=\sum_{i=1}^{b}\left\{\begin{array}{cl}k-1,&\bfX(i,j)=1\\0,&\bfX(i,j)=0\end{array}\right\}
=r_j(k-1).
\end{equation*}
That is, this number $r_j=r:=\lambda\frac{v-1}{k-1}$ is independent of $j$.
At this point, summing all entries of $\bfX$ gives $bk=vr$ and so \smash{$b=\lambda\frac{v(v-1)}{k(k-1)}$} is also uniquely determined by $v$, $k$ and $\lambda$.

The binder of an ETF $\set{\bfphi_j}_{j=1}^{n}$ for a $d$-dimensional Hilbert space $\bbH$ is by definition a collection of subsets of $\calV=[n]$ of size $s+1$ where $s$ is the inverse Welch bound, namely~\eqref{equation.inverse Welch bound}.
As such, its binder is a $\BIBD(n,s+1,\lambda)$ precisely when any pair of frame vectors is contained in exactly $\lambda$ of the regular simplices it contains.
For example, the binder~\eqref{equation.5x10 binder} of the ETF $\set{\bfphi_j}_{j=1}^{10}$
from~\eqref{equation.5x10 ETF} happens to be a $\BIBD(10,4,2)$ since every pair $\set{\bfphi_j,\bfphi_{j'}}$ of distinct vectors appears in $2$ regular simplices.
Note this implies each single $\bfphi_j$ is contained in \smash{$r=\lambda\frac{v-1}{k-1}=2\frac{9}{3}=6$} regular simplices,
and that there are \smash{$b=\frac{v}{k}r=\frac{10}4 6=15$} regular simplices overall.

As we now explain,
something remarkable happens when the binder of an ETF is a BIBD:
the incidence matrix of this BIBD can be \textit{phased} so as to produce a very sparse Naimark complement of it.
For instance, each one of the $15$ regular simplices $\set{\bfphi_j}_{j\in\calK}$ in~\eqref{equation.5x10 ETF} has a Naimark complement which,
when appropriately scaled,
is a sequence $\set{c_j}_{j\in\calK}$ of unimodular scalars such that $\sum_{j\in\calK}\overline{c_j}\bfphi_j=\bfzero$.
Consider the $15\times 10$ matrix $\bfPsi$ whose every row indicates such a sequence:
\begin{equation}
\label{equation.5x10 phased binder}
\bfPsi=\tfrac1{\sqrt{2}}\left[\begin{array}{rrrrrrrrrr}
1&-1& 0& 0& 0&-1& 0& 1& 0&\phantom{-}0\\
1&-1& 0& 0& 0& 0&-1& 0& 1& 0\\
1& 0&-1& 0&-1& 0& 0& 1& 0& 0\\
1& 0&-1& 0& 0& 0&-1& 0& 0& 1\\
1& 0& 0&-1&-1& 0& 0& 0& 1& 0\\
1& 0& 0&-1& 0&-1& 0& 0& 0& 1\\
0& 1&-1& 0&-1& 1& 0& 0& 0& 0\\
0& 1&-1& 0& 0& 0& 0& 0&-1& 1\\
0& 1& 0&-1&-1& 0& 1& 0& 0& 0\\
0& 1& 0&-1& 0& 0& 0&-1& 0& 1\\
0& 0& 1&-1& 0&-1& 1& 0& 0& 0\\
0& 0& 1&-1& 0& 0& 0&-1& 1& 0\\
0& 0& 0& 0& 1&-1& 0& 0&-1& 1\\
0& 0& 0& 0& 1& 0&-1&-1& 0& 1\\
0& 0& 0& 0& 0& 1&-1&-1& 1& 0
\end{array}\right].
\end{equation}
One can directly verify that $\bfPhi^*\bfPhi+\bfPsi^*\bfPsi=12\bfI$,
meaning the columns \smash{$\set{\bfpsi_j}_{j=1}^{10}$} of $\bfPsi$ form a Naimark complement for \smash{$\set{\bfphi_j}_{j=1}^{10}$}.
As such, $\set{\bfpsi_j}_{j=1}^{10}$ is an ETF for its span which has dimension $n-d=10-5=5$.
We now prove that this holds in general.
Here, it turns out that we only need a subset of the binder to be a BIBD:

\begin{theorem}
\label{theorem.phased binder}
Let $\set{\bfphi_j}_{j=1}^{n}$ be an ETF for a $d$-dimensional Hilbert space,
and without loss of generality assume \smash{$\norm{\bfphi_j}^2=s$} for all $j$ where $s$ is the inverse Welch bound~\eqref{equation.inverse Welch bound}.
For any $i=1,\dotsc,b$, let $\set{\bfphi_j}_{j\in\calK_i}$ be a regular simplex and take scalars $\set{c_{i,j}}_{j\in\calK_i}$ such that~\eqref{equation.regular simplex Naimark complement} holds.
Then these scalars satisfy
\begin{equation}
\label{equation.consistency of relative phase}
\overline{c_{i,j}}c_{i,j'}=\overline{c_{i',j}}c_{i',j'},\quad
\forall\,i\neq i',\, j\neq j'
\text{ such that }
j,j'\in\calK_i\cap\calK_{i'}.
\end{equation}
Moreover, if $\set{\calK_i}_{i=1}^b$ is a $\BIBD(n,s+1,\lambda)$ for some $\lambda>0$,
then the columns $\set{\bfpsi_j}_{j=1}^{n}$ of
\begin{equation}
\label{equation.phased incidence matrix}
\bfPsi\in\bbF^{b\times n},
\quad
\bfPsi(i,j):=\tfrac1{\sqrt{\lambda}}\left\{\begin{array}{cl}c_{i,j},&j\in\calK_i,\\0,&j\notin\calK_i,\end{array}\right.
\end{equation}
form a Naimark complement of $\set{\bfphi_j}_{j=1}^{n}$.
\end{theorem}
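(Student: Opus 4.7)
The plan is to leverage Theorem~\ref{theorem.triple products} to reduce both claims to routine bookkeeping with the BIBD parameters. Recall that in the forward direction of that theorem's proof, it was shown that whenever $\set{\bfphi_j}_{j\in\calK_i}$ is a regular simplex and $\set{c_{i,j}}_{j\in\calK_i}$ are scalars satisfying~\eqref{equation.regular simplex Naimark complement}, the Gram identity
\begin{equation*}
\ip{\bfphi_j}{\bfphi_{j'}}=-\overline{c_{i,j}}c_{i,j'},\quad j,j'\in\calK_i,\ j\neq j'
\end{equation*}
holds. This is the one key ingredient I would cite. For the first claim~\eqref{equation.consistency of relative phase}, I would simply apply this identity to both $i$ and $i'$: whenever $j,j'\in\calK_i\cap\calK_{i'}$ with $j\neq j'$, both $\overline{c_{i,j}}c_{i,j'}$ and $\overline{c_{i',j}}c_{i',j'}$ equal $-\ip{\bfphi_j}{\bfphi_{j'}}$, and therefore coincide.

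For the second claim, I would verify directly that the matrix $\bfPsi$ defined by~\eqref{equation.phased incidence matrix} satisfies $\bfPsi^*\bfPsi=a\bfI-\bfPhi^*\bfPhi$, which by definition of Naimark complement is what is required. The tight frame constant is $a=ns/d$ by~\eqref{equation.dimension of ETF span}; rearranging $s^2=\tfrac{d(n-1)}{n-d}$ from~\eqref{equation.inverse Welch bound} gives $d=\tfrac{s^2 n}{n+s^2-1}$ and hence $a=s+\tfrac{n-1}{s}$. On the diagonal, each $j\in[n]$ belongs to exactly $r=\lambda\tfrac{n-1}{s}$ of the blocks $\calK_i$ (the BIBD replication number, computed with $v=n$ and $k=s+1$), so
\begin{equation*}
(\bfPsi^*\bfPsi)(j,j)=\tfrac{1}{\lambda}\sum_{i:\,j\in\calK_i}\abs{c_{i,j}}^2=\tfrac{r}{\lambda}=\tfrac{n-1}{s}=a-s=a-\norm{\bfphi_j}^2,
\end{equation*}
as required. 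Off the diagonal, for any $j\neq j'$, the BIBD property gives exactly $\lambda$ indices $i$ with $\set{j,j'}\subseteq\calK_i$, and the identity cited above makes each of the corresponding summands $\overline{c_{i,j}}c_{i,j'}$ equal to $-\ip{\bfphi_j}{\bfphi_{j'}}$. Summing and dividing by the $1/\lambda$ normalization yields $(\bfPsi^*\bfPsi)(j,j')=-\ip{\bfphi_j}{\bfphi_{j'}}=-(\bfPhi^*\bfPhi)(j,j')$, completing the verification.

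There is no real obstacle here; the conceptual content was already absorbed by Theorem~\ref{theorem.triple products}. The only thing that needs care is bookkeeping: one must identify the correct tight-frame constant $a=s+\tfrac{n-1}{s}$ from the chosen normalization $\norm{\bfphi_j}^2=s$, and one must recognize that the normalization $1/\sqrt{\lambda}$ built into~\eqref{equation.phased incidence matrix} is precisely what is needed so that the $\lambda$ equal off-diagonal contributions (guaranteed equal by the first part of the theorem) collapse to a single copy of $-\ip{\bfphi_j}{\bfphi_{j'}}$, and so that the diagonal contribution $r/\lambda$ matches $a-s$.
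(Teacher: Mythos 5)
Your proposal is correct and follows essentially the same route as the paper's proof: both reduce the first claim to the identity $\overline{c_{i,j}}c_{i,j'}=-\ip{\bfphi_j}{\bfphi_{j'}}$ coming from Theorem~\ref{theorem.triple products}, and both verify $\bfPhi^*\bfPhi+\bfPsi^*\bfPsi=a\bfI$ by the same BIBD counting ($\lambda$ blocks per pair off the diagonal, $r=\lambda\frac{n-1}{s}$ blocks per point on the diagonal) together with the computation $s+\frac{n-1}{s}=\frac{ns}{d}=a$. No gaps.
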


\begin{proof}
For any $i=1,\dotsc,b$,
the fact that $\set{c_{i,j}}_{j\in\calK_i}$ satisfies~\eqref{equation.regular simplex Naimark complement} means that it is a Naimark complement for $\set{\bfphi_j}_{j\in\calK_i}$,
and so $\overline{c_{i,j}}c_{i,j'}=-\ip{\bfphi_j}{\bfphi_{j'}}$ for all $j,j'\in\calK_i$ such that $j\neq j'$.
In particular, \eqref{equation.consistency of relative phase} holds.

Continuing, if $\set{\calK_i}_{i=1}^b$ is a $\BIBD(n,s+1,\lambda)$ then for any $j\neq j'$, there are exactly $\lambda$ values of $i$ such that both $\bfPsi(i,j)$ and $\bfPsi(i,j')$ are nonzero, namely those $i$ such that $j,j'\in\calK_i$.
As such,
\begin{equation*}
(\bfPhi^*\bfPhi)(j,j')
=\sum_{i=1}^{b}\overline{\bfPhi(i,j)}\bfPhi(i,j')
=\sum_{\set{i: j,j'\in\calK_i}}\tfrac1{\lambda}\,\overline{c_{i,j}}c_{i,j'}
=-\tfrac1{\lambda}\sum_{\set{i: j,j'\in\calK_i}}\ip{\bfphi_j}{\bfphi_{j'}}
=-\ip{\bfphi_j}{\bfphi_{j'}}
\end{equation*}
for any $j\neq j'$.
Moreover, if \smash{$\set{\calK_i}_{i=1}^b$} is a $\BIBD(n,s+1,\lambda)$, each column of $\bfPsi$ has exactly \smash{$r=\lambda\frac{v-1}{k-1}=\lambda\frac{n-1}{s}$} nonzero entries,
each of modulus \smash{$\lambda^{-\frac12}$},
implying $(\bfPsi^*\bfPsi)(j,j)=\frac{n-1}{s}$ for all $j=1,\dotsc,n$.
In particular, since \smash{$s^2=\tfrac{d(n-1)}{n-d}$}, every diagonal entry of $\bfPhi^*\bfPhi+\bfPsi^*\bfPsi$ has value
\begin{equation*}
s+\tfrac{n-1}s
=s(1+\tfrac{n-1}{s^2})
=s(1+\tfrac{n-d}{d})
=\tfrac{ns}d,
\end{equation*}
which happens to equal the tight frame constant $a$ of $\set{\bfphi_j}_{j=1}^{n}$.
As such, $\bfPhi^*\bfPhi+\bfPsi^*\bfPsi=a\bfI$,
meaning the columns of $\bfPsi$ form a Naimark complement for $\set{\bfphi_j}_{j=1}^{n}$.
\end{proof}

Essentially what this result says is that when the binder of an ETF---or a subset of it---forms a BIBD, we can piece together Naimark complements of its regular simplices in order to form a Naimark complement for the entire ETF.

As we now explain, the ETFs that arise from Theorem~\ref{theorem.phased binder} are a generalization of the \textit{phased BIBD ETFs} of~\cite{FickusJMPW19}.
There, it is observed that if $\bfX$ is the $\set{0,1}$-valued $b\times n$ incidence matrix of a $\BIBD(n,k,1)$,
and if $\bfPsi$ is any matrix obtained by \textit{phasing} $\bfX$,
that is, multiplying its entries by unimodular scalars,
then the columns $\set{\bfpsi_j}_{j=1}^{n}$ of $\bfPsi$ are automatically equiangular,
since for any $j\neq j'$, $\ip{\bfpsi_j}{\bfpsi_{j'}}$ is simply the product of two unimodular numbers.
The challenge then becomes to find a way to phase $\bfX$ so that $\set{\bfpsi_j}_{j=1}^{n}$ is an ETF for its span, namely so that $\bfPsi\bfPsi^*\bfPsi=a\bfPsi$ for some $a>0$.
By Theorem~3.4 of~\cite{FickusJMPW19}, this span is necessarily of dimension
\begin{equation}
\label{equation.dimension of phased BIBD ETF}
\tfrac{n(n-1)}{(n-1)+(k-1)^2}.
\end{equation}
Two nontrivial infinite families of such ETFs are constructed in~\cite{FickusJMPW19}:
for any prime power $q$, one can phase a $\BIBD(q^2,q,1)$ and a $\BIBD(q^3+1,q+1,1)$ to produce ETFs for spaces of dimension \smash{$\binom{q+1}{2}$} and $q(q^2-q+1)$, respectively.
For example, when $q=3$, the columns $\set{\bfpsi_j}_{j=1}^{9}$ of
\newlength{\NewLengthOne}
\settowidth{\NewLengthOne}{$z^2$}
\begin{equation}
\label{equation.6x9 phased BIBD ETF}
\setlength{\arraycolsep}{2pt}
\bfPsi=\left[\begin{array}{lllllllll}
\makebox[\NewLengthOne][l]{1}&\makebox[\NewLengthOne][l]{1}&\makebox[\NewLengthOne][l]{1}&  0&  0&  0&  0&  0&  0\\
  0&  0&  0&  1&  1&  1&  0&  0&  0\\
  0&  0&  0&  0&  0&  0&  1&  1&  1\\
  1&  0&  0&  1&  0&  0&  1&  0&  0\\
  0&  1&  0&  0&z^2&  0&  0&  z&  0\\
  0&  0&  1&  0&  0&  z&  0&  0&z^2\\
  1&  0&  0&  0&  0&z^2&  0&z^2&  0\\
  0&  1&  0&  z&  0&  0&  0&  0&  1\\
  0&  0&  1&  0&  1&  0&  z&  0&  0\\
  1&  0&  0&  0&  z&  0&  0&  0&  z\\
  0&  1&  0&  0&  0&  1&z^2&  0&  0\\
  0&  0&  1&z^2&  0&  0&  0&  1&  0\\
\end{array}\right],
\quad
z=\exp(\tfrac{2\pi\rmi}3),
\end{equation}
form a phased $\BIBD(9,3,1)$ ETF for their $6$-dimensional span.

In the context of~\cite{FickusJMPW19}, the interesting thing about Theorem~\ref{theorem.phased binder} is that it generalizes that theory so as to permit BIBDs with $\lambda>1$, such as~\eqref{equation.5x10 phased binder}.
This is a nontrivial generalization,
since if $\bfPsi$ is obtained by phasing the incidence matrix of a $\BIBD(v,k,\lambda)$ where $\lambda>1$ then its columns are not automatically equiangular.
Instead, this only holds under certain hypotheses,
such as~\eqref{equation.consistency of relative phase}.
In the next result, we verify this fact and then prove a type of converse to Theorem~\ref{theorem.phased binder}.

\begin{theorem}
\label{theorem.Naimark complement of phased BIBD ETF}
Let $\set{\calK_i}_{i=1}^b$ be subsets of $[n]$ that form a $\BIBD(n,k,\lambda)$ for some $k\geq 2$, $\lambda>0$.
For each $i=1,\dotsc,b$, let $\set{c_{i,j}}_{j\in\calK_i}$ be unimodular scalars that satisfy
\eqref{equation.consistency of relative phase}.
Then the columns $\set{\bfpsi_j}_{j=1}^{n}$ of the matrix $\bfPsi$ defined by~\eqref{equation.phased incidence matrix} are equiangular.
Moreover, if $\set{\bfpsi_j}_{j=1}^{n}$ is an ETF for its span,
then this span is necessarily of dimension~\eqref{equation.dimension of phased BIBD ETF},
and each subset $\set{\bfphi_j}_{j\in\calK_i}$ of any Naimark complement $\set{\bfphi_j}_{j=1}^{n}$ of $\set{\bfpsi_j}_{j=1}^{n}$ is a regular simplex.
\end{theorem}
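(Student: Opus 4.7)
The plan is to prove the three claims in order: equiangularity of $\set{\bfpsi_j}_{j=1}^{n}$ by directly expanding inner products, the dimension formula by applying Welch bound equality to the coherence computed in the first step, and the regular simplex structure by combining the Naimark identity $\bfPhi\bfPsi^*=\bfzero$ from~\eqref{equation.cross frame operator of Naimark complements} with Theorem~\ref{theorem.spark bound equality}.

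For equiangularity, I would directly evaluate
\[
\ip{\bfpsi_j}{\bfpsi_{j'}}=\tfrac{1}{\lambda}\sum_{\set{i\,:\,j,j'\in\calK_i}}\overline{c_{i,j}}c_{i,j'}
\]
for $j\neq j'$, and then invoke the two structural hypotheses: the BIBD property guarantees that the sum has exactly $\lambda$ terms, while~\eqref{equation.consistency of relative phase} guarantees that all $\lambda$ of these terms coincide, so $\ip{\bfpsi_j}{\bfpsi_{j'}}$ reduces to their common unimodular value. The analogous diagonal computation uses the BIBD replication number $r=\lambda\tfrac{n-1}{k-1}$ and the unimodularity of the $c_{i,j}$ to give $\norm{\bfpsi_j}^2=\tfrac{n-1}{k-1}$ for every $j$, so the coherence of $\set{\bfpsi_j}_{j=1}^{n}$ is $\mu=\tfrac{k-1}{n-1}$.

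Assuming now that $\set{\bfpsi_j}_{j=1}^{n}$ is an ETF for its $d'$-dimensional span, equality in the Welch bound~\eqref{equation.Welch bound} at $\mu=\tfrac{k-1}{n-1}$ yields an algebraic equation in $d'$ that rearranges directly to~\eqref{equation.dimension of phased BIBD ETF}. For the regular simplex claim, any Naimark complement $\set{\bfphi_j}_{j=1}^{n}$ of $\set{\bfpsi_j}_{j=1}^{n}$ satisfies $\bfPhi\bfPsi^*=\bfzero$; applying this identity to the $i$th standard basis vector of $\bbF^b$ and using the definition~\eqref{equation.phased incidence matrix} yields $\sum_{j\in\calK_i}\overline{c_{i,j}}\bfphi_j=\bfzero$, exhibiting the $k$ vectors $\set{\bfphi_j}_{j\in\calK_i}$ as linearly dependent. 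A short calculation with $d'$ and $n-d'=\tfrac{n(k-1)^2}{(n-1)+(k-1)^2}$ shows that the inverse Welch bound of the $n$-vector Naimark complement ETF in dimension $n-d'$ equals exactly $k-1$, so Theorem~\ref{theorem.spark bound equality} identifies $\set{\bfphi_j}_{j\in\calK_i}$ as a regular simplex.

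The main obstacle is simply verifying the two algebraic identities---that Welch equality produces~\eqref{equation.dimension of phased BIBD ETF}, and that the inverse Welch bound of the Naimark complement is $k-1$---but both reduce to direct substitutions rather than genuine difficulties. The substantive content of the argument is the interplay between the BIBD structure and~\eqref{equation.consistency of relative phase} driving equiangularity, together with the recognition that $\bfPhi\bfPsi^*=\bfzero$ already encodes, block by block, the precise linear dependencies required to invoke Theorem~\ref{theorem.spark bound equality}.
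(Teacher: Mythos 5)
Your proposal is correct and follows essentially the same route as the paper: equiangularity via the direct inner-product computation combining the BIBD count with~\eqref{equation.consistency of relative phase}, the dimension formula via equality in the Welch bound at coherence $\tfrac{k-1}{n-1}$, and the simplex claim via applying $\bfPhi\bfPsi^*=\bfzero$ to the standard basis vectors of $\bbF^b$ together with the computation that the Naimark complement's inverse Welch bound is $k-1$ and Theorem~\ref{theorem.spark bound equality}. No gaps worth noting.
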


\begin{proof}
Since $\set{\calK_i}_{i=1}^b$ is a $\BIBD(n,k,\lambda)$,
each column $\bfpsi_j$ of $\bfPsi$ contains exactly $r=\lambda\frac{n-1}{k-1}$ nonzero entries, each of modulus \smash{$\lambda^{-\frac12}$}, and so \smash{$\norm{\bfpsi_j}^2=\frac{n-1}{k-1}$}.
Moreover, for any $j\neq j'$,
\eqref{equation.consistency of relative phase} and~\eqref{equation.phased incidence matrix} imply
\begin{equation*}
\ip{\bfpsi_j}{\bfpsi_{j'}}
=(\bfPsi^*\bfPsi)(j,j')
=\sum_{i=1}^b\overline{\bfPsi(i,j)}\bfPsi(i,j')
=\sum_{\set{i: j,j'\in\calK_i}}\tfrac1{\lambda}\,\overline{c_{i,j}}c_{i,j'}
=\overline{c_{i',j}}c_{i',j'},
\end{equation*}
where $i'$ is any fixed index such that $j,j'\in\calK_i$,
meaning $\abs{\ip{\bfpsi_j}{\bfpsi_{j'}}}=\abs{c_{i',j}}\abs{c_{i',j'}}=1$.
Thus, having~\eqref{equation.consistency of relative phase} indeed implies that $\set{\bfpsi_j}_{j=1}^{n}$ is equiangular.

Now further assume that $\set{\bfpsi_j}_{j=1}^{n}$ is an ETF for its span, and let $\set{\bfphi_j}_{j=1}^{n}$ be any Naimark complement of it.
Letting $d=\dim(\Span\set{\bfphi_j}_{j=1}^{n})$, we have that $\set{\bfpsi_j}_{j=1}^{n}$ is an ETF for a space of dimension $n-d$, and so its coherence is:
\begin{equation}
\label{equation.proof of Naimark complement of phased BIBD ETF 1}
\bigbracket{\tfrac{d}{(n-d)(n-1)}}^{\frac12}
=\bigbracket{\tfrac{n-(n-d)}{(n-d)(n-1)}}^{\frac12}
=\max_{j\neq j'}\tfrac{\abs{\ip{\bfpsi_j}{\bfpsi_{j'}}}}{\norm{\bfpsi_j}\norm{\bfpsi_{j'}}}
=\max_{j\neq j'}\tfrac{k-1}{n-1}
=\tfrac{k-1}{n-1}.
\end{equation}
Manipulating~\eqref{equation.proof of Naimark complement of phased BIBD ETF 1} gives an expression for $n-d=\dim(\Span\set{\bfpsi_j}_{j=1}^{n})$ in terms of $n$ and $k$, namely~\eqref{equation.dimension of phased BIBD ETF}.
Moreover,~\eqref{equation.proof of Naimark complement of phased BIBD ETF 1} implies that the inverse Welch bound~\eqref{equation.inverse Welch bound} for $\set{\bfphi_j}_{j=1}^{n}$ is
\begin{equation*}
s
=\bigbracket{\tfrac{d(n-1)}{n-d}}^{\frac12}
=(n-1)\bigbracket{\tfrac{d}{(n-d)(n-1)}}^{\frac12}
=(n-1)\tfrac{k-1}{n-1}
=k-1.
\end{equation*}
As such, Theorem~\ref{theorem.spark bound equality} gives that a subset $\set{\bfphi_j}_{j\in\calK}$ of $\set{\bfphi_j}_{j=1}^{n}$ is a regular simplex if it consists of $s+1=k$ linearly dependent vectors.
This holds whenever $\calK=\calK_i$ for some $i=1,\dotsc,b$:
each $\calK_i$ has cardinality $k$ by assumption,
and applying~\eqref{equation.cross frame operator of Naimark complements} to the $i$th standard basis element $\bfdelta_i$ for $\bbF^b$ gives
\begin{equation*}
\bfzero
=\bfzero\bfdelta_i
=\bfPhi\bfPsi^*\bfdelta_i
=\sum_{j=1}^n(\bfPsi^*\bfdelta_i)(j)\bfphi_j
=\sum_{j=1}^n\overline{\bfPsi(i,j)}\bfphi_j
=\sum_{j\in\calK_i}\overline{c_{i,j}}\bfphi_j.\qedhere
\end{equation*}
\end{proof}

Applying this result to \eqref{equation.6x9 phased BIBD ETF} for example gives that any one of its Naimark complements $\set{\bfphi_j}_{j=1}^{9}$ is an ETF for $\bbC^3$ that contains at least $12$ simplices, one indicated by each row of $\bfPsi$.
More generally, Theorem~\ref{theorem.Naimark complement of phased BIBD ETF} implies that a Naimark complement of the phased $\BIBD(q^2,q,1)$ ETF of~\cite{FickusJMPW19} is a $q^2$-vector ETF for a space of dimension \smash{$\binom{q}{2}$} whose binder contains that BIBD, and so contains at least $q(q+1)$ regular simplices.
Similarly, this result implies that a Naimark complement of the phased $\BIBD(q^3+1,q+1,1)$ ETF of~\cite{FickusJMPW19} is a $(q^3+1)$-vector ETF for a space of dimension \smash{$\frac{q^3+1}{q+1}$} whose binder contains that BIBD, and so contains at least \smash{$q^2(\frac{q^3+1}{q+1})$} regular simplices.

Sometimes, the Naimark complement of a phased BIBD ETF contains more simplices than those indicated by the original BIBD.
For example, consider the phased $\BIBD(28,4,1)$ ETF $\set{\bfpsi_j}_{j=1}^{28}$ given in Figure~1 of~\cite{FickusJMPW19},
which is a $28$-vector ETF for $\bbF^{21}$ where $\bbF$ is either $\bbR$ or $\bbC$ depending on whether the parameter $z$ is chosen to be $-1$ or $\pm\rmi$, respectively.
When $z=\pm\rmi$, it turns out that applying BinderFinder to any one of its Naimark complements $\set{\bfphi_j}_{j=1}^{28}$ reveals that $\set{\bfphi_j}_{j=1}^{28}$ contains exactly $63$ simplices, namely those indicated by the $63$ blocks of the original $\BIBD(28,4,1)$.
In contrast, there are five times as many simplices in the real case:
when $z=-1$, BinderFinder reveals that the binder of $\set{\bfphi_j}_{j=1}^{28}$ is a $\BIBD(28,4,5)$.
By Theorem~\ref{theorem.Naimark complement of phased BIBD ETF},
the $\BIBD(28,4,5)$ that arises as the binder of $\set{\bfphi_j}_{j=1}^{28}$
necessarily contains the original $\BIBD(28,4,1)$.
Moreover, by Theorem~\ref{theorem.phased binder}, the Gram matrix $\bfPsi^*\bfPsi$ of the original phased $\BIBD(28,4,1)$ can be obtained by phasing this $\BIBD(28,4,5)$ according to the linear dependence relations~\eqref{equation.regular simplex Naimark complement} on the corresponding simplices in $\set{\bfphi_j}_{j=1}^{28}$.

In particular, we see that the $28$-vector ETF for $\bbR^{21}$ constructed in~\cite{FickusJMPW19} has multiple distinct representations as a phased BIBD ETF: it can be obtained by phasing the incidence matrix of a $\BIBD(28,4,1)$ or of a larger $\BIBD(28,4,5)$, or indeed of a $\BIBD(28,4,4)$ that consists of those blocks in this $\BIBD(28,4,5)$ that are not blocks in this $\BIBD(28,4,1)$.
More trivially, we can always vertically concatenate $\kappa$ copies of the synthesis operator of a phased $\BIBD(n,s+1,\lambda)$ ETF to form a phased $\BIBD(n,s+1,\kappa\lambda)$ representation of it.
This also explains why the dimension~\eqref{equation.dimension of phased BIBD ETF} of the span of a phased BIBD ETF is not dependent on $\lambda$.

These observations suggest the following problem: when the binder of an ETF contains a $\BIBD(n,s+1,\lambda)$, what is the smallest $\lambda$ for which this holds?
To be clear, one cannot always take $\lambda=1$:
as noted above,
the binder~\eqref{equation.5x10 binder} of the ETF~\eqref{equation.5x10 ETF} is a $\BIBD(10,4,2)$,
and a $\BIBD(10,4,1)$ does not exist since the number of blocks such a design would necessarily contain is \smash{$\lambda\frac{v(v-1)}{k(k-1)}=7.5$}, which is not an integer.
In this sense,~\eqref{equation.5x10 phased binder} gives the smallest possible phased BIBD representation of a Naimark complement of~\eqref{equation.5x10 ETF}.
As such, solving this problem requires the generalization of~\cite{FickusJMPW19} that we have provided here.
We leave a deeper investigation of this problem for future research.

To be clear, the material presented in this section only applies when the binder of our ETF contains a BIBD.
This is not always the case.
Indeed, sometimes the binder is empty.
For example, the off-diagonal entries of the Gram matrix of~\eqref{equation.6x9 phased BIBD ETF} are cube roots of unity, and so Corollary~\ref{corollary.empty triples} implies that it contains no regular simplices despite the fact that its columns form a $9$-vector ETF for a $6$-dimensional space where the inverse Welch bound is the integer \smash{$\bracket{\tfrac{6(9-1)}{9-6}}^{\frac12}=4$}.
(In contrast, its Naimark complements contain 12 simplices, one for each row of~\eqref{equation.6x9 phased BIBD ETF}.)
Moreover, as we shall see in the coming sections, there are other examples of ETFs whose binders are not empty and yet do not contain any BIBD.
To date, we have not been able to find any general conditions which guarantee that an ETF's binder contains a BIBD.
That said, using techniques similar to those of~\cite{BodmannE10,BodmannE10b}, we have been able to show that under certain conditions,
the set of $3$-element subsets of $[n]$ that satisfy~\eqref{equation.triple product condition} does indeed form a $\BIBD(n,3,\lambda)$ for some $\lambda>0$:

\begin{theorem}
\label{theorem.BIBD of triples}
Let $p$ be a prime,
and let $\set{\bfphi_j}_{j=1}^{n}$ be an equiangular tight frame with the property that  $-\ip{\bfphi_j}{\bfphi_{j'}}$ is a $p$th root of unity for any $j\neq j'$.
Then the set of $3$-element subsets of $[n]$ that satisfy~\eqref{equation.triple product condition} is a $\BIBD(n,3,\lambda)$ with
\begin{equation}
\label{equation.BIBD of triples lambda}
\lambda=\tfrac{n-2}{p}-\tfrac{p-1}{p}(\tfrac{n}{d}-2)\bigbracket{\tfrac{d(n-1)}{n-d}}^{\frac12}.
\end{equation}
\end{theorem}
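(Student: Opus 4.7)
The plan is to recast the triple-product condition arithmetically modulo $p$, express the count of valid triples via a character sum, and use Galois invariance of the ETF defining equation to evaluate each character-sum component. Rescale so $\norm{\bfphi_j}^2=s$, ensuring $|\ip{\bfphi_j}{\bfphi_{j'}}|=1$ for $j\neq j'$, and write $-\ip{\bfphi_j}{\bfphi_{j'}}=\omega^{a_{j,j'}}$ with $\omega:=e^{2\pi\rmi/p}$ and $a_{j,j'}\in\bbZ/p$, satisfying $a_{j',j}\equiv-a_{j,j'}\pmod{p}$ by Hermitian symmetry. Since $(-1)^3=-1$, the triple-product condition~\eqref{equation.triple product condition} becomes $a_{j_1,j_2}+a_{j_2,j_3}+a_{j_3,j_1}\equiv 0\pmod{p}$.

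Fix distinct $j_1,j_2\in[n]$ and let $\lambda_{j_1,j_2}$ count those $j_3\in[n]\setminus\set{j_1,j_2}$ satisfying this congruence. Character orthogonality yields
\begin{equation*}
\lambda_{j_1,j_2}=\tfrac{1}{p}\sum_{k=0}^{p-1}\omega^{k a_{j_1,j_2}}\,T_k,\qquad T_k:=\sum_{j_3\neq j_1,j_2}\omega^{k(a_{j_2,j_3}+a_{j_3,j_1})}.
\end{equation*}
The $k=0$ term contributes $(n-2)/p$. The central claim is that for each $k\in\set{1,\dotsc,p-1}$ one has $T_k=(2s-a)\,\omega^{k a_{j_2,j_1}}$, where $a:=ns/d$. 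Granting this and using $a_{j_2,j_1}=-a_{j_1,j_2}$, each $k\geq 1$ contributes $(2s-a)/p$, so
\begin{equation*}
\lambda_{j_1,j_2}=\tfrac{n-2}{p}+\tfrac{(p-1)(2s-a)}{p};
\end{equation*}
substituting $a-2s=s(\tfrac{n}{d}-2)$ together with $s=\bigbracket{\tfrac{d(n-1)}{n-d}}^{\frac12}$ recovers~\eqref{equation.BIBD of triples lambda}. As this value is independent of $(j_1,j_2)$, the set of qualifying $3$-element subsets is a $\BIBD(n,3,\lambda)$.

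The $k=1$ instance of the key identity is precisely the $(j_2,j_1)$ off-diagonal entry of $\bfG^2=a\bfG$, where $\bfG(j,j)=s$ and $\bfG(j,j')=-\omega^{a_{j,j'}}$. For general $k$, I would choose a field automorphism $\sigma_k$ of $\bbC$ with $\sigma_k(\omega)=\omega^k$ and $\sigma_k(s)=s$, and apply $\sigma_k$ entrywise to the matrix identity $\bfG^2=a\bfG$; since its coefficients then lie in the fixed field of $\sigma_k$, the resulting matrix $\bfG^{(k)}$ with $\bfG^{(k)}(j,j)=s$ and $\bfG^{(k)}(j,j')=-\omega^{k a_{j,j'}}$ still satisfies $(\bfG^{(k)})^2=a\,\bfG^{(k)}$, and reading off its $(j_2,j_1)$-entry gives the desired formula for $T_k$. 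The main obstacle is arranging $\sigma_k(s)=s$: this is automatic when $s\in\bbQ$ or when $s\notin\bbQ(\omega)$, but fails for $k$ that are quadratic non-residues modulo $p$ in the exceptional case $s\in\bbQ(\sqrt{p^*})\setminus\bbQ$ with $p^*=(-1)^{(p-1)/2}p$. In that exceptional case,~\eqref{equation.BIBD of triples lambda} is already irrational unless $n=2d$, so integrality of $\lambda$ rules out any such ETF except when $n=2d$, in which regime the $k\geq 1$ contributions cancel in Galois-conjugate pairs and~\eqref{equation.BIBD of triples lambda} reduces to $(n-2)/p$, which matches the direct count.
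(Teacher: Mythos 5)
Your core computation is correct and is essentially the Fourier--Galois dual of the paper's own argument: the paper also starts from the $(j,j')$ off-diagonal entry of the identity $(\bfPhi^*\bfPhi)^2=a\bfPhi^*\bfPhi$, but instead of character sums it sorts the $n-2$ cross terms by their exponent modulo $p$ into counts $m_{j,j',i}$ and invokes the fact that the only rational linear relations among $1,z,\dotsc,z^{p-1}$ are multiples of the all-ones relation, forcing $m_{j,j',i}=m_{j,j',0}+(\tfrac{n}{d}-2)s$ for $i\neq0$ and then solving $\sum_i m_{j,j',i}=n-2$. Your evaluation $T_k=(2s-a)\,\omega^{k a_{j_2,j_1}}$ (with $k=1$ literally being that matrix entry and general $k$ obtained by applying $\sigma_k$ entrywise to $\bfG^2=a\bfG$) carries the same information, and whenever $\sigma_k$ can be chosen to fix $s$ for every $k$ your derivation of~\eqref{equation.BIBD of triples lambda} is valid; in particular it is complete for $p=2$ and for $p\equiv3\bmod4$, where $\bbQ(\omega)$ has no real quadratic subfield.

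The genuine gap is your final sentence. In the exceptional case $s\in\bbQ(\sqrt{p^*})\setminus\bbQ$ (possible only when $p\equiv1\bmod4$) you cannot appeal to ``integrality of $\lambda$'' to rule out such ETFs with $n\neq2d$: the quantity that is an integer a priori is the count $\lambda_{j_1,j_2}$, not the right-hand side of~\eqref{equation.BIBD of triples lambda}, and the two are known to coincide only if the theorem is already proved---which is exactly what is unavailable there, since $\sigma_k(s)=s$ fails for non-residues $k$. Indeed, running your own machinery in that case (for non-residue $k$ one has $\sigma_k(s)=-s$, hence $\sigma_k(a)=-a$ and $T_k=(a-2s)\,\omega^{k a_{j_2,j_1}}$) makes the $k\geq1$ contributions cancel in conjugate pairs, so the count comes out to the perfectly self-consistent integer $(n-2)/p$; no contradiction arises, such hypothetical ETFs are not excluded, and for them your argument would yield a $\lambda$ different from~\eqref{equation.BIBD of triples lambda}. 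Closing this requires an independent argument that the hypotheses force $(\tfrac{n}{d}-2)s\in\bbQ$, i.e., that no ETF with $-\ip{\bfphi_j}{\bfphi_{j'}}$ a $p$th root of unity can have $s\in\bbQ(\sqrt{p})\setminus\bbQ$ and $n\neq2d$. In fairness, the paper's proof tacitly makes the same assumption: its linear-independence lemma is stated for rational coefficients yet is applied to the coefficient $(\tfrac{n}{d}-2)s+m_{j,j',0}$, whose rationality is automatic unless $p\equiv1\bmod4$ with $s$ irrational. So your proposal is about as complete as the paper's, but the specific patch you propose for the corner case is circular and should be withdrawn or replaced by an actual nonexistence argument.
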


\begin{proof}
Note that in order for $\ip{\bfphi_j}{\bfphi_{j'}}$ to be the negatives of $p$th roots of unity,
we are implicitly requiring that these inner products are unimodular, namely that $\norm{\bfphi_j}^2=s$ for all $j$, where $s$ is the inverse Welch bound~\eqref{equation.inverse Welch bound}.  As such, this ETF has tight frame constant $a=\frac{ns}{d}$ and Lemma~\ref{lemma:tight frame for its span} gives
$\tfrac{ns}{d}\bfPhi^*\bfPhi=(\bfPhi^*\bfPhi)^2$.
For any $j\neq j'$, taking the $(j,j')$th entries of this equation gives
\begin{equation*}
\tfrac{ns}{d}\ip{\bfphi_j}{\bfphi_{j'}}
=\sum_{j''=1}^n\ip{\bfphi_j}{\bfphi_{j''}}\ip{\bfphi_{j''}}{\bfphi_{j'}}
=2s\ip{\bfphi_j}{\bfphi_{j'}}
+\sum_{\substack{j''=1\\j''\neq j,j'}}^n\ip{\bfphi_j}{\bfphi_{j''}}\ip{\bfphi_{j''}}{\bfphi_{j'}}.
\end{equation*}
For each $j\neq j'$, let $\theta(j,j')$ be an integer such that $\ip{\bfphi_j}{\bfphi_{j'}}=-z^{\theta(j,j')}$ where \smash{$z=\exp(\frac{2\pi\rmi}{p})$}.
Under this notation, the previous equation is equivalent to having
\begin{equation}
\label{equation.proof of BIBD of triples 1}
0
=(\tfrac{n}{d}-2)s+\sum_{\substack{j''=1\\j''\neq j,j'}}^n z^{\theta(j,j'')+\theta(j'',j')-\theta(j,j')}.
\end{equation}

Now note that since $p$ is prime, the primitive $p$th roots of unity $\set{z^i}_{i=1}^{p}$ are linearly independent over $\bbQ$.
As such the operator $\bfZ:\bbQ^p\rightarrow\bbC$,
\smash{$\bfZ\set{c_i}_{i=0}^{p-1}:=\sum_{i=0}^{p-1}c_i z^i$} has rank at least $p$, meaning the null space of $\bfZ$ is at most one-dimensional.
When combined with the fact that $\sum_{i=0}^{p-1}z^i=0$, this implies that the null space of $\bfZ$ is exactly one-dimensional, and consists of all constant vectors.
That is, if rational scalars $\set{c_i}_{i=0}^{p-1}$ satisfy $\sum_{i=0}^{p-1}c_i z^i=0$ then they are necessarily all equal.

To exploit this fact here, for any $i=0,\dotsc,p-1$, we let
\begin{equation}
\label{equation.proof of BIBD of triples 2}
m_{j,j',i}
:=\#\set{j''=1,\dotsc,n: j''\neq j,j',\ \theta(j,j'')+\theta(j'',j')-\theta(j,j')\equiv i\bmod p}
\end{equation}
and rewrite \eqref{equation.proof of BIBD of triples 1} in terms of these nonnegative integers:
\begin{equation*}
0
=(\tfrac{n}{d}-2)s+\sum_{i=0}^{p-1}m_{j,j',i}z^i
=[(\tfrac{n}{d}-2)s+m_{j,j',0}]z^0+\sum_{i=1}^{p-1}m_{j,j',i}z^i.
\end{equation*}
As such, $m_{j,j',i}=(\tfrac{n}{d}-2)s+m_{j,j',0}$ for all $i=1,\dotsc,p-1$.
Moreover,
the values $\set{m_{j,j',i}}_{i=0}^{p-1}$ are the cardinalities of sets that partition $\set{j''=1,\dotsc,n: j''\neq j,j'}$, meaning they sum to $n-2$.
Together, these facts imply
\begin{equation}
\label{equation.proof of BIBD of triples 3}
n-2
=\sum_{i=0}^{p-1}m_{j,j',i}
=m_{j,j',0}+(p-1)[(\tfrac{n}{d}-2)s+m_{j,j',0}]
=p\,m_{j,j',0}+(p-1)(\tfrac{n}{d}-2)s.
\end{equation}

Now consider the set of all $3$-element subsets $\set{j_1,j_2,j_3}$ of $[n]$ such that~\eqref{equation.triple product condition} holds.
Our claim is that this set is a $\BIBD(n,3,\lambda)$ where $\lambda$ is given by~\eqref{equation.BIBD of triples lambda}.
That is, for any $j\neq j'$, we want to show there are exactly $\lambda$ elements in the set
\begin{align*}
&\set{j''=1,\dotsc,n: j''\neq j,j',\ \ip{\bfphi_j}{\bfphi_{j'}}
\ip{\bfphi_{j'}}{\bfphi_{j''}}\ip{\bfphi_{j''}}{\bfphi_{j}}=-1}\\
&\quad=\set{j''=1,\dotsc,n: j''\neq j,j',\ z^{\theta(j,j')+\theta(j',j'')+\theta(j'',j)}=1}\\
&\quad=\set{j''=1,\dotsc,n: j''\neq j,j',\ \theta(j,j')-\theta(j'',j')-\theta(j,j'')\equiv 0\bmod p}.
\end{align*}
Comparing this to~\eqref{equation.proof of BIBD of triples 2},
this reduces to showing that $m_{j,j',0}=\lambda$ for all $j\neq j'$,
which immediately follows from solving for $m_{j,j',0}$ in~\eqref{equation.proof of BIBD of triples 3}.
\end{proof}

In the particular case where the ETF is real, the hypothesis of Theorem~\ref{theorem.BIBD of triples} is automatically satisfied with $p=2$, provided $\set{\bfphi_j}_{j=1}^{n}$ is without loss of generality scaled so that $\norm{\bfphi_j}^2=s$ for all $j$ where $s$ is the inverse Welch bound~\eqref{equation.inverse Welch bound}.
In this case, \eqref{equation.BIBD of triples lambda} reduces to \smash{$\lambda=\tfrac{n-2}2-\tfrac12(\tfrac{n}{d}-2)\bracket{\tfrac{d(n-1)}{n-d}}^{\frac12}$,}
meaning the number of triples that satisfy~\eqref{equation.triple product condition} is exactly
\begin{equation*}
\lambda\tfrac{n(n-1)}{3(3-1)}
=\tfrac12\tbinom{n}{3}-\tfrac1{12}n(n-1)(\tfrac{n}{d}-2)\bigbracket{\tfrac{d(n-1)}{n-d}}^{\frac12}.
\end{equation*}
When $n=2d$, this means that exactly half of all possible $3$-element subsets of a real ETF $\set{\bfphi_j}_{j=1}^{n}$ satisfy~\eqref{equation.triple product condition} as evidenced, for example, by a list~\eqref{equation.5x10 triples} of such subsets that arises in the $d=5$ case.

\section{Equiangular tight frames that are a disjoint union of regular simplices}

We say an ETF $\set{\bfphi_j}_{j=1}^{n}$ for a space of dimension $d$ is a \textit{disjoint union of regular simplices} if its indices can be partitioned into sets $\set{\calK_i}_{i=1}^{v}$ such that $\set{\bfphi_j}_{j\in\calK_i}$ is a regular simplex for any $i=1,\dotsc,v$.
Since each $\calK_i$ necessarily contains $s+1$ elements where $s$ is the inverse Welch bound~\eqref{equation.inverse Welch bound}, this requires that $n=v(s+1)$.
In particular, $s+1$ must divide $n$.
In this section, we discuss two results regarding such ETFs, namely that they have a Naimark complement of a particular form, and that they lead to certain optimal packings of subspaces called \textit{equichordal tight fusion frames}.

Steiner ETFs are unions of simplices by design~\cite{FickusMT12}.
To elaborate, let $\bfX$ be the $b\times v$ incidence matrix of a $\BIBD(v,k,1)$,
which is also known as a \textit{$(2,k,v)$-Steiner system}.
In this $\lambda=1$ setting, each vertex in this block design is contained in exactly \smash{$r=\frac{v-1}{k-1}$} blocks, and there are \smash{$b=\frac{v}{k}r=\frac{v(v-1)}{k(k-1)}$} blocks total.
For every $i=1,\dotsc,v$,
we form a corresponding $b\times r$ \textit{embedding matrix} $\bfE_i$ whose columns are standard basis elements that sum to the $i$th column $\bfx_i$ of $\bfX$.
For example, for a $\BIBD(4,2,1)$ we can take:
\begin{equation}
\label{equation.6x4 incidence matrix and embedding operators}
\bfX
=\left[\begin{array}{cccc}
1&1&0&0\\
0&0&1&1\\
1&0&1&0\\
0&1&0&1\\
1&0&0&1\\
0&1&1&0
\end{array}\right],\quad
\bfE_1
=\left[\begin{array}{ccc}
1&0&0\\
0&0&0\\
0&1&0\\
0&0&0\\
0&0&1\\
0&0&0
\end{array}\right],\quad
\bfE_2
=\left[\begin{array}{ccc}
1&0&0\\
0&0&0\\
0&0&0\\
0&1&0\\
0&0&0\\
0&0&1
\end{array}\right],\quad
\bfE_3
=\left[\begin{array}{ccc}
0&0&0\\
1&0&0\\
0&1&0\\
0&0&0\\
0&0&0\\
0&0&1
\end{array}\right],\quad
\bfE_4
=\left[\begin{array}{ccc}
0&0&0\\
1&0&0\\
0&0&0\\
0&1&0\\
0&0&1\\
0&0&0
\end{array}\right].
\end{equation}
We then use $\set{\bfE_i}_{i=1}^{v}$ to isometrically embed $v$ copies of a (unimodular) \textit{flat} regular simplex for $\bbF^r$,
namely the columns $\set{\bff_j}_{j=1}^{r+1}$ of an $r\times(r+1)$ matrix $\bfF$ obtained by removing a single row from a possibly-complex Hadamard matrix.
Doing so produces a sequence of $n=v(r+1)$ vectors \smash{$\set{\bfE_i\bff_j}_{i=1,}^{v}\,_{j=1}^{r+1}$} which,
as we explain below, is an ETF for $\bbF^d$ where $d=b$.
For example, when $r=3$ we can remove the first row of the canonical $4\times 4$ Hadamard matrix to obtain
\begin{equation*}
\bfF
=\left[\begin{array}{cccc}\bff_1&\bff_2&\bff_3&\bff_4\end{array}\right]
=\left[\begin{array}{rrrr}
 1&-1& 1&-1\\
 1& 1&-1&-1\\
 1&-1&-1& 1
\end{array}\right],
\end{equation*}
and applying $\set{\bfE_i}_{i=1}^{4}$ to $\set{\bff_j}_{j=1}^{4}$ gives
\smash{$\bfPhi=\left[\begin{array}{cccc}
\bfE_1\bfF&\bfE_2\bfF&\bfE_3\bfF&\bfE_4\bfF
\end{array}\right]$},
that is,
\begin{equation}
\label{equation.6x16 Steiner ETF}
\bfPhi
=\left[\begin{array}{rrrrrrrrrrrrrrrr}
 1&-1& 1&-1&\phantom{-}1&-1& 1&-1&\phantom{-}0& 0& 0& 0&\phantom{-}0& 0& 0& 0\\
 0& 0& 0& 0& 0& 0& 0& 0& 1&-1& 1&-1& 1&-1& 1&-1\\
 1& 1&-1&-1& 0& 0& 0& 0& 1& 1&-1&-1& 0& 0& 0& 0\\
 0& 0& 0& 0& 1& 1&-1&-1& 0& 0& 0& 0& 1& 1&-1&-1\\
 1&-1&-1& 1& 0& 0& 0& 0& 0& 0& 0& 0& 1&-1&-1& 1\\
 0& 0& 0& 0& 1&-1&-1& 1& 1&-1&-1& 1& 0& 0& 0& 0
\end{array}\right].
\end{equation}
For each $i$, $\bfE_i$ is an isometry, that is, $\bfE_i^*\bfE_i^{}=\bfI$.
As such, each subset $\set{\bfE_i\bff_j}_{j=1}^{r+1}$ is a regular simplex with $\norm{\bfE_i\bff_j}^2=\norm{\bff_j}^2=r$ for all $j$ and $\abs{\ip{\bfE_i\bff_j}{\bfE_i\bff_{j'}}}=\abs{\ip{\bff_j}{\bff_{j'}}}=1$ for all $j\neq j'$.
Meanwhile, for any $i\neq i'$,
the fact that $\bfX$ is the incidence matrix of a $\BIBD(v,k,1)$ implies its $i$th and $i'$th columns only have one index of common support,
implying $\bfE_i^*\bfE_{i'}^{}$ has exactly one nonzero entry, and this entry has value $1$.
As such, $\bfE_i^*\bfE_{i'}^{}=\bfdelta_{l}^{}\bfdelta_{l'}^*$ for some standard basis elements $\bfdelta_{l},\bfdelta_{l'}$ for $\bbF^s$ that depend on $i$ and $i'$.
As such, for any $i,i'$ and $j,j'$ with $i\neq i'$,
\begin{equation*}
\ip{\bfE_i\bff_j}{\bfE_{i'}\bff_{j'}}
=\ip{\bff_j}{\bfE_i^*\bfE_{i'}^{}\bff_{j'}}
=\ip{\bff_j}{\bfdelta_{l}^{}\bfdelta_{l'}^*\bff_{j'}}
=\ip{\bfdelta_{l}^*\bff_j}{\bfdelta_{l'}^*\bff_{j'}}
=\overline{\bff_j(l)}\bff_{j'}(l'),
\end{equation*}
meaning such inner products are also unimodular, being the product of two unimodular numbers.

Together, these facts imply that
\smash{$\set{\bfE_i\bff_j}_{i=1,}^{v}\,_{j=1}^{r+1}$} is a disjoint union of $v$ regular $r$-simplices, and that it is equiangular with coherence $\frac1r$.
Moreover, since $vk=br$ and $v-1=r(k-1)$, this sequence of vectors has redundancy
\begin{equation}
\label{equation.redundancy of Steiner ETF}
\tfrac nd
=\tfrac{v}{b}(r+1)
=\tfrac{k}{r}(r+1)
=k(1+\tfrac1r)
>k\geq 2.
\end{equation}
In particular, dividing $\frac{n-d}{d}=k(1+\tfrac1r)-1=\tfrac1r(v+k-1)$ by $n-1=vr+(v-1)=r(v+k-1)$ gives a Welch bound of $\frac1r$.
Since the coherence of \smash{$\set{\bfE_i\bff_j}_{i=1,}^{v}\,_{j=1}^{s+1}$} achieves this bound, these vectors are necessarily an ETF for $\bbF^b$,
with an inverse Welch bound~\eqref{equation.inverse Welch bound} of $s=r$.
(Alternatively, it is straightforward to show the corresponding synthesis operator $\bfPhi$ has equal-norm orthogonal rows~\cite{FickusMT12}.)
This type of construction was introduced in~\cite{GoethalsS70} as a method for obtaining SRGs.
In~\cite{FickusMT12} it was rediscovered and recognized as a method for constructing ETFs.
These ETFs are real if the flat regular simplex $\set{\bff_j}_{j=1}^{r+1}$ is real, namely when it is obtained by removing a row from a real Hadamard matrix of size $r+1$, which requires $r+1$ to be divisible by $4$~\cite{FickusJMP19}.
Otherwise, one can always obtain a complex flat regular simplex $\set{\bff_j}_{j=1}^{r+1}$ by removing a row from a discrete Fourier transform matrix of size $r+1$, for example.

It turns out that Steiner ETFs are not the only ETFs that are a disjoint union of simplices.
For example, as detailed and generalized in the next section,
$\set{1,2,3,4,8,11,12,14}$ is an $8$-element difference set in $\bbZ_{15}$,
and so extracting the corresponding $8$ rows of the (inverse) discrete Fourier transform (DFT) of size $15$ yields a matrix whose columns $\set{\bfphi_j}_{j=0}^{14}$ form an ETF for $\bbC^8$:
\begin{equation}
\label{equation.8x15 ETF}
\bfPhi=\tfrac1{\sqrt{2}}\left[\begin{array}{lllllllllllllll}
z^{0}&z^{1}&z^{2}&z^{3}&z^{4}&z^{5}&z^{6}&z^{7}&z^{8}&z^{9}&z^{10}&z^{11}&z^{12}&z^{13}&z^{14}\\
z^{0}&z^{2}&z^{4}&z^{6}&z^{8}&z^{10}&z^{12}&z^{14}&z^{1}&z^{3}&z^{5}&z^{7}&z^{9}&z^{11}&z^{13}\\
z^{0}&z^{3}&z^{6}&z^{9}&z^{12}&z^{0}&z^{3}&z^{6}&z^{9}&z^{12}&z^{0}&z^{3}&z^{6}&z^{9}&z^{12}\\
z^{0}&z^{4}&z^{8}&z^{12}&z^{1}&z^{5}&z^{9}&z^{13}&z^{2}&z^{6}&z^{10}&z^{14}&z^{3}&z^{7}&z^{11}\\
z^{0}&z^{8}&z^{1}&z^{9}&z^{2}&z^{10}&z^{3}&z^{11}&z^{4}&z^{12}&z^{5}&z^{13}&z^{6}&z^{14}&z^{7}\\
z^{0}&z^{11}&z^{7}&z^{3}&z^{14}&z^{10}&z^{6}&z^{2}&z^{13}&z^{9}&z^{5}&z^{1}&z^{12}&z^{8}&z^{4}\\
z^{0}&z^{12}&z^{9}&z^{6}&z^{3}&z^{0}&z^{12}&z^{9}&z^{6}&z^{3}&z^{0}&z^{12}&z^{9}&z^{6}&z^{3}\\
z^{0}&z^{14}&z^{13}&z^{12}&z^{11}&z^{10}&z^{9}&z^{8}&z^{7}&z^{6}&z^{5}&z^{4}&z^{3}&z^{2}&z^{1}
\end{array}\right],
\
z=\exp(\tfrac{2\pi\rmi}{15}).
\end{equation}
Here, we have scaled $\bfPhi$ so that $\norm{\bfphi_j}^2$ is the inverse Welch bound~\eqref{equation.inverse Welch bound}, namely \smash{$s=\bracket{\frac{8(15-1)}{15-8}}^{\frac12}=4$}.
Theorem~\ref{theorem.spark bound equality} gives that a subset \smash{$\set{\bfphi_j}_{j\in\calK}$} of \smash{$\set{\bfphi_j}_{j=0}^{14}$} is a regular simplex if and only if it consists of $s+1=5$ linearly dependent vectors.
Moreover, since $\frac{n}{s+1}=\frac{15}{5}=3$,
this ETF could be a disjoint union of $3$ regular simplices.
Computing its binder reveals this is indeed the case:
\begin{equation}
\label{equation.8x15 binder}
\set{0,3,6,9,12},\
\set{1,4,7,10,13},\
\set{2,5,8,11,14}.
\end{equation}
(As we shall see in the next section, it is not a coincidence that this binder contains all cosets of the subgroup of $\bbZ_{15}$ of order $5$.)
Despite being a disjoint union of regular simplices,
this ETF is not equivalent to any Steiner ETF:
there does not exist a $\BIBD(v,k,1)$ with $v=3$ and $s=4$ since the corresponding value of $k=\frac{v-1}{s}+1=\frac32$ is not an integer.

As we now explain, every ETF \smash{$\set{\bfphi_j}_{j=1}^{n}$} that is a disjoint union of regular simplices has a Naimark complement which itself is ``almost" a disjoint union of regular simplices.
Here without loss of generality we write \smash{$\set{\bfphi_j}_{j=1}^{n}$} as \smash{$\set{\bfphi_{i,j}}_{i=1,}^{v}\,_{j=1}^{s+1}$} where $s$ is the inverse Welch bound~\eqref{equation.inverse Welch bound} and for each $i=1,\dotsc,v$ the subset \smash{$\set{\bfphi_{i,j}}_{j=1}^{s+1}$} is a regular simplex with $\norm{\bfphi_{i,j}}^2=s$ for all $j$.
For each $i$, Theorem~\ref{theorem.triple products} gives that a Naimark complement \smash{$\set{c_{i,i}}_{j=1}^{s+1}$} of \smash{$\set{\bfphi_{i,j}}_{j=1}^{s+1}$} satisfies~\eqref{equation.regular simplex Naimark complement}.
By absorbing these scalars into \smash{$\set{\bfphi_{i,j}}_{j=1}^{s+1}$}, we have without loss of generality that $c_{i,j}=1$ for all $j$,
namely that \smash{$\sum_{j=1}^{s+1}\bfphi_{i,j}=\bfzero$} or equivalently that the synthesis operator $\bfPhi_i$ of \smash{$\set{\bfphi_{i,j}}_{j=1}^{s+1}$} satisfies $\bfPhi_i^*\bfPhi_i^{}=(s+1)\bfI-\bfJ$.
These facts in mind, we have the following result:

\begin{theorem}
\label{theorem.Naimark complement of union of simplices}
Let \smash{$\set{\bfphi_{i,j}}_{i=1,}^{v}\,_{j=1}^{s+1}$} be an ETF for a $d$-dimensional space where,
for any $i=1,\dotsc,v$, \smash{$\set{\bfphi_{i,j}}_{j=1}^{s+1}$} is a regular simplex.
Then $d$ is necessarily
\begin{equation}
\label{equation.dimension of union of simplices}
d=\tfrac{vs^2}{v+s-1}.
\end{equation}
Moreover, without loss of generality assuming that for each $i=1,\dotsc,v$ we have \smash{$\sum_{j=1}^{s+1}\bfphi_{i,j}=\bfzero$} and $\norm{\bfphi_{i,j}}^2=s$ for all $j$,
then
\smash{$\set{\bfphi_{i,j}}_{i=1,}^{v}\,_{j=1}^{s+1}$} has a Naimark complement of the form:
\begin{equation}
\label{equation.Naimark complement of union of simplices}
\set{(\tfrac{v+s-1}{s})^{\frac12}\bfdelta_i\oplus(\tfrac{v-1}{s})^{\frac12}\bfpsi_{i,j}}_{i=1,}^{v}\,_{j=1}^{s+1}
\subseteq\bbF^v\oplus\bbF^{vs-d}
\end{equation}
where for any $i=1,\dotsc,v$, $\bfdelta_i$ is the $i$th standard basis element of $\bbF^v$ and, for any $i=1,\dotsc,v$, $\set{\bfpsi_{i,j}}_{j=1}^{s+1}$ is a regular simplex with \smash{$\sum_{j=1}^{s+1}\bfpsi_{i,j}=\bfzero$} and $\norm{\bfpsi_{i,j}}^2=s$ for all $j$.

Here, \smash{$\set{\bfpsi_{i,j}}_{i=1,}^{v}\,_{j=1}^{s+1}$} is an equal norm tight frame with
\begin{equation}
\label{equation.inner products of partial Naimark complement}
\abs{\ip{\bfpsi_{i,j}}{\bfpsi_{i',j'}}}
=\left\{\begin{array}{cl}
s,&i=i',\ j=j',\\
1,&i=i',\ j\neq j',\\
\tfrac{s}{v-1},&i\neq i'.
\end{array}\right.
\end{equation}
\end{theorem}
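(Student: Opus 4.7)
The plan is to write down an explicit Naimark complement of the ETF by exploiting the decomposition of its Gram matrix into simplex blocks, and then to read off the $\bfpsi_{i,j}$ from the resulting block structure. First I would prove~\eqref{equation.dimension of union of simplices} by substituting $n = v(s+1)$ into the identity $s^2 = d(n-1)/(n-d)$ coming from~\eqref{equation.inverse Welch bound}; a short calculation using the identity $(v+s-1)(s+1) = n - 1 + s^2$ yields $d = vs^2/(v+s-1)$, and consequently the tight frame constant is $a = ns/d = (s+1)(v+s-1)/s$. The algebraic identity $a - s - 1 = (s+1)(v-1)/s$ will be invoked repeatedly below.

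Next, I would use the hypothesis $\sum_{j=1}^{s+1}\bfphi_{i,j} = \bfzero$ for each $i$ to define an $n \times v$ matrix $\bfE$ with $\bfE(j_0, i) = 1/\sqrt{s+1}$ when the global index $j_0$ belongs to the $i$th simplex and zero otherwise. Then $\bfE^* \bfE = \bfI_v$ and $\bfPhi \bfE = \bfzero$, so $\operatorname{range}(\bfE)$ sits inside the $(n-d)$-dimensional space $\ker \bfPhi$; note that $n - d = v + (vs - d)$. Let $\bfF$ be any $n \times (vs - d)$ matrix whose columns are an orthonormal basis for the orthogonal complement of $\operatorname{range}(\bfE)$ inside $\ker \bfPhi$. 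By Lemma~\ref{lemma:tight frame for its span}, $\bfPhi^* \bfPhi = a\bfP$ where $\bfP$ is the orthogonal projection onto the row-span of $\bfPhi$, so $a\bfI - \bfPhi^* \bfPhi = a(\bfE \bfE^* + \bfF \bfF^*)$ and $\bfPsi := \sqrt{a}\,[\bfE \;\; \bfF]^*$ is a Naimark complement of $\set{\bfphi_{i,j}}$. Its $(i,j)$-column is
\begin{equation*}
\sqrt{a/(s+1)}\,\bfdelta_i \;\oplus\; \sqrt{a}\,\bfF^*_{(i,j)} = \sqrt{(v+s-1)/s}\,\bfdelta_i \;\oplus\; \sqrt{(v-1)/s}\,\bfpsi_{i,j},
\end{equation*}
provided I set $\bfpsi_{i,j} := \sqrt{(s+1)(v+s-1)/(v-1)}\,\bfF^*_{(i,j)}$. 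This realizes the form claimed in~\eqref{equation.Naimark complement of union of simplices}.

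To extract the remaining properties of $\set{\bfpsi_{i,j}}$, I would split $\bfPsi$ into its two row-blocks $\bfPsi_1, \bfPsi_0$ and compute $\bfPsi_0^* \bfPsi_0 = (a\bfI - \bfPhi^* \bfPhi) - \bfPsi_1^* \bfPsi_1$. Here $\bfPsi_1^* \bfPsi_1$ is block diagonal with blocks $\frac{v+s-1}{s}\bfJ$, so the $i$th diagonal block of $\bfPsi_0^* \bfPsi_0$ simplifies, via $a - s - 1 = (s+1)(v-1)/s$, to exactly $\frac{v-1}{s}[(s+1)\bfI - \bfJ]$. Rescaling by $\frac{v-1}{s}$ then reads off $\norm{\bfpsi_{i,j}}^2 = s$, $\ip{\bfpsi_{i,j}}{\bfpsi_{i,j'}} = -1$ for $j \neq j'$ in the same simplex, and $\sum_j \bfpsi_{i,j} = \bfzero$ (since $\bfone$ lies in the kernel of $(s+1)\bfI - \bfJ$); in particular each $\set{\bfpsi_{i,j}}_{j=1}^{s+1}$ is a regular simplex. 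The off-diagonal blocks of $\bfPsi_0^* \bfPsi_0$ coincide with the corresponding off-diagonal blocks of $-\bfPhi^* \bfPhi$, so $\abs{\ip{\bfpsi_{i,j}}{\bfpsi_{i',j'}}} = \frac{s}{v-1}\abs{\ip{\bfphi_{i,j}}{\bfphi_{i',j'}}} = s/(v-1)$ for $i \neq i'$, yielding~\eqref{equation.inner products of partial Naimark complement}. Finally, tightness of $\set{\bfpsi_{i,j}}$ drops out of block-analyzing $\bfPsi \bfPsi^* = a\bfI_{n-d}$: the cross-block $\bfPsi_1 \bfPsi_0^*$ vanishes because $\sum_j \bfpsi_{i,j} = \bfzero$, while the diagonal block $\bfPsi_0 \bfPsi_0^* = a\bfI_{vs-d}$ rescales to $\sum_{i,j} \bfpsi_{i,j} \bfpsi_{i,j}^* = \frac{(s+1)(v+s-1)}{v-1}\bfI$.

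The main obstacle is purely bookkeeping: the various scaling constants must match across several layers of rescaling (the Gram matrix of $\bfPsi$, the Gram matrix of the $\bfpsi_{i,j}$, and the frame operator of the $\bfpsi_{i,j}$), and the algebraic identity that makes everything align is the splitting $(a-s-1)\bfI + \bfJ = \frac{v+s-1}{s}\bfJ + \frac{v-1}{s}[(s+1)\bfI - \bfJ]$ of the diagonal block of the Naimark Gram matrix, which cleanly isolates the rank-one ``$\bfdelta_i$'' contribution from the new simplex contribution.
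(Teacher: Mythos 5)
Your argument is correct: all the scalings check out, and the identities you lean on ($a=\frac{(s+1)(v+s-1)}{s}$, $a-s-1=\frac{(s+1)(v-1)}{s}$, and the block identity $a\bfI-\bfPhi_i^*\bfPhi_i^{}-\tfrac{v+s-1}{s}\bfJ=\tfrac{v-1}{s}[(s+1)\bfI-\bfJ]$) are exactly what is needed. Your route differs from the paper's mainly in how the Naimark complement is produced. You build it explicitly: since each simplex's vectors sum to zero, the normalized block indicators (your $\bfE$) form an orthonormal system inside $\ker\bfPhi$; completing them by $\bfF$ to an orthonormal basis of $\ker\bfPhi$ and scaling by $\sqrt{a}$ (valid since $\bfPhi^*\bfPhi=a\bfP$ by Lemma~\ref{lemma:tight frame for its span}) gives a complement whose columns are literally of the form~\eqref{equation.Naimark complement of union of simplices}, and all remaining structure is read off from the block decomposition of $a\bfI-\bfPhi^*\bfPhi$. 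The paper instead augments the ETF by $s+1$ copies of the scaled standard basis $\set{(\tfrac{vs}{d})^{\frac12}\bfdelta_i}$, verifies via Gram-matrix squaring and the vanishing of $\bfPhi^*\bfPhi(\bfI\otimes\bfone\bfone^*)$ that the augmented sequence is an $a$-tight frame for a $(d+v)$-dimensional span, takes an abstract Naimark complement of that augmented frame in $\bbF^{vs-d}$, rescales it to obtain the $\bfpsi_{i,j}$, and then uses the resulting identity $\bfPhi^*\bfPhi+\tfrac{v+s-1}{s}(\bfI\otimes\bfone\bfone^*)+\tfrac{v-1}{s}\bfPsi^*\bfPsi=a\bfI$ to conclude that the direct sum is a Naimark complement of the original ETF. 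Both arguments rest on the same two facts---the block indicators lie in $\ker\bfPhi$, and $d(a-s-1)=(vs-d)(s+1)$---but yours is more concrete (no intermediate tight frame to verify, and tightness of the $\bfpsi_{i,j}$ falls out of $\bfF^*\bfF=\bfI$), while the paper's purely Gram-matrix phrasing avoids choosing a basis and makes the three-way resolution of $a\bfI$ explicit. One cosmetic remark: the vanishing of the cross block $\bfPsi_1^{}\bfPsi_0^*$ is immediate from $\bfE^*\bfF=\bfzero$ by construction; invoking $\sum_{j=1}^{s+1}\bfpsi_{i,j}=\bfzero$ there is equivalent but unnecessary.
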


\begin{proof}
Letting $n=v(s+1)$,
note that by Theorem~\ref{theorem.spark bound equality},
the value of $s$ is necessarily~\eqref{equation.inverse Welch bound}.
Squaring~\eqref{equation.inverse Welch bound} and then solving for $d$ gives~\eqref{equation.dimension of union of simplices}.
Continuing,
for any $i=1,\dotsc,v$, let $\bfPhi_i$ be the synthesis operator of \smash{$\set{\bfphi_{i,j}}_{j=1}^{s+1}$}.
Letting $\bfone$ be the all-ones vector in $\bbF^{s+1}$,
\smash{$\bfPhi_i\bfone=\sum_{j=1}^{s+1}\bfphi_j=\bfzero$} for all $i$.
Moreover, Theorem~\ref{theorem.triple products} gives $\bfPhi_i^*\bfPhi_i^{}=(s+1)\bfI-\bfone\bfone^*$ for all $i$.
We regard the $n\times n$ Gram matrix $\bfPhi^*\bfPhi$ of \smash{$\set{\bfphi_{i,j}}_{i=1,}^{v}\,_{j=1}^{s+1}$} as a block matrix:
for any $i,i'=1,\dotsc,v$, its $(i,i')$th block is the $(s+1)\times(s+1)$ matrix $\bfPhi_i^*\bfPhi_{i'}^{}$.
Since \smash{$\set{\bfphi_{i,j}}_{i=1,}^{v}\,_{j=1}^{s+1}$} is a tight frame for its span,
$(\bfPhi^*\bfPhi)^2=a\bfPhi^*\bfPhi$ where \smash{$a=\frac{ns}d$}.

Now consider a sequence which consists of $s+1$ scaled copies of each $\bfdelta_i\in\bbF^v$:
\begin{equation}
\label{equation.proof of Naimark complement of union of simplices 1}
\set{(\tfrac{vs}{d})^{\frac12}\bfdelta_i}_{i=1,}^{v}\,_{j=1}^{s+1}
=\set{(\tfrac{v+s-1}{s})^{\frac12}\bfdelta_i}_{i=1,}^{v}\,_{j=1}^{s+1}.
\end{equation}
The square of its Gram matrix $\tfrac{vs}{d}(\bfI\otimes\bfone\bfone^*)$ is
$[\tfrac{vs}{d}(\bfI\otimes\bfone\bfone^*)]^2
=\tfrac{ns}{d}\tfrac{vs}{d}(\bfI\otimes\bfone\bfone^*)
=a\tfrac{vs}{d}(\bfI\otimes\bfone\bfone^*)$
meaning~\eqref{equation.proof of Naimark complement of union of simplices 1} is an $a$-tight frame for its span, namely for $\bbF^v$.
Moreover,
$\bfPhi^*\bfPhi(\bfI\otimes\bfone\bfone^*)=\bfzero$:
for any $i,i'=1,\dotsc,v$,
\begin{equation*}
[\bfPhi^*\bfPhi(\bfI\otimes\bfone\bfone^*)](i,i')
=\sum_{i''=1}^v(\bfPhi^*\bfPhi)(i,i'')(\bfI\otimes\bfone\bfone^*)(i'',i')
=\bfPhi_i^*\bfPhi_{i'}^{}\bfone\bfone^*
=\bfPhi_i^*\bfzero\bfone^*
=\bfzero.
\end{equation*}
Together, these facts imply that
\begin{equation}
\label{equation.proof of Naimark complement of union of simplices 2}
\set{\bfphi_{i,j}\oplus(\tfrac{vs}{d})^{\frac12}\bfdelta_i}_{i=1,}^{v}\,_{j=1}^{s+1}
\end{equation}
is an $a$-tight frame for its span:
squaring the Gram matrix
$\bfPhi^*\bfPhi+\tfrac{vs}{d}(\bfI\otimes\bfone\bfone^*)$
of~\eqref{equation.proof of Naimark complement of union of simplices 2} gives
$[\bfPhi^*\bfPhi+\tfrac{vs}{d}(\bfI\otimes\bfone\bfone^*)]^2
=(\bfPhi^*\bfPhi)^2+[\tfrac{vs}{d}(\bfI\otimes\bfone\bfone^*)]^2
=a[\bfPhi^*\bfPhi+\tfrac{vs}{d}(\bfI\otimes\bfone\bfone^*)]$.
Moreover, by~\eqref{equation.dimension of ETF span} the span of \eqref{equation.proof of Naimark complement of union of simplices 2} has dimension
\smash{$\frac{n}{a}\norm{\bfphi_{i,j}\oplus(\tfrac{vs}{d})^{\frac12}\bfdelta_i}^2
=\frac{d}{s}(s+\tfrac{vs}{d})
=d+v$}.

Now let \smash{$\set{\hat{\bfpsi}_{i,j}}_{i=1,}^{v}\,_{j=1}^{s+1}$} be any Naimark complement for~\eqref{equation.proof of Naimark complement of union of simplices 2} in
$\bbF^{v(s+1)-(d+v)}=\bbF^{vs-d}$.
Since $\bfPhi_i^*\bfPhi_i^{}=(s+1)\bfI-\bfone\bfone^*$,
every diagonal block of the Gram matrix of \smash{$\set{\hat{\bfpsi}_{i,j}}_{i=1,}^{v}\,_{j=1}^{s+1}$} is
\begin{equation*}
\hat{\bfPsi}_i^*\hat{\bfPsi}_i^{}
=a\bfI-\bfPhi_i^*\bfPhi_i^{}-\tfrac{vs}{d}\bfone\bfone^*
=(a-s-1)\bfI-(\tfrac{vs}{d}-1)\bfone\bfone^*
=\tfrac{vs-d}{d}[\tfrac{d(a-s-1)}{vs-d}\bfI-\bfone\bfone^*].
\end{equation*}
Here, note $d(a-s-1)=ns-d(s+1)=(vs-d)(s+1)$, and so
$\hat{\bfPsi}_i^*\hat{\bfPsi}_i^{}=\tfrac{vs-d}{d}[(s+1)\bfI-\bfone\bfone^*]$.
In particular, letting
\smash{$\bfpsi_{i,j}=(\tfrac{d}{vs-d})^{\frac12}\hat{\bfpsi}_{i,j}=(\tfrac{s}{v-1})^{\frac12}\hat{\bfpsi}_{i,j}$} for all $i$ and $j$,
then for each $i$ we have $\bfPsi_i^*\bfPsi_i^{}=(s+1)\bfI-\bfone\bfone^*$.
This means that for any $i$, \smash{$\set{\bfpsi_{i,j}}_{j=1}^{s+1}$} is a regular simplex that satisfies \smash{$\sum_{j=1}^{s+1}\bfpsi_{i,j}=\bfzero$}, $\norm{\bfpsi_{i,j}}^2=s$ for all $j$,
and $\ip{\bfpsi_{i,j}}{\bfpsi_{i,j'}}=1$ for all $j\neq j'$.

To conclude, note that since
$\set{\hat{\bfpsi}_{i,j}}_{i=1,}^{v}\,_{j=1}^{s+1}
=\set{(\tfrac{v-1}{s})^{\frac12}\bfpsi_{i,j}}_{i=1,}^{v}\,_{j=1}^{s+1}$ is a Naimark complement of~\eqref{equation.proof of Naimark complement of union of simplices 2} where
\smash{$\set{\bfphi_{i,j}}_{i=1,}^{v}\,_{j=1}^{s+1}$} and
\eqref{equation.proof of Naimark complement of union of simplices 1} are $a$-tight frames for their spans,
then it is also an $a$-tight frame for its span.
Also,
$\bfPhi^*\bfPhi+\tfrac{v+s-1}{s}(\bfI\otimes\bfone\bfone^*)+\tfrac{v-1}{s}\bfPsi^*\bfPsi=a\bfI$,
meaning the direct sum of any two of these $a$-tight frames is a Naimark complement of the other one.
In particular, \eqref{equation.Naimark complement of union of simplices} is a Naimark complement of \smash{$\set{\bfphi_{i,j}}_{i=1,}^{v}\,_{j=1}^{s+1}$}.
Moreover, whenever $i\neq i'$, we have $\ip{\bfphi_{i,j}}{\bfphi_{i',j'}}=-\tfrac{v-1}{s}\ip{\bfpsi_{i,j}}{\bfpsi_{i',j'}}$
for all $j,j'$, implying \smash{$\abs{\ip{\bfpsi_{i,j}}{\bfpsi_{i',j'}}}=\tfrac{s}{v-1}$} in this case.
\end{proof}

We note that Theorem~1 of the recent paper~\cite{FickusJMP19} constructs an explicit Naimark complement of any Steiner ETF.
This Naimark complement is of the form~\eqref{equation.Naimark complement of union of simplices}
for a particular choice of~\smash{$\set{\bfpsi_{i,j}}_{i=1,}^{v}\,_{j=1}^{s+1}$}.
There, that result is used to show that certain flat ETFs have a flat Naimark complement.
Here, we observe a fact not observed there, namely that
$\set{\bfpsi_{i,j}}_{j=1}^{s+1}$ is a regular simplex for any $i$.
More significantly,
Theorem~\ref{theorem.Naimark complement of union of simplices} applies more generally to any ETF that is a disjoint union of regular simplices, like~\eqref{equation.8x15 ETF}.
To understand this result in the context of this example,
note that since~\eqref{equation.8x15 ETF} is formed by taking the rows of the DFT of size $15$ indexed by $\set{1,2,3,4,8,11,12,14}$,
it has a natural Naimark complement that consists of the remaining $7$ rows.
We partition those rows into the $v=3$ that are indexed by the $3$-element subgroup $\set{0,5,10}$ of $\bbZ_{15}$, namely
\begin{equation}
\label{equation.8x15 ETF first part of Naimark}
\tfrac1{\sqrt{2}}\left[\begin{array}{lllllllllllllll}
z^{0}&z^{0}&z^{0}&z^{0}&z^{0}&z^{0}&z^{0}&z^{0}&z^{0}&z^{0}&z^{0}&z^{0}&z^{0}&z^{0}&z^{0}\\
z^{0}&z^{5}&z^{10}&z^{0}&z^{5}&z^{10}&z^{0}&z^{5}&z^{10}&z^{0}&z^{5}&z^{10}&z^{0}&z^{5}&z^{10}\\
z^{0}&z^{10}&z^{5}&z^{0}&z^{10}&z^{5}&z^{0}&z^{10}&z^{5}&z^{0}&z^{10}&z^{5}&z^{0}&z^{10}&z^{5}\\
\end{array}\right],
\end{equation}
and the remaining $vs-d=4$ indexed by $\set{6,7,9,13}$:
\begin{equation}
\label{equation.8x15 ETF second part of Naimark}
\tfrac1{\sqrt{2}}\bfPsi
=\tfrac1{\sqrt{2}}\left[\begin{array}{lllllllllllllll}
z^{0}&z^{6}&z^{12}&z^{3}&z^{9}&z^{0}&z^{6}&z^{12}&z^{3}&z^{9}&z^{0}&z^{6}&z^{12}&z^{3}&z^{9}\\
z^{0}&z^{7}&z^{14}&z^{6}&z^{13}&z^{5}&z^{12}&z^{4}&z^{11}&z^{3}&z^{10}&z^{2}&z^{9}&z^{1}&z^{8}\\
z^{0}&z^{9}&z^{3}&z^{12}&z^{6}&z^{0}&z^{9}&z^{3}&z^{12}&z^{6}&z^{0}&z^{9}&z^{3}&z^{12}&z^{6}\\
z^{0}&z^{13}&z^{11}&z^{9}&z^{7}&z^{5}&z^{3}&z^{1}&z^{14}&z^{12}&z^{10}&z^{8}&z^{6}&z^{4}&z^{2}
\end{array}\right].
\end{equation}
(Here, we have carried over the same scaling factor that we applied to~\eqref{equation.8x15 ETF}.)
We apply a unitary operator to this Naimark complement to obtain another one that is of the form~\eqref{equation.Naimark complement of union of simplices}.
Specifically, applying a unitary DFT of size $3$ to~\eqref{equation.8x15 ETF first part of Naimark} transforms it into a matrix whose columns are of the form \smash{$\set{(\tfrac{v+s-1}{s})^{\frac12}\bfdelta_i}_{i=1,}^{v}\,_{j=1}^{s+1}$},
ordered in a way so that each row of this matrix indicates a member of the partition of $\bbZ_{15}$ formed by the binder~\eqref{equation.8x15 binder}, namely a regular simplex in~\eqref{equation.8x15 ETF}:
\begin{equation*}
\tfrac{\sqrt{3}}{\sqrt{2}}\left[\begin{array}{lllllllllllllll}
1&0&0&1&0&0&1&0&0&1&0&0&1&0&0\\
0&1&0&0&1&0&0&1&0&0&1&0&0&1&0\\
0&0&1&0&0&1&0&0&1&0&0&1&0&0&1
\end{array}\right].
\end{equation*}
When ordered in the same way, we may thus regard the columns of~\eqref{equation.8x15 ETF second part of Naimark} as \smash{$\set{(\tfrac{v-1}{s})^{\frac12}\bfpsi_{i,j}}_{i=1,}^{v}\,_{j=1}^{s+1}$} where,
for any $i=1,\dotsc,v$, $\set{\bfpsi_{i,j}}_{j=1}^{s+1}$ is a regular simplex.
That is,
letting $\set{\bfpsi_j}_{j=0}^{14}$ be the columns of the matrix $\bfPsi$ given in~\eqref{equation.8x15 ETF second part of Naimark} in standard order,
we have that $\set{\bfpsi_j}_{j\in\calK}$ is a regular simplex where $\calK$ is any one of the three sets~\eqref{equation.8x15 binder}.

We also note that, being a generalization of Steiner ETFs,
it is natural to take any ETF that is a disjoint union of $v$ regular $s$-simplices,
and define the associated parameters $r:=s$, $k:=\frac{v+r-1}{r}$ and $b:=\frac{v}{k}r$.
Doing so means that $v$, $k$, $r$ and $b$ satisfy $v-1=r(k-1)$ and $vr=bk$, namely the fundamental relationships of the parameters of any $\BIBD(v,k,1)$.
However, in this general setting, the parameter $k$ is not necessarily an integer.
Indeed, \eqref{equation.8x15 ETF} has $k=\frac{3+4-1}{4}=\frac32$.
It is therefore remarkable that $b$ is an integer in general: by~\eqref{equation.dimension of union of simplices},
\smash{$b=\tfrac{v}{k}r=\tfrac{vr^2}{v+r-1}=\tfrac{vs^2}{v+s-1}=d$}.
This derived parameter $k$ is useful in characterizing when \smash{$\set{\bfpsi_{i,j}}_{i=1,}^{v}\,_{j=1}^{s+1}$} given by Theorem~\ref{theorem.Naimark complement of union of simplices} is itself an ETF:
by~\eqref{equation.inner products of partial Naimark complement},
this occurs if and only if $\frac s{v-1}=1$, namely if and only if $k=2$.
In particular, \smash{$\set{\bfpsi_{i,j}}_{i=1,}^{v}\,_{j=1}^{s+1}$} is an ETF whenever
\smash{$\set{\bfphi_{i,j}}_{i=1,}^{v}\,_{j=1}^{s+1}$} is a Steiner ETF arising from the trivial $\BIBD(v,2,1)$ that consists of all $2$-element subsets of a $v$-element set.
If we instead have $k\neq 2$,
then~\eqref{equation.inner products of partial Naimark complement} implies that \smash{$\set{\bfpsi_{i,j}}_{i=1,}^{v}\,_{j=1}^{s+1}$} is a \textit{biangular tight frame}~\cite{BargGOY15,BodmannH16,CasazzaFHT16,HaasCTC17,FickusJM18}.

\subsection{A connection to equichordal tight fusion frames}

We conclude this section by discussing another property of any ETF that happens to be a disjoint union of regular simplices: it turns out that the subspaces spanned by these simplices form a type of optimal packing known as an \textit{equichordal tight fusion frame} (ECTFF).
To elaborate, a sequence $\set{\calU_i}_{i=1}^{v}$ of $s$-dimensional subspaces of a $d$-dimensional Hilbert space $\bbH$ is said to be a \textit{tight fusion frame} for $\bbH$ if there exists $a>0$ such that $\sum_{i=1}^{v}\bfP_i=a\bfI$,
where for each $i$, $\bfP_i$ is the orthogonal projection operator onto $\calU_i$
Here, $vs=\Tr(\sum_{i=1}^{v}\bfP_i)=\Tr(a\bfI)=ad$,
and so we necessarily have $a=\frac{vs}{d}$.
Moreover, the Welch bound naturally generalizes in this setting:
for any $s$-dimensional subspaces $\set{\calU_i}_{i=1}^{n}$ of $\bbH$,
\begin{equation*}
0
\leq\biggnorm{\sum_{i=1}^{v}\bfP_i-\tfrac{vs}{d}\bfI}_{\Fro}^2
=\sum_{i=1}^{v}\sum_{\substack{i'=1\\i'\neq i}}^{v}\Tr(\bfP_i\bfP_{i'})-\tfrac{vs(vs-d)}{d}
\leq v(v-1)\max_{i\neq i'}\Tr(\bfP_i\bfP_{i'})-\tfrac{vs(vs-d)}{d}.
\end{equation*}
As such, for any $s$-dimensional subspaces $\set{\calU_i}_{i=1}^{n}$ of $\bbH$,
\begin{equation}
\label{equation.generalized Welch bound}
\tfrac{s(vs-d)}{d(v-1)}
\leq\max_{i\neq i'}\Tr(\bfP_i\bfP_{i'}),
\end{equation}
where equality holds if and only if $\set{\calU_i}_{i=1}^{v}$ is an ECTFF,
namely a tight fusion frame with the additional property that $\Tr(\bfP_i\bfP_{i'})$ is constant over all $i\neq i'$.
Such tight fusion frames are called \textit{equichordal} since letting $\set{\theta_{i,i';j}}_{j=1}^{s}$ be the \textit{principal angles} between $\calU_i$ and $\calU_{i'}$, the \textit{chordal distance} between them is given by
\smash{$[\dist(\calU_i,\calU_{i'})]^2
:=\tfrac1{2}\norm{\bfP_i-\bfP_{i'}}_{\Fro}^2
=s-\Tr(\bfP_i\bfP_{i'})
=\sum_{j=1}^{s}\sin^2(\theta_{i,i';j})$}.

In the special case where $s=1$, \eqref{equation.generalized Welch bound} reduces to the Welch bound~\eqref{equation.Welch bound}: for each $i$, $\bfP_i=\bfphi_i^{}\bfphi_i^*$ for any unit vector $\bfphi_i$ in the line $\calU_i$, and so
$\Tr(\bfP_i\bfP_{i'})=\abs{\ip{\bfphi_i}{\bfphi_{i'}}}^2$.
Moreover, for any $s$ the generalized Welch bound~\eqref{equation.generalized Welch bound} is equivalent to the \textit{simplex bound}~\cite{ConwayHS96,KutyniokPCL09}:
\begin{equation*}
\min_{i\neq i'}[\dist(\calU_i,\calU_{i'})]^2\leq\tfrac{s(d-s)}{d}\tfrac{v}{v-1}.
\end{equation*}
Because of this, every ECTFF is an optimal packing of $s$-dimensional subspaces $\set{\calU_i}_{i=1}^{v}$ of $\bbH$: the smallest chordal distance between any pair of their subspaces is as large as possible.

Various constructions of ECTFFs are known in the literature~\cite{Zauner99,KutyniokPCL09,BojarovskaP15,King16}.
In particular,~\cite{Zauner99} gives that every $\BIBD(v,k,\lambda)$ generates an ECTFF $\set{\calU_i}_{i=1}^{v}$ of subspaces of dimension \smash{$r=\lambda\frac{v-1}{k-1}$} of $\bbF^d$ where $d=b=\frac{v}{k}r$.
Here, each subspace $\calU_i$ is the range of the embedding operator $\bfE_i$ arising from the $i$th column of the corresponding $b\times v$ incidence matrix $\bfX$.
For any $i\neq i'$,
$\bfE_i^*\bfE_{i'}^{}$ is a $\set{0,1}$-valued matrix with exactly $\lambda$ nonzero entries, and so
$\Tr(\bfP_i\bfP_{i'})
=\Tr(\bfE_i^{}\bfE_i^*\bfE_{i'}^{}\bfE_{i'}^*)
=\norm{\bfE_i^*\bfE_{i'}^{}}_{\Fro}^2
=\lambda$.
When $\lambda=1$, the subspaces are the spans of the regular simplices that form the corresponding Steiner ETF.
We now generalize that result, showing that any ETF that is a disjoint union of regular simplices generates an ECTFF in this same fashion:

\begin{theorem}
\label{theorem.equichordal}
Let \smash{$\set{\bfphi_{i,j}}_{i=1,}^{v}\,_{j=1}^{s+1}$} be an ETF for a $d$-dimensional space $\bbH$ where \smash{$\set{\bfphi_{i,j}}_{j=1}^{s+1}$} is a regular simplex for each $i=1,\dotsc,v$.
Then \smash{$\set{\calU_i}_{i=1}^{v}$} is an ECTFF for $\bbH$ where  \smash{$\calU_i:=\Span\set{\bfphi_{i,j}}_{j=1}^{s+1}$}.
\end{theorem}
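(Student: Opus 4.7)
The plan is to verify the two defining properties of an ECTFF directly, working with the frame operators of the individual simplices and exploiting the fact that the whole sequence is an ETF. Without loss of generality, rescale so that $\norm{\bfphi_{i,j}}^2=s$ for all $i$ and $j$, where $s$ is the inverse Welch bound~\eqref{equation.inverse Welch bound}; equiangularity then gives $\abs{\ip{\bfphi_{i,j}}{\bfphi_{i',j'}}}=1$ whenever $(i,j)\neq(i',j')$. For each $i$, let $\bfPhi_i$ denote the synthesis operator of $\set{\bfphi_{i,j}}_{j=1}^{s+1}$.

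First I would establish tightness of $\set{\calU_i}_{i=1}^v$. Since $\set{\bfphi_{i,j}}_{j=1}^{s+1}$ is a regular simplex in the $s$-dimensional subspace $\calU_i$ with $\norm{\bfphi_{i,j}}^2=s$, its tight frame constant for $\calU_i$ equals $\tfrac{(s+1)s}{s}=s+1$, so $\bfPhi_i^{}\bfPhi_i^*=(s+1)\bfP_i$. Summing over $i$, the left-hand side is the frame operator $\bfPhi\bfPhi^*$ of the entire ETF, which equals $\tfrac{ns}{d}\bfI=\tfrac{v(s+1)s}{d}\bfI$. Dividing by $s+1$ gives $\sum_{i=1}^v\bfP_i=\tfrac{vs}{d}\bfI$, so $\set{\calU_i}_{i=1}^v$ is a tight fusion frame with the required constant.

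Next I would compute $\Tr(\bfP_i\bfP_{i'})$ for $i\neq i'$. Using $\bfP_i=\tfrac{1}{s+1}\bfPhi_i^{}\bfPhi_i^*$ and the cyclicity of the trace,
\begin{equation*}
\Tr(\bfP_i\bfP_{i'})
=\tfrac{1}{(s+1)^2}\Tr(\bfPhi_i^{}\bfPhi_i^*\bfPhi_{i'}^{}\bfPhi_{i'}^*)
=\tfrac{1}{(s+1)^2}\norm{\bfPhi_i^*\bfPhi_{i'}^{}}_{\Fro}^2
=\tfrac{1}{(s+1)^2}\sum_{j,j'=1}^{s+1}\abs{\ip{\bfphi_{i,j}}{\bfphi_{i',j'}}}^2.
\end{equation*}
Since $i\neq i'$, every one of the $(s+1)^2$ summands equals $1$ by equiangularity, so $\Tr(\bfP_i\bfP_{i'})=1$, a quantity independent of the choice of $i\neq i'$. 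Thus $\set{\calU_i}_{i=1}^v$ is equichordal, which together with tightness yields the ECTFF conclusion. There is essentially no obstacle here; the only subtle point is remembering that the tight frame constant of a regular simplex equals $s+1$ under the chosen normalization, after which the identity $\bfPhi_i^{}\bfPhi_i^*=(s+1)\bfP_i$ does all the work, converting the ETF condition on $\bfPhi\bfPhi^*$ into the tight-fusion-frame condition on $\sum_i\bfP_i$ and reducing the equichordality to a count of unimodular cross inner products.
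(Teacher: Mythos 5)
Your proof is correct. The heart of it---computing $\Tr(\bfP_i\bfP_{i'})=\tfrac{1}{(s+1)^2}\norm{\bfPhi_i^*\bfPhi_{i'}^{}}_{\Fro}^2$ and observing that all $(s+1)^2$ cross inner products are unimodular, so the trace is the constant $1$---is the same computation as in the paper, just under the normalization $\norm{\bfphi_{i,j}}^2=s$ (so $\bfPhi_i^{}\bfPhi_i^*=(s+1)\bfP_i$) rather than the paper's $\norm{\bfphi_{i,j}}^2=\tfrac{s}{s+1}$ (so $\bfPhi_i^{}\bfPhi_i^*=\bfP_i$). Where you genuinely diverge is in how the ECTFF conclusion is closed: the paper never sums the projections, but instead shows that the common value $1$ achieves equality in the generalized Welch bound \eqref{equation.generalized Welch bound}, a step that requires importing the dimension formula $d=\tfrac{vs^2}{v+s-1}$ from Theorem~\ref{theorem.Naimark complement of union of simplices}. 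You instead verify the definition directly, getting tightness for free from $\sum_{i=1}^{v}\bfPhi_i^{}\bfPhi_i^*=\bfPhi\bfPhi^*=\tfrac{v(s+1)s}{d}\bfI$, i.e.\ $\sum_{i=1}^{v}\bfP_i=\tfrac{vs}{d}\bfI$, and then checking constancy of the cross-traces. Your route is self-contained (no appeal to Theorem~\ref{theorem.Naimark complement of union of simplices}) and arguably cleaner, since the tight-fusion-frame property falls out of the ETF's frame operator in one line; the paper's route buys the explicit observation that these subspaces meet the chordal-distance packing bound, making the optimality interpretation immediate. One small implicit point common to both arguments: the simplex parameter $s$ must coincide with the inverse Welch bound of the ambient ETF (so that the cross inner products have the stated modulus), which is guaranteed by Theorem~\ref{theorem.spark bound equality} and is worth a citation in your write-up.
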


\begin{proof}
Without loss of generality, let \smash{$\norm{\bfphi_{i,j}}^2=\frac{s}{s+1}$} for all $i$ and $j$.
For each $i$, this implies the regular simplex \smash{$\set{\bfphi_{i,j}}_{j=1}^{s+1}$} has tight frame constant \smash{$\frac{s+1}{s}\norm{\bfphi_{i,j}}^2=1$}.
As such, the orthogonal projection operator onto $\calU_i$ is $\bfP_i=\bfPhi_i^{}\bfPhi_i^*$ where $\bfPhi_i$ is the synthesis operator of $\set{\bfphi_{i,j}}_{j=1}^{s+1}$.
For any $i\neq i'$,
$\bfPhi_i^*\bfPhi_{i'}^{}$ is the $(i,i')$th block of the Gram matrix of
\smash{$\set{\bfphi_{i,j}}_{i=1,}^{v}\,_{j=1}^{s+1}$} and so
\begin{align*}
\Tr(\bfP_i\bfP_{i'})
=\Tr(\bfPhi_i^{}\bfPhi_i^*\bfPhi_{i'}^{}\bfPhi_{i'}^*)
=\norm{\bfPhi_i^*\bfPhi_{i'}^{}}_{\Fro}^2
=\sum_{j=1}^{s+1}\sum_{j'=1}^{s+1}\abs{\ip{\bfphi_{i,j}}{\bfphi_{i',j'}}}^2.
\end{align*}
Here, since \smash{$\set{\bfphi_{i,j}}_{i=1,}^{v}\,_{j=1}^{s+1}$} is an ETF with coherence $\frac1s$,
$\abs{\ip{\bfphi_{i,j}}{\bfphi_{i',j'}}}=\frac1s\norm{\bfphi_{i,j}}^2=\frac1s\frac{s}{s+1}=\frac1{s+1}$
whenever $(i,j)\neq(i',j')$.
In particular, $\Tr(\bfP_i\bfP_{i'})=1$.
This means $\set{\calU_i}_{i=1}^{v}$ achieves equality in the generalized Welch bound~\eqref{equation.generalized Welch bound}:
by \eqref{equation.dimension of union of simplices} of Theorem~\ref{theorem.Naimark complement of union of simplices},
the left hand side of~\eqref{equation.generalized Welch bound} is
\begin{equation*}
\tfrac{s(vs-d)}{d(v-1)}
=\tfrac{s}{v-1}(\tfrac{vs}{d}-1)
=\tfrac{s}{v-1}[\tfrac{vs(v+s-1)}{vs^2}-1]
=1.\qedhere
\end{equation*}
\end{proof}

For example, applying Theorem~\ref{theorem.equichordal} to~\eqref{equation.8x15 ETF} yields an ECTFF for $\bbC^8$ that consists of $v=3$ subspaces of dimension $s=4$.
In the next section, we generalize this construction, showing that several known infinite families of harmonic ETFs are indeed disjoint unions of simplices, and so generate ECTFFs.
In passing, we recall that for any ETF that is a disjoint union of regular simplices,
Theorem~\ref{theorem.Naimark complement of union of simplices} gives a Naimark complement for it of the form~\eqref{equation.Naimark complement of union of simplices} where, for each $i=1,\dotsc,v$, \smash{$\set{\bfpsi_{i,j}}_{j=1}^{s+1}$} is a regular simplex.
Using techniques similar to those above along with~\eqref{equation.inner products of partial Naimark complement}, it is straightforward to show that $\set{\calV_i}_{i=1}^{v}$, \smash{$\calV_i:=\Span\set{\bfpsi_{i,j}}_{j=1}^{s+1}$} defines an ECTFF of $s$-dimensional subspaces of $\bbF^{vs-d}$.
We do not pursue this here, since the existence of an ECTFF of these parameters also follows immediately by taking fusion-frame-based Naimark complements~\cite{CasazzaFMWZ11} of the ECTFFs given by Theorem~\ref{theorem.equichordal}.

\section{Families of equiangular tight frames that contain regular simplices}

We now apply the concepts and results from the previous sections to several known families of ETFs.
In particular, given an ETF, there are three things we would like to know.
First, does it contain a regular simplex?
That is, is its binder nonempty?
By Theorem~\ref{theorem.spark bound equality}, this occurs if and only if the ETF achieves equality in~\eqref{equation.spark bound}.
Second, is it a disjoint union of regular simplices?
That is, does its binder contain a partition of $[n]$?
If so, it has a Naimark complement that is ``almost" a disjoint union of simplices,
and it generates an ECTFF; see Theorems~\ref{theorem.Naimark complement of union of simplices} and~\ref{theorem.equichordal}, respectively.
Third, does its binder contain a BIBD?
If so, by Theorem~\ref{theorem.phased binder}, it has a Naimark complement that is a phased BIBD ETF.
In particular, we now discuss the degree to which we can answer these questions for harmonic ETFs~\cite{StrohmerH03,XiaZG05,DingF07}, Steiner ETFs~\cite{FickusMT12}, ETFs arising from hyperovals~\cite{FickusMJ16}, Tremain ETFs~\cite{FickusJM18}, phased BIBD ETFs~\cite{FickusJMPW19} and \textit{symmetric informationally-complete positive operator-valued measures} (SIC-POVMs)~\cite{Zauner99,RenesBSC04}.
Here, our most significant contributions are made in the context of harmonic ETFs, and so we dwell on them the most.

\subsection{Harmonic ETFs}

A subset $\calD$ of a finite abelian group $\calG$ is a \textit{difference set} if every nonzero element of $\calG$ arises as a difference of elements in $\calD$ the same number of times, namely if $\#\set{(d,d')\in\calD\times\calD: g=d-d'}$  is constant over $g\in\calG$, $g\neq 0$.
Restricting the characters of $\calG$ to a difference set $\calD$ gives a so-called \textit{harmonic} ETF~\cite{StrohmerH03,XiaZG05,DingF07}.

A harmonic ETF need not necessarily contain a regular simplex.
Indeed, the inverse Welch bound~\eqref{equation.inverse Welch bound} of a harmonic ETF need not even be an integer.
For example, the quadratic residues $\set{1,2,4}$ are a difference set in $\bbZ_7$,
and the corresponding $7$-vector ETF for $\bbC^3$ has inverse Welch bound~\smash{$\frac{3}{\sqrt{2}}$}.
That said, some ETFs do contain regular simplices.
For example, every harmonic ETF that arises from a McFarland difference set is equivalent to a Steiner ETF that arises from an affine geometry, and so is a disjoint union of regular simplices~\cite{JasperMF14}.
Moreover, as seen in the previous section, there are harmonic ETFs that are not equivalent to any Steiner ETF and yet are a disjoint union of regular simplices,
like~\eqref{equation.8x15 ETF} whose binder is~\eqref{equation.8x15 binder}.
As a generalization of~\eqref{equation.8x15 ETF}, we now show that a harmonic ETF is a disjoint union of regular simplices whenever its difference set is disjoint from a subgroup of the underlying group of the appropriate order:

\begin{theorem}
\label{theorem.harmonic ETFs}
Let $\calD$ be a $d$-element difference set in an abelian group $\calG$ of order $n>d$,
and let $s$ be the inverse Welch bound~\eqref{equation.inverse Welch bound}.
If there exists a subgroup $\calH$ of $\calG$ of order $v:=\frac{\abs{\calG}}{s+1}$ that is disjoint from $\calD$, then the corresponding harmonic ETF is a disjoint union of $v$ regular $s$-simplices.
\end{theorem}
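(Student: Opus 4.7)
The plan is to exhibit a natural partition of $\calG$ into $v$ subsets of size $s+1$, verify that each such subset indexes a linearly dependent subcollection of the ETF, and then invoke Theorem~\ref{theorem.spark bound equality} to promote linear dependence to the regular simplex property. First I would translate the hypothesis into Pontryagin-dual language. Identifying the dual of $\calG$ with $\calG$ itself, let $\calH^{\perp}\subseteq\calG$ be the subgroup consisting of all $g$ for which the character $\chi_g$ is trivial on $\calH$. Since $\abs{\calH^{\perp}}=\abs{\calG}/\abs{\calH}=s+1$, its cosets partition $\calG$ into exactly $v$ sets of size $s+1$. These cosets are the candidates for the simplices, and they agree with the binder~\eqref{equation.8x15 binder} of the motivating example~\eqref{equation.8x15 ETF}, where $\calH=\set{0,5,10}\subset\bbZ_{15}$ has order $v=3$, is disjoint from $\calD$, and the three simplices are the cosets of $\calH^{\perp}=\set{0,3,6,9,12}$.

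Next I would verify that $\sum_{g\in\calK}\bfphi_g=\bfzero$ for every coset $\calK=g_0+\calH^{\perp}$. Writing the $\delta$th entry of $\bfphi_g$ as $\chi_g(\delta)$ for $\delta\in\calD$ (up to the overall normalization used to define the harmonic ETF), the corresponding entry of the sum factors as $\chi_{g_0}(\delta)\sum_{h\in\calH^{\perp}}\chi_h(\delta)$. By the standard character-sum orthogonality applied to the subgroup $\calH^{\perp}$, the inner sum equals $\abs{\calH^{\perp}}$ when $\delta\in(\calH^{\perp})^{\perp}=\calH$ and vanishes otherwise. Since the hypothesis $\calD\cap\calH=\emptyset$ rules out the first possibility for every $\delta\in\calD$, the sum is zero on every coordinate, so $\sum_{g\in\calK}\bfphi_g=\bfzero$; in particular the $s+1$ vectors $\set{\bfphi_g}_{g\in\calK}$ are linearly dependent.

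Finally, Theorem~\ref{theorem.spark bound equality} applies verbatim: any $s+1$ linearly dependent vectors inside an ETF with inverse Welch bound $s$ form a regular $s$-simplex. Applying this to each of the $v$ cosets of $\calH^{\perp}$ yields the desired decomposition of the harmonic ETF into $v$ regular $s$-simplices. The main subtlety I anticipate is bookkeeping the Pontryagin duality cleanly, since the hypothesis is phrased in terms of a subgroup of order $v$ living inside $\calG$ itself while the simplex-indexing subgroup of order $s+1$ naturally lives on the dual side. Once that identification is made, nothing essential remains beyond a routine character-sum calculation.
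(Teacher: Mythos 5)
Your proposal is correct and follows essentially the same route as the paper: partition the (dual) index group into the $v$ cosets of the annihilator $\calH^{\perp}$ of order $s+1$, use the character-sum (Poisson summation) identity together with $\calD\cap\calH=\emptyset$ to get $\sum_{\gamma\in\calK}\bfphi_\gamma=\bfzero$ on each coset $\calK$, and then invoke Theorem~\ref{theorem.spark bound equality} to conclude each coset indexes a regular $s$-simplex. The only cosmetic differences are that you identify $\calG$ with its Pontryagin dual and treat all cosets in one uniform computation, whereas the paper keeps the dual group $\Gamma$ explicit and first handles $\calH^{\perp}$ itself before extending to its cosets.
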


\begin{proof}
Let $\Gamma$ be the Pontryagin dual of $\calG$,
namely the group of all (continuous) homomorphisms from $\calG$ into $\bbT=\set{z\in\bbC: \abs{z}=1}$.
Let $\bfPsi$ be the character table of $\calG$, that is, $\bfPsi(g,\gamma):=\gamma(g)$ for all $g\in\calG$, $\gamma\in\Gamma$.
Let $\bfPhi$ be the $d\times n$ submatrix of $\bfPsi$ whose rows are indexed by $\calD$.
Since $\calD$ is a difference set, the columns $\set{\bfphi_\gamma}_{\gamma\in\Gamma}$ of $\bfPhi$ form an ETF for $\bbC^\calD\cong\bbC^d$~\cite{XiaZG05,DingF07}.

Since $\bfPsi$ is a complex Hadamard matrix, the $d$ rows of $\bfPsi$ that constitute $\bfPhi$ are orthogonal to the remaining $n-d$ rows of $\bfPsi$.
Since $\calD\cap\calH=\emptyset$, this includes the $v$ rows of $\bfPsi$ that are indexed by $\calH$.
Moreover, the finite Poisson summation formula gives that these $\calH$-indexed rows sum to $v$ times the characteristic function $\bfone_{\calH^\perp}$ of the annihilator $\calH^\perp=\set{\gamma\in\Gamma: \gamma(h)=1, \forall\, h\in\calH}$ of $\calH$.
Here $\calH^\perp$ is a subgroup of $\Gamma$ which is isomorphic to $\calG/\calH$ and so has order $s+1$.
As such, $\bfone_{\calH^\perp}$ is orthogonal to the rows of $\bfPhi$,
giving \smash{$\sum_{\gamma\in\calH^\perp}\bfphi_\gamma=\bfPhi\bfone_{\calH^\perp}=\bfzero$}.
In particular, the $s+1$ vectors $\set{\bfphi_\gamma}_{\gamma\in\calH^\perp}$ are linearly dependent,
and so Theorem~\ref{theorem.spark bound equality} implies $\set{\bfphi_\gamma}_{\gamma\in\calH^\perp}$ is a regular simplex.
In fact, this more generally implies that each coset of \smash{$\calH^\perp$} indexes a regular simplex:
for any $\gamma\in\Gamma$ and any $g\in\calG$,
\begin{equation*}
\sum_{\gamma'\in\gamma\calH^\perp}\bfphi_{\gamma'}(g)
=\sum_{\gamma'\in\gamma\calH^\perp}\gamma'(g)
=\sum_{\gamma''\in\calH^\perp}(\gamma\gamma'')(g)
=\gamma(g)\sum_{\gamma''\in\calH^\perp}\gamma''(g)
=\gamma(g)0
=0,
\end{equation*}
meaning $\set{\bfphi_{\gamma'}}_{\gamma'\in\gamma\calH^\perp}$ is linearly dependent and is thus a regular simplex.
In particular, since $\Gamma$ is a disjoint union of $v$ cosets of $\calH^\perp$,
$\set{\bfphi_\gamma}_{\gamma\in\Gamma}$ is a disjoint union of regular simplices.
\end{proof}

We emphasize that a harmonic ETF may contain more simplices than those given by this result: it only provides sufficient conditions for the vectors indexed by cosets of $\calH^\perp$ to form a simplex.
For example,
$\calD=\set{(0,0,1,0),(0,1,1,0),(0,0,0,1),(1,0,0,1),(0,0,1,1),(1,1,1,1)}$ is a McFarland difference set in $\calG=\bbZ_2^4$,
and yields an ETF $\set{\bfphi_j}_{j=1}^{16}$ for $\bbR^6$, namely the columns of:
\begin{equation}
\label{equation.6x16 harmonic ETF}
\bfPhi=\left[\begin{array}{rrrrrrrrrrrrrrrr}
1& 1& 1& 1&-1&-1&-1&-1& 1& 1& 1& 1&-1&-1&-1&-1\\
1& 1&-1&-1&-1&-1& 1& 1& 1& 1&-1&-1&-1&-1& 1& 1\\
1& 1& 1& 1& 1& 1& 1& 1&-1&-1&-1&-1&-1&-1&-1&-1\\
1&-1& 1&-1& 1&-1& 1&-1&-1& 1&-1& 1&-1& 1&-1& 1\\
1& 1& 1& 1&-1&-1&-1&-1&-1&-1&-1&-1& 1& 1& 1& 1\\
1&-1&-1& 1&-1& 1& 1&-1&-1& 1& 1&-1& 1&-1&-1& 1
\end{array}\right].
\end{equation}
Here, the inverse Welch bound is $s=3$, and there is a natural subgroup of $\bbZ_2^4$ of order $v=4$ that is disjoint from $\calD$, namely $\calH=\set{(0,0,0,0),(0,0,1,0),(0,0,0,1),(0,0,0,1)}$.
As such, Theorem~\ref{theorem.harmonic ETFs} implies that $\set{\bfphi_j}_{j=1}^{16}$ is a disjoint union of $4$ regular simplices.
(This also follows from~\cite{JasperMF14}, which gives that~\eqref{equation.6x16 harmonic ETF} is equivalent to a Steiner ETF.)
Applying BinderFinder to this ETF reveals that its binder is a $\BIBD(16,4,3)$, meaning it contains exactly $60$ regular simplices.
In some other cases, the result of Theorem~\ref{theorem.harmonic ETFs} is exact:
$\calD=\set{1,2,3,4,8,11,12,14}$ is a difference set in $\calG=\bbZ_{15}$,
and is disjoint from the subgroup $\calH=\set{0,5,10}$ of $\calG$ of order $v=3$,
meaning the corresponding harmonic ETF~\eqref{equation.8x15 ETF} is a disjoint union of $3$ regular simplices.
Applying BinderFinder reveals its binder to be~\eqref{equation.8x15 binder}, meaning these are the only regular simplices it contains.

We now apply Theorem~\ref{theorem.harmonic ETFs} to several known families of difference sets.
Here, we note that since the complement $\calD^\rmc$ of a difference set is another difference set,
we can alternatively interpret this result as saying that whenever a difference set of $\calG$ contains a subgroup of $\calG$ of appropriate order, then its Naimark complements are disjoint unions of regular simplices.

\begin{example}
Theorem~\ref{theorem.harmonic ETFs} applies to the complements of certain \textit{Singer} difference sets.
To be precise, for any prime power $q$ and integer $e\geq 2$,
let $\calG=\bbF_{\smash{q^{e+1}}}^\times/\bbF_q^\times$ be the cyclic group of order \smash{$n=\frac{q^{e+1}-1}{q-1}$}
obtained by taking the quotient of the multiplicative group of $\bbF_{q^{e+1}}$ over the multiplicative group of the base field $\bbF_q$.
A Singer difference set in $\calG$ is a set of the form $\calE=\set{\gamma\bbF_q^\times: \gamma\in\calT}$ where $\calT$ is any hyperplane of $\bbF_{q^{e+1}}$, that is, any $e$-dimensional subspace of the $(e+1)$-dimensional vector space $\bbF_{q^{e+1}}$ over $\bbF_q$.
Such difference sets can be equivalently expressed as \smash{$\set{i\in\bbZ_n: \tr(\alpha^{i+j})=0}$} where $\alpha$ is a primitive element of $\bbF_{q^{e+1}}$ and
$\tr:\bbF_{q^{e+1}}\rightarrow\bbF_q$ is the field trace~\cite{JungnickelPS07}.
Here, $j\in\bbZ_n$ is arbitrary and corresponds to a particular choice of hyperplane $\calT$.
The complement $\calD=\calE^\rmc$ of a Singer difference set has \smash{$d=\abs{\calD}=\abs{\calG}-\abs{\calE}=\frac{q^{e+1}-1}{q-1}-\frac{q^e-1}{q-1}=q^e$} elements,
implying the inverse Welch bound~\eqref{equation.inverse Welch bound} is $s=q^{\frac{e+1}2}$.
In particular, $s+1$ is a positive integer whenever $e$ is odd.
As such, from this point forward we assume $e=2f-1$ for some $f\geq 2$.
In order to apply Theorem~\ref{theorem.harmonic ETFs}, we need $\calE=\calD^c$ to contain a subgroup $\calH$ of $\calG$ of order
\begin{equation*}
v
=\tfrac{\abs{\calG}}{s+1}
=\tfrac{q^{2f}-1}{q-1}\tfrac1{q^f+1}
=\tfrac{q^f-1}{q-1}.
\end{equation*}
Since $\calG$ is a cyclic, it has exactly one such subgroup $\calH$ of this order.
In fact, since $f$ divides $2f$, $\bbF_{q^{2f}}$ contains $\bbF_{q^f}$ as a subfield,
meaning this subgroup is \smash{$\calH=\bbF_{\smash{q^f}}^\times/\bbF_{\smash{q}}^\times$}.
Moreover, this subgroup $\calH$ is contained in $\calE$ whenever its defining hyperplane $\calT$ is chosen to be any one of the several $(2f-1)$-dimensional subspaces of $\bbF_{q^{2f}}$ that contains the $f$-dimensional subspace $\bbF_{q^f}$.
In particular, we can always choose $j$ such that \smash{$\calE=\set{i\in\bbZ_n: \tr(\alpha^{i+j})=0}$} contains the subgroup of $\bbZ_n$ of order \smash{$\frac{q^f-1}{q-1}$}.

This means the complementary difference set yields a harmonic ETF of \smash{$n=\frac{q^{2f}-1}{q-1}$} vectors for \smash{$\bbC^d$} where $d=q^{2f-1}$,
where this ETF is a disjoint union of
\smash{$v=\abs{\calH}=\tfrac{q^f-1}{q-1}$} regular $s$-simplices where $s=q^f$.
These ETFs are not equivalent to Steiner ETFs since they have redundancy
\begin{equation*}
\tfrac{n}{d}
=\tfrac{q^{2f}-1}{q-1}\tfrac1{q^{2f-1}}
=1+\tfrac{q^{2f-1}-1}{q-1}\tfrac1{q^{2f-1}}
<1+\tfrac1{q-1}
\leq 2,
\end{equation*}
whereas the redundancy~\eqref{equation.redundancy of Steiner ETF} of a Steiner ETF is greater than two.

For example, when $q=3$ and $f=2$, $x^4+x+2$ is a primitive polynomial over $\bbF_3$~\cite{HansenM92}, meaning $\alpha$ generates the multiplicative group of
$\bbF_{81}
=\set{a+b\alpha+c\alpha^2+d\alpha^3: a,b,c,d\in\bbF_3,\, \alpha^4=2\alpha+1}$.
Here, the canonical hyperplane $\set{\gamma\in\bbF_{81}:0=\tr(\gamma)=\gamma+\gamma^3+\gamma^9+\gamma^{27}}$ can be computed to be
\begin{equation*}
\set{0}\cup\set{\alpha^i: i=1,2,3,5,6,9,14,15,18,20,25,27,35,41,42,43,45,46,49,54,55,58,60,65,67,75}.
\end{equation*}
(Here, one can compute $\tr(\alpha^i)$ for $i=0,1,2,3$ by taking the usual trace of $\bfA^i$ where $\bfA$ is the $4\times 4$ companion matrix of $x^4+x+2$ over $\bbF_3$.
This initializes the recursion $\tr(\alpha^{i})=2\tr(\alpha^{i-3})+\tr(\alpha^{i-4})$ that arises from $\alpha^4=2\alpha+1$.)
Removing $0$ from this hyperplane and identifying the remaining elements modulo $\bbF_3^\times=\set{1,-1}=\set{1,\alpha^{40}}$ gives the following \smash{$\frac{q^{2f-1}-1}{q-1}=13$}-element Singer difference set in \smash{$\bbZ_{40}\cong\bbF_{81}^\times/\bbF_{3}^\times$}:
\begin{equation*}
\set{1,2,3,5,6,9,14,15,18,20,25,27,35}.
\end{equation*}
The $40$ cyclic translates of this set correspond to the $40$ distinct hyperplanes in $\bbF_{81}$.
To apply Theorem~\ref{theorem.harmonic ETFs}, we want one of these translates to contain the subgroup of $\bbZ_{40}$ of order \smash{$\frac{q^f-1}{q-1}=4$},
namely $\set{0,10,20,30}$.
For example, translating by $-5$ gives the alternative Singer difference set
\begin{equation*}
\set{0,1,4,9,10,13,15,20,22,30,36,37,38}.
\end{equation*}
(We could have also translated by $-15$, $-25$ or $-35$.)
This is guaranteed to be possible since there are several $(2f-1)=3$-dimensional subspaces of $\bbF_{81}$ that contain the $f=2$-dimensional subspace
$\bbF_9=\set{0}\cup\set{\alpha^i:i=0,10,20,30,40,50,60,70}$.
Taking the remaining $27$ rows of the $40\times40$ discrete Fourier transform gives an ETF of $40$ vectors for $\bbC^{27}$ that is a disjoint union of four $9$-simplices.
Meanwhile,
applying a similar rationale in the case where $q=2$ and $f=2$ leads to~\eqref{equation.8x15 ETF},
which is a disjoint union of $3$ regular $4$-simplices.
\end{example}

\begin{example}
Theorem~\ref{theorem.harmonic ETFs} applies to \textit{McFarland} difference sets.
To elaborate, for any prime power $q$ and any positive integer $e$,
let \smash{$\set{\calU_i: i=1,\dotsc,\frac{q^{e+1}-1}{q-1}}$} denote the distinct hyperplanes of the vector space $\bbF_q^{e+1}$,
and let \smash{$\calT=\set{t_i: i=0,\dotsc,\frac{q^{e+1}-1}{q-1}}$} be any abelian group of order \smash{$\frac{q^{e+1}-1}{q-1}+1$}.
A McFarland difference set~\cite{JungnickelPS07} for the group $\calG=\calT\times\bbF_q^{e+1}$ is any set of the form
\begin{equation*}
\calH=\set{(t,u)\in\calG: t=t_i,\, u\in\calU_i\text{ for some }i=1,\dotsc,\tfrac{q^{e+1}-1}{q-1}}.
\end{equation*}
Here, the cardinalities of $\calD$ and $\calG$ are
\begin{equation*}
d=\abs{\calD}=q^e\bigparen{\tfrac{q^{e+1}-1}{q-1}},
\quad
n=\abs{\calG}=q^{e+1}\bigparen{1+\tfrac{q^{e+1}-1}{q-1}},
\end{equation*}
respectively.
At this point, a direct calculation reveals that $s=\tfrac{q^{e+1}-1}{q-1}$.
To apply Theorem~\ref{theorem.harmonic ETFs}, we thus need $\calD$ to be disjoint from a subgroup $\calH$ of $\calG$ of order
\begin{equation*}
v
=\tfrac{\abs{G}}{s+1}
=\tfrac1{\abs{\calT}}\abs{\calT\times\bbF_q^{e+1}}
=\abs{\bbF_q^{e+1}}
=q^{e+1}.
\end{equation*}
Here, a natural choice is to let $\calH=\set{0}\times\bbF_q^{e+1}$.
This subgroup is indeed disjoint from $\calD$ provided we enumerate $\calT$ so that $t_0=0$.
In this case, Theorem~\ref{theorem.harmonic ETFs} guarantees that the corresponding harmonic ETF is a union of simplices.
This is not a surprise, since every harmonic ETF arising from a McFarland difference set is known to be equivalent to a Steiner ETF~\cite{JasperMF14}.
For a specific example of these ideas, letting $q=2$ and $e=1$ leads to~\eqref{equation.6x16 harmonic ETF}, which is a disjoint union of $4$ regular $3$-simplices.
\end{example}

\begin{example}
Theorem~\ref{theorem.harmonic ETFs} applies to the complements of \textit{twin prime power} difference sets.
Here, for any odd prime power $q$ such that $q+2$ is also a prime power, let $\calG=(\bbF_q,+)\times(\bbF_{q+2},+)$,
which is a cyclic group of order $n=q(q+2)$.
The twin prime power difference set $\calE$ for $\calG$ is a set of order $\frac12(q^2+2q-1)$ that consists of those $(x,y)$ such that either (i) $y=0$ or (ii) $x$ and $y$ are both nonzero squares or (iii) $x$ and $y$ are both nonsquares~\cite{JungnickelPS07}.
Its complement $\calD$ is a difference set for $\calG$ of order $d=\abs{\calD}=\abs{\calG}-\abs{\calE}=q(q+2)-\frac12(q^2+2q-1)=\frac12(q+1)^2$,
implying its inverse Welch bound~\eqref{equation.generalized Welch bound} is $s=q+1$.
To apply Theorem~\ref{theorem.harmonic ETFs} we need $\calE$ to contain a subgroup $\calH$ of $\calG$ of order \smash{$v=\frac{\abs{\calG}}{s+1}=\frac{q(q+2)}{q+2}=q$}.
This suggests taking $\calH=\bbF_q\times\set{0}$, which is indeed contained in $\calE$.
As such, the harmonic ETF arising from the complement of a twin prime power difference set is a disjoint union of regular simplices.
These ETFs are not Steiner ETFs since $\frac nd=2\frac{q(q+2)}{(q+1)^2}=2[1-\frac1{(q+1)^2}]<2$.
\end{example}

We summarize these examples as follows:

\begin{theorem}
\label{theorem.instances of harmonic ETFs}
For any prime power $q$,
there exists an $n$-vector harmonic ETF for $\bbC^d$ that is a disjoint union of $v$ regular $s$-simplices whenever:
\begin{enumerate}
\renewcommand{\labelenumi}{(\alph{enumi})}
\item\
\smash{$d=q^{2f-1}$},
\smash{$n=\frac{q^{2f}-1}{q-1}$},
\smash{$v=\frac{q^f-1}{q-1}$} and
\smash{$s=q^f$} where $f\geq 2$;\smallskip
\item\
\smash{$d=q^e\bigparen{\tfrac{q^{e+1}-1}{q-1}}$},
\smash{$n=q^{e+1}\bigparen{1+\tfrac{q^{e+1}-1}{q-1}}$},
\smash{$v=q^{e+1}$} and
\smash{$s=\tfrac{q^{e+1}-1}{q-1}$} where $e\geq 1$;\smallskip
\item\
$d=\frac12(q+1)^2$,
$n=q(q+2)$,
$v=q$ and
$s=q+1$ provided $q+2$ is an odd prime power.
\end{enumerate}
\end{theorem}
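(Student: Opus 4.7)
The plan is to apply Theorem~\ref{theorem.harmonic ETFs} three times, once for each family. In each case the task is to exhibit a difference set $\calD$ in an appropriate abelian group $\calG$ together with a subgroup $\calH \leq \calG$ of order $v = \abs{\calG}/(s+1)$ that is disjoint from $\calD$. The parameter identities in (a), (b), (c) are verified by direct arithmetic from $d = \abs{\calD}$, $n = \abs{\calG}$, and the inverse Welch bound formula~\eqref{equation.inverse Welch bound}; these are the computations carried out in the three examples preceding the theorem.

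For (a), I would take $\calD$ to be the complement in the cyclic group $\calG = \bbF_{q^{2f}}^\times/\bbF_q^\times$ of a Singer difference set. Since $f$ divides $2f$, the subfield $\bbF_{q^f}$ is an $f$-dimensional $\bbF_q$-subspace of $\bbF_{q^{2f}}$, so there exist hyperplanes $\calT$ of $\bbF_{q^{2f}}$ containing $\bbF_{q^f}$. For any such $\calT$, the resulting Singer difference set contains the cyclic subgroup $\calH := \bbF_{q^f}^\times/\bbF_q^\times$, which has order $\tfrac{q^f-1}{q-1} = v$, so its complement $\calD$ is disjoint from $\calH$. For (b), I would use a McFarland difference set $\calD$ in $\calG = \calT \times \bbF_q^{e+1}$, enumerating the hyperplane index set so that $t_0 = 0$. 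Then $\calH := \set{0} \times \bbF_q^{e+1}$ has order $q^{e+1} = v$, and because the McFarland construction uses only indices $t_i$ with $i \geq 1$, it meets $\calH$ trivially. For (c), I would take $\calD$ to be the complement in $\calG = \bbF_q \times \bbF_{q+2}$ of a twin prime power difference set $\calE$. Since $(x,0) \in \calE$ for every $x \in \bbF_q$ by definition, the subgroup $\calH := \bbF_q \times \set{0}$ of order $q = v$ is contained in $\calE$, and hence is disjoint from $\calD$.

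In all three cases, Theorem~\ref{theorem.harmonic ETFs} then immediately yields that the harmonic ETF arising from $\calD$ is a disjoint union of $v$ regular $s$-simplices, giving the claimed parameters. There is no real obstacle beyond bookkeeping: the substance lies in Theorem~\ref{theorem.harmonic ETFs}, and the theorem at hand merely records the three concrete families of difference sets for which the required subgroup is available. The one place where care is needed is in matching $v$ to the order of the natural subgroup: verifying that $\tfrac{q^f-1}{q-1}$, $q^{e+1}$, and $q$ really equal $\abs{\calG}/(s+1)$ in the three respective settings is what pins down the admissible parameter ranges (in particular, forcing $e = 2f-1$ to be odd in (a) so that $s+1 = q^f+1$ divides $n$).
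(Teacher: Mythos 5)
Your proposal is correct and follows essentially the same route as the paper: the theorem is proved there by the three worked examples preceding it, which exhibit exactly the difference sets and disjoint subgroups you describe (the Singer complement with the hyperplane chosen to contain $\bbF_{q^f}$, the McFarland set with $t_0=0$ so that $\set{0}\times\bbF_q^{e+1}$ is avoided, and the twin prime power complement avoiding $\bbF_q\times\set{0}$) and then invoke Theorem~\ref{theorem.harmonic ETFs}. The parameter bookkeeping you outline matches the paper's computations.
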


Applying Theorem~\ref{theorem.equichordal} to any one of these ETFs gives an ECTFF.
Meanwhile, applying Theorem~\ref{theorem.Naimark complement of union of simplices} gives a Naimark complement for it that is a direct sum of the $s+1$ copies of the standard basis for $\bbF^v$ with a biangular tight frame that itself is a disjoint union of regular simplices.
In fact, a careful read of the proofs of Theorems~\ref{theorem.equichordal} and~\ref{theorem.harmonic ETFs} reveals that we can alternatively partition $\calG$ into $\calH$,
$\calD$ and $\calE\cap\calH^\rmc$, and that the corresponding $v$, $d$ and $vs-d$ rows of the character table of $\calG$ yield complementary harmonic tight frames:
the first of these frames consists of $s+1$ copies of the discrete Fourier basis for $\calH$, while the second and third are equiangular and biangular tight frames for $\bbF^d$ and $\bbF^{vs-d}$, respectively, each consisting of a disjoint union of $v$ regular $s$-simplices.

In the special case where $f=2$, the Naimark complement of the ETF in Theorem~\ref{theorem.instances of harmonic ETFs}(a) is of the
form~\eqref{equation.Naimark complement of union of simplices} where
\smash{$\set{\bfpsi_{i,j}}_{i=1,}^{v}\,_{j=1}^{s+1}$} is a disjoint union of $s$-simplices in $\bbF^{vs-d}$ where $vs-d=q^2=s$.
That is, in this case, \smash{$\set{\bfpsi_{i,j}}_{i=1,}^{v}\,_{j=1}^{s+1}$} is a biangular tight frame that consists of $v=q+1$ regular $s$-simplices for $\bbF^s$ where $s=q^2$.
These vectors thus form a ``regular simplex" analog of a mutually unbiased basis, which is a biangular tight frame that consists of orthonormal bases for $\bbF^s$.

In summary, though a harmonic ETF need not contain a regular simplex in general,
there are nevertheless a few infinite families of them that do,
and in fact are a disjoint union of regular simplices.
Less obvious is whether any of these infinite families of harmonic ETFs have the property that their binders contain a BIBD.
This is clearly not true in general since for certain harmonic ETFs, there is exactly one subgroup $\calH$ that satisfies the hypotheses of Theorem~\ref{theorem.harmonic ETFs},
and the only regular simplices it contains correspond to the cosets of $\calH$.
It turns out this happens, for example, for the harmonic ETF~\eqref{equation.8x15 binder} that arises from the complement of a Singer difference set,
as well as for the $45$-vector ETF for $\bbC^{12}$ that arises from the McFarland difference set with $q=3$ and $e=1$.
In fact, to date, the only harmonic ETFs whose binders contain BIBDs that we have discovered arise from McFarland difference sets in the special case where $q=2$.
The fact that these binders have this property is not surprising, since ETFs with these same parameters arise as the Naimark complements of phased $\BIBD(q^2,q,1)$ ETFs in the special case where $q=2^{e+1}$.
Because of this, we leave a deeper investigation of harmonic ETFs whose binders contain BIBDs for future research.

\subsection{Steiner ETFs}

As we have already seen, every Steiner ETF is by design a disjoint union of regular simplices.
In particular, they achieve equality in~\eqref{equation.spark bound}.
As mentioned earlier, applying Theorem~\ref{theorem.Naimark complement of union of simplices} to them refines a result of~\cite{FickusJMP19}, while applying Theorem~\ref{theorem.equichordal} to them recovers a known result of~\cite{Zauner99}.

What remains is the extent to which the binders of Steiner ETFs contain BIBDs.
As with harmonic ETFs, this issue seems complicated: every harmonic ETF arising from a McFarland difference set is equivalent to a Steiner ETF, and as we have already discussed, some McFarland ETFs with $q=2$ seem to have this property, while some with $q=3$ do not.
That said, our experimentation with BinderFinder indicates that one should not expect the binder of a Steiner ETF to contain a BIBD in general.

\subsection{ETFs from hyperovals}

Whenever $q$ is an even prime power, \cite{FickusMJ16} gives an ETF \smash{$\set{\bfphi_j}_{j=1}^{n}$} of $n=q(q^2+q-1)$ vectors in $\bbC^d$ where $d=q^2+q-1$.
The construction itself is a generalization of that used for Steiner ETFs,
and only applies when the underlying BIBD is a projective plane that contains a \textit{hyperoval}, namely a $\BIBD(q^2+q+1,q+1,1)$ whose vertex set contains a subset of $q+2$ points, no three collinear.
It turns out that such BIBDs can only exist when $q$ is even,
and are known to exist whenever $q$ is an even prime power.
The dual design of such a BIBD is a projective plane with the property that a subset of its vertices and blocks forms a $\BIBD(\tfrac12q(q-1),\tfrac12q,1)$.
By design, the first $n_0=\frac12q(q-1)(q+2)$ vectors in this ETF form a Steiner ETF for a subspace of $\bbC^d$ of dimension $d_0=q^2-1$ arising from this subdesign.
In particular, \smash{$\set{\bfphi_j}_{j=1}^{n_0}$} is a disjoint union of $v=\frac12q(q-1)$ regular $s$-simplices where $s=q+1$.
As such, \smash{$\set{\bfphi_j}_{j=1}^{n}$} contains regular simplices, and so achieves equality in~\eqref{equation.spark bound}.
That said,
\smash{$\tfrac{n}{s+1}=\tfrac{q(q^2+q-1)}{q+2}=q^2-q+1-\tfrac{2}{q+2}$}
is never an integer, and so \smash{$\set{\bfphi_j}_{j=1}^{n}$} is never a disjoint union of regular simplices.
It is unclear when the binder of such an ETF contains a BIBD, but this does occur at least once:
when $q=2$, the resulting $10$-vector ETF for $\bbC^5$ is equivalent to~\eqref{equation.5x10 ETF}, whose binder is a $\BIBD(10,4,2)$.

From the perspective of this paper,
a remarkable feature of this class of ETFs is that their existence is implied by the existence of other ETFs that happen to contain a regular simplex:

\begin{theorem}
\label{theorem.hyperoval}
Let $q\geq 2$ be an integer and suppose \smash{$\set{\bfpsi_j}_{j=1}^{q^2(q+2)}$} is an ETF for a space of dimension $q(q^2+q-1)$ that contains a regular simplex.
Then any Naimark complement of it contains a $q(q^2+q-1)$-vector ETF for a space of dimension $q^2+q-1$.
\end{theorem}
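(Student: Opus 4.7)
The plan is to take a Naimark complement of $\{\bfpsi_j\}_{j=1}^{n}$ and look at the subset indexed by the complement of the regular simplex provided by hypothesis, showing that this subset is itself an ETF with the desired parameters. To begin, I would set $n=q^2(q+2)$ and $d=q(q^2+q-1)$ and work out the key arithmetic: $n-d=q(q+1)$ and $\tfrac{d(n-1)}{n-d}=(q^2+q-1)^2$, so the inverse Welch bound~\eqref{equation.inverse Welch bound} of $\{\bfpsi_j\}_{j=1}^{n}$ is $s=q^2+q-1$. Crucially, $s+1=q(q+1)=n-d$, so writing the hypothesized regular $s$-simplex as $\{\bfpsi_j\}_{j\in\calK}$ forces $|\calK^\rmc|=n-(s+1)=d$, which already matches the target cardinality for the sub-ETF.

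Next I would take a Naimark complement $\{\bfphi_j\}_{j=1}^{n}$, which is then an ETF for a space of dimension $n-d=s+1$. After scaling so that $\norm{\bfpsi_j}^2=s$, the tight frame constant is $a=\tfrac{ns}{d}=q^2+2q$, and the hypothesis that $\{\bfpsi_j\}_{j\in\calK}$ is a regular $s$-simplex forces $\bfPsi_\calK^*\bfPsi_\calK^{}$ to have eigenvalue $s+1$ with multiplicity $s$ and eigenvalue $0$ with multiplicity $1$. Restricting the Naimark identity $\bfPsi^*\bfPsi+\bfPhi^*\bfPhi=a\bfI$ to the principal $\calK\times\calK$ block and using $a-(s+1)=q$, I find that $\bfPhi_\calK^*\bfPhi_\calK^{}$ has eigenvalues $q$ (with multiplicity $s$) and $a$ (with multiplicity $1$); these are also the eigenvalues of the $(s+1)\times(s+1)$ operator $\bfPhi_\calK^{}\bfPhi_\calK^*$ on $\Span\{\bfphi_j\}_{j=1}^{n}$.

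To conclude, I would use that $\bfPhi\bfPhi^*=\bfPhi_\calK^{}\bfPhi_\calK^*+\bfPhi_{\calK^\rmc}^{}\bfPhi_{\calK^\rmc}^*$ equals $a\bfI$ on the $(s+1)$-dimensional span of $\{\bfphi_j\}_{j=1}^{n}$, so that $\bfPhi_{\calK^\rmc}^{}\bfPhi_{\calK^\rmc}^*=a\bfI-\bfPhi_\calK^{}\bfPhi_\calK^*$ has eigenvalues $a-a=0$ (multiplicity $1$) and $a-q=s+1$ (multiplicity $s$). Hence $\{\bfphi_j\}_{j\in\calK^\rmc}$ is an $(s+1)$-tight frame for an $s$-dimensional subspace of the span, and since it inherits equiangularity as a subset of the ETF $\{\bfphi_j\}_{j=1}^{n}$, it is an ETF for that subspace, with the required $|\calK^\rmc|=d=q(q^2+q-1)$ vectors in a space of dimension $s=q^2+q-1$.

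The only real obstacle I foresee is the arithmetic: the argument depends on the numerical coincidences $s+1=n-d$ and $a-(s+1)=q$ so that the final eigenvalue subtraction yields exactly the spectrum $\{0,s+1\}$ of a tight frame. Once these identities are in hand, the rest is a direct double application of the Naimark relation---once in its $n\times n$ Gram form and once in its $(n-d)\times(n-d)$ frame-operator form---together with the characterization of the Gram matrix of a regular simplex from the proof of Theorem~\ref{theorem.triple products}.
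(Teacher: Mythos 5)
Your proposal is correct and follows essentially the same route as the paper: both use the Naimark relation once at the Gram level to identify the $\calK\times\calK$ block of the complement's Gram matrix (equivalently, that $\set{\bfphi_j}_{j\in\calK}$ spans the whole $(s+1)$-dimensional complement space), and once at the frame-operator level to show $\bfPhi_{\calK^\rmc}^{}\bfPhi_{\calK^\rmc}^*=a\bfI-\bfPhi_\calK^{}\bfPhi_\calK^*$ is a multiple of a projection, with equiangularity inherited from the ETF. The only difference is cosmetic: you argue with eigenvalue multiplicities, whereas the paper fixes an explicit self-adjoint representative $\bfPhi_0$ with $\bfPhi_0^2=q\bfI+\bfone\bfone^*$ and verifies tightness by squaring the resulting frame operator.
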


\begin{proof}
A direct computation reveals that the inverse Welch bound of \smash{$\set{\bfpsi_j}_{j=1}^{q^2(q+2)}$} is $q^2+q-1$.
Without loss of generality, let $\norm{\bfpsi_j}^2=q^2+q-1$ for all $j$, implying  $\ip{\bfpsi_j}{\bfpsi_{j'}}$ is unimodular for all $j\neq j'$,
and that the tight frame constant of \smash{$\set{\bfpsi_j}_{j=1}^{q^2(q+2)}$} is $a=q(q+2)$.
Without loss of generality, further assume that the regular simplex this ETF contains is its first $q(q+1)$ vectors, and that these vectors sum to zero.
In particular, by Theorem~\ref{theorem.triple products} we can assume the Gram matrix of \smash{$\set{\bfpsi_j}_{j=1}^{q^2+q}$} is
$\bfPsi_0^*\bfPsi_0^{}=q(q+1)\bfI-\bfone\bfone^*$,
where $\bfone$ is an all-ones vector of length $q(q+1)$.
As such,
the corresponding principal submatrix of the Gram matrix of any Naimark complement
\smash{$\set{\bfphi_j}_{j=1}^{q^2(q+2)}$} in $\bbF^{q(q+1)}$ of the ETF \smash{$\set{\bfpsi_j}_{j=1}^{q^2(q+2)}$} is
\begin{equation*}
\bfPhi_0^*\bfPhi_0^{}
=a\bfI-\bfPsi_0^*\bfPsi_0^{}
=q(q+2)\bfI-[q(q+1)\bfI-\bfone\bfone^*]
=q\bfI+\bfone\bfone^*.
\end{equation*}
Since this matrix is positive definite, the vectors \smash{$\set{\bfphi_j}_{j=1}^{q(q+1)}$} form a basis for $\bbF^{q(q+1)}$.
Moreover, since a Naimark complement is only unique up to unitary equivalence, we can without loss of generality take $\bfPhi_0$ to be a self-adjoint matrix that satisfies $\bfPhi_0^2=q\bfI+\bfone\bfone^*$:
\begin{equation*}
\bfPhi_0
=q^{-\frac12}\bigparen{q\bfI+\tfrac{-1\pm\sqrt{q+2}}{q+1}\bfone\bfone^*}.
\end{equation*}
As such, $\bfPhi_0\bfPhi_0^*=\bfPhi_0^2=q\bfI+\bfone\bfone^*$.
Letting $\bfPhi_1$ be the synthesis operator of \smash{$\set{\bfphi_j}_{j=q(q+1)+1}^{q^2(q+2)}$},
this means the frame operator of these vectors is
\begin{equation*}
\bfPhi_1^{}\bfPhi_1^*
=a\bfI-\bfPhi_0^{}\bfPhi_0^*
=q(q+2)\bfI-(q\bfI+\bfone\bfone^*)
=q(q+1)\bfI-\bfone\bfone^*.
\end{equation*}
Since
$(\bfPhi_1^{}\bfPhi_1^*)^2
=[q(q+1)\bfI-\bfone\bfone^*]^2
=q(q+1)[q(q+1)\bfI-\bfone\bfone^*]
=q(q+1)\bfPhi_1^{}\bfPhi_1^*$,
these vectors form a tight frame for their span, which necessarily has dimension \smash{$\frac{q(q^2+q-1)}{q(q+1)}\norm{\bfphi_j}^2=q^2+q-1$}.
Moreover, since these vectors are a subset of the ETF~\smash{$\set{\bfphi_j}_{j=1}^{q^2(q+2)}$},
they themselves are equiangular, meaning they form an ETF for their $(q^2+q-1)$-dimensional span.
\end{proof}

This result can be interpreted as a partial converse of a special case of a result of~\cite{FickusMJ16}.
Specifically, if there exist vectors \smash{$\set{\bfphi_j}_{j=1}^{q(q^2+q-1)}$} with all unimodular entries that form an ETF for
$\bfone^\perp=\set{\bfy\in\bbF^{q(q+1)}: \ip{\bfone}{\bfy}=0}$,
then Theorem~4 of~\cite{FickusMJ16} implies that the $q^2(q+2)$ columns of
\begin{equation*}
\left[\begin{array}{cc}q\bfI+\frac{\sqrt{q+2}-1}{q+1}\bfone\bfone^*&\ \bfPhi\end{array}\right]
\end{equation*}
form an ETF for $\bbF^{q(q+1)}$, and it is straightforward to verify that the first $q(q+1)$ vectors of any Naimark complement \smash{$\set{\bfpsi_j}_{j=1}^{q^2(q+2)}$} of this ETF form a regular simplex.

Theorem~\ref{theorem.hyperoval} suggests an approach for constructing ETFs of $n=q(q^2+q-1)$ vectors for $\bbF^d$ where $d=q^2+q-1$ that is alternative to that of~\cite{FickusMJ16}:
simply find an ETF of $q^2(q+2)$ vectors for a space of dimension $q(q+1)$ whose Naimark complement contains a regular simplex.
For example, to construct a $76$-vector ETF for $\bbC^{19}$, we simply need a $96$-vector ETF for $\bbC^{20}$ whose Naimark complement contains a regular simplex.
The techniques of~\cite{FickusMJ16} show that such ETFs necessarily exist,
while~\cite{AzarijaM15,AzarijaM16} show that such ETFs are necessarily complex.
Moreover, there is a well-known $96$-vector harmonic ETF for $\bbC^{20}$ that corresponds to a McFarland difference set with $q=4$ and $e=1$.
Remarkably however, applying BinderFinder to the Naimark complement of that ETF---a process which takes our current Matlab implementation~\cite{FickusJKM17} around an hour to run on current desktop computers---reveals that it contains no regular simplices.
There are thus at least two nonequivalent $96$-vector ETFs for $\bbC^{20}$:
one that contains a $76$-vector ETF for $\bbC^{19}$ and whose Naimark complements contain a regular simplex, and a harmonic ETF with these same parameters that does neither of these things.
We leave a deeper investigation of these ideas for future work.

\subsection{Tremain ETFs}

A $\BIBD(v,3,1)$ is known as a \textit{Steiner triple system},
and such a BIBD is known to exist if and only if $v \equiv 1,3 \bmod 6$ and $v\geq 7$.
As shown in~\cite{FickusJMP18}, every Steiner ETF arising from such a BIBD can be modified to produce an ETF of $n=\frac12(v+1)(v+2)$ vectors for a space of dimension $d=\frac16(v+2)(v+3)$.
Here as before, the underlying Steiner ETF is \smash{$\set{\bfE_j\bff_j}_{i=1}^{v}\,_{j=1}^{r+1}$}
where $\set{\bff_j}_{j=1}^{r+1}$ is a unimodular (flat) regular simplex in $\bbF^r$,
and each $\bfE_i$ is a $b\times r$ embedding matrix arising from the corresponding column of the BIBD's $b\times v$ incidence matrix $\bfX$.
Here, since $k=3$ we have $r=\frac12(v-1)$ and $b=\frac16v(v-1)$.
The corresponding Tremain ETF is then the vectors
\begin{equation}
\label{equation.definition of Tremain ETF}
\set{\bfE_i\bff_j\oplus\sqrt{2}\bfdelta_i\oplus0}_{i=1,}^{v}\,_{j=1}^{r+1}
\cup\set{\bfzero\oplus\tfrac1{\sqrt{2}}\bfg_j\oplus\tfrac{\sqrt{3}}{\sqrt{2}}c_j}_{j=1}^{v+1}
\end{equation}
in $\bbF^b\oplus\bbF^v\oplus\bbF$,
where $\set{\bfdelta_i}_{i=1}^{v}$ is the standard basis in $\bbF^v$,
\smash{$\set{\bfg_j}_{j=1}^{v+1}$} is a unimodular regular simplex for $\bbF^{v}$,
and $\set{c_j}_{j=1}^{v+1}$ is a unimodular sequence of scalars that is a Naimark complement for it~\cite{FickusJMP18}.

A direct computation reveals the inverse Welch bound~\eqref{equation.inverse Welch bound} of a Tremain ETF is always the integer $s=\frac12(v+3)=r+2$.
That said, a Tremain ETF need not contain any regular simplices.
Indeed, as detailed in~\cite{FickusJMP18},
when constructed with a \textit{Butson-type} Hadamard matrix of the appropriate parameters,
its inner products can be $p$th roots of unity where $p$ is odd,
at which point Corollary~\ref{corollary.empty triples} implies it has an empty binder.
Other Tremain ETFs do contain regular simplices.
For example, there is a $36$-vector Tremain ETF for $\bbR^{15}$ whose binder is a $\BIBD(36,6,8)$.
Because of this, it seems difficult to determine in general whether a Tremain ETF contains a regular simplex.
That said, since
\begin{equation*}
\tfrac{n}{s+1}
=\tfrac{(v+1)(v+2)}{v+5}=v-2+\tfrac{12}{v+5}
\end{equation*}
we can say in general that a Tremain ETF can only be a disjoint union of regular simplices in the special case where $v=7$.
Moreover, in general we have the following result:

\begin{theorem}
The Naimark complement of any Tremain ETF contains a regular simplex.
\end{theorem}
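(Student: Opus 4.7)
The plan is to produce a regular simplex of size $v+1$ directly inside the Naimark complement, by reading off a sparse linear dependence from the cross-frame operator identity and the block structure of the Tremain construction.

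First, I compute the inverse Welch bound of the Naimark complement. A Tremain ETF has $n=\tfrac12(v+1)(v+2)$ vectors in dimension $d=\tfrac16(v+2)(v+3)$, so any Naimark complement $\set{\bfpsi_j}_{j=1}^{n}$ lives in dimension $n-d=\tfrac13v(v+2)$. A brief arithmetic check shows that
\begin{equation*}
\bigbracket{\tfrac{(n-d)(n-1)}{d}}^{\frac12}
=\bigbracket{\tfrac{\tfrac13v(v+2)\cdot\tfrac12v(v+3)}{\tfrac16(v+2)(v+3)}}^{\frac12}
=v,
\end{equation*}
so the Naimark complement has inverse Welch bound $s'=v$. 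By Theorem~\ref{theorem.spark bound equality} applied to the Naimark complement, it thus suffices to exhibit $v+1$ linearly dependent vectors among the $\bfpsi_j$.

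Next, I exploit \eqref{equation.cross frame operator of Naimark complements}: taking adjoints of $\bfPhi\bfPsi^*=\bfzero$ gives $\bfPsi\bfPhi^*=\bfzero$, so for any $\bfy$ in the ambient Hilbert space $\bbF^b\oplus\bbF^v\oplus\bbF$ of the Tremain ETF,
\begin{equation*}
\bfzero=\bfPsi\bfPhi^*\bfy=\sum_{j=1}^{n}\ip{\bfphi_j}{\bfy}\,\bfpsi_j.
\end{equation*}
So any $\bfy$ such that the scalars $\ip{\bfphi_j}{\bfy}$ vanish on exactly $n-(v+1)$ indices produces a nontrivial linear dependence among $v+1$ of the $\bfpsi_j$, and finishes the proof via Theorem~\ref{theorem.spark bound equality}.

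Finally, I specialize $\bfy$ to be the standard basis vector of the final scalar factor $\bbF$ in $\bbF^b\oplus\bbF^v\oplus\bbF$. Reading off \eqref{equation.definition of Tremain ETF}, the last coordinate of $\bfphi_j$ is $0$ for every Steiner-part vector $\bfE_i\bff_j\oplus\sqrt{2}\bfdelta_i\oplus 0$, while for each simplex-part vector $\bfzero\oplus\tfrac1{\sqrt{2}}\bfg_j\oplus\tfrac{\sqrt{3}}{\sqrt{2}}c_j$ it equals $\tfrac{\sqrt{3}}{\sqrt{2}}c_j$, which is nonzero since the $c_j$ are unimodular. Thus $\ip{\bfphi_j}{\bfy}$ is supported on exactly the $v+1$ simplex-part indices, yielding the required linear dependence. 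The main ``obstacle'' is really just the parameter check $s'=v$; once that is in hand, the Tremain ETF already displays the desired sparse row as the one corresponding to its scalar ``tail.''
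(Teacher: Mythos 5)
Your proposal is correct, but it takes a different route from the paper. The paper proves the statement by a direct Gram-matrix computation: after normalizing so that $c_j=1$ for all $j$, the Gram matrix of the last $v+1$ Tremain vectors is $\tfrac12\bfG^*\bfG+\tfrac32\bfone\bfone^*=\tfrac{v+1}2\bfI+\bfone\bfone^*$, and subtracting this from $a\bfI$ with $a=\tfrac32(v+1)$ gives $(v+1)\bfI-\bfone\bfone^*$, which is recognized as the Gram matrix of a regular $v$-simplex. You instead verify the parameter identity $s'=v$ for the Naimark complement and then invoke Theorem~\ref{theorem.spark bound equality}, producing the needed linear dependence from $\bfPsi\bfPhi^*=\bfzero$ applied to the last standard basis vector of $\bbF^b\oplus\bbF^v\oplus\bbF$: since only the $v+1$ simplex-part columns of~\eqref{equation.definition of Tremain ETF} have a nonzero final coordinate (namely $\tfrac{\sqrt3}{\sqrt2}c_j$ with $c_j$ unimodular), the corresponding $v+1$ Naimark-complement vectors are linearly dependent and hence form a regular simplex. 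Both arguments single out the same $v+1$ vectors; yours is essentially the mechanism used in the proof of Theorem~\ref{theorem.Naimark complement of phased BIBD ETF} (a sparse row of the synthesis operator, of support size $s'+1$, indicates a simplex in the Naimark complement), and it generalizes immediately to any such sparse row, while the paper's computation additionally delivers the explicit Gram matrix $(v+1)\bfI-\bfone\bfone^*$, i.e., the precise normalization of the simplex. One small point worth stating explicitly in your write-up: $\bfPhi^*$ in $\bfPsi\bfPhi^*=\bfzero$ is the analysis operator on the ambient space $\bbF^b\oplus\bbF^v\oplus\bbF$; this is harmless here because $b+v+1=\tfrac16(v+2)(v+3)=d$, so the Tremain ETF spans that whole space (and the paper's derivation of~\eqref{equation.cross frame operator of Naimark complements} accommodates the extended analysis operator in any case).
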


\begin{proof}
A Naimark complement of any Tremain ETF consists of $\frac12(v+1)(v+2)$ vectors in a space of dimension $\frac13v(v+2)$, meaning its inverse Welch bound is $v$.
We show that the last $v+1$ vectors in any Naimark complement of~\eqref{equation.definition of Tremain ETF} form a regular simplex.
Here, by multiplying by unimodular scalars if necessary, we assume without loss of generality that $c_j=1$ for all $j=1,\dotsc,v$,
meaning the Gram matrix of $\set{\bfg_j}_{j=1}^{v+1}$ is $\bfG^*\bfG=(v+1)\bfI-\bfone\bfone^*$.
As such, the Gram matrix of the last $v+1$ vectors in~\eqref{equation.definition of Tremain ETF} is
\begin{equation*}
\tfrac12\bfG^*\bfG+\tfrac32\bfone\bfone^*
=\tfrac12[(v+1)\bfI-\bfone\bfone^*]+\tfrac32\bfone\bfone^*
=\tfrac{v+1}2\bfone+\bfone\bfone^*.
\end{equation*}
Moreover, the tight frame constant of~\eqref{equation.definition of Tremain ETF} is
$a=\frac{n}{d}\norm{\bfphi_j}^2=\frac32(v+1)$.
Together, these facts imply that the Gram matrix of the last $v+1$ vectors in any Naimark complement of~\eqref{equation.definition of Tremain ETF} is
\begin{equation*}
a\bfI-(\tfrac12\bfG^*\bfG+\tfrac32\bfone\bfone^*)
=\tfrac{3(v+1)}2\bfI-(\tfrac{v+1}2\bfone+\bfone\bfone^*)
=(v+1)\bfI-\bfone\bfone^*,
\end{equation*}
meaning these vectors indeed form a regular $v$-simplex.
\end{proof}

Despite this fact, the Naimark complement of a Tremain ETF is never a disjoint union of regular simplices, since $v$ is necessarily odd, implying $v+1$ does not divide $n=\frac12(v+1)(v+2)$.
In general, we do not know when the binder of the Naimark complement of a Tremain ETF contains a BIBD.
In at least one case, it does:
computing the binder of the Naimark complement of the aforementioned $36$-vector Tremain ETF for $\bbR^{15}$ with BinderFinder, we find that it is a $\BIBD(36,8,6)$.
We leave a deeper investigation of these ETFs for future research.

\subsection{Phased BIBD ETFs}

As evidenced by Theorems~\ref{theorem.phased binder} and~\ref{theorem.Naimark complement of phased BIBD ETF}, we can say a lot about the regular simplices contained in the Naimark complement of a phased BIBD ETF.
In particular, Theorem~\ref{theorem.Naimark complement of phased BIBD ETF} gives that each row of a phased BIBD ETF indicates a regular simplex in its Naimark complements.
As such, the binder of the Naimark complement of any phased BIBD ETF indeed contains that BIBD.

Sometimes, these BIBDs contain a \textit{parallel class}, that is, a subcollection of blocks that form a partition for $[n]$.
In such cases, its Naimark complements are disjoint unions of regular simplices.
Not every binder has this property: for example, the binder~\eqref{equation.5x10 binder}  of~\eqref{equation.5x10 ETF} is a $\BIBD(10,4,2)$, but does not contain a parallel class since $4$ does not divide $10$.
That said,
as explained in~\cite{FickusJMPW19},
the phased $\BIBD(q^2,q,1)$ ETFs and phased $\BIBD(q^3+1,q+1,1)$ ETFs constructed there do always contain a parallel class.
In particular, whenever $q$ is an odd prime power,
there is a real phased $\BIBD(q^3+1,q+1,1)$ ETF whose underlying BIBD contains a parallel class,
meaning any one of its Naimark complements is a disjoint union of \smash{$\frac{q^3+1}{q+1}=q^2-q+1$} regular $q$-simplices.
Applying Theorem~\ref{theorem.equichordal} to this ETF produces an ECTFF for $\bbR^{q^2-q+1}$ consisting of $q^2-q+1$ subspaces of dimension $q$.

Interestingly, the existence of such a real ECTFF also follows from applying the techniques of~\cite{Zauner99} to a projective plane of order $q-1$, namely a $\BIBD(q^2-q+1,q,1)$.
However, such projective planes are only known to exist when $q-1$ is a prime power,
whereas our approach here assumes $q$ is an odd prime power.
In particular, when $q=7$, the Bruck-Ryser-Chowla theorem implies there does not exist a $\BIBD(43,7,1)$, and nevertheless this approach implies the existence of an ECTFF for $\bbR^{43}$ that consists of $43$ subspaces of dimension $7$.

One may also ask whether a phased BIBD ETF itself contains a regular simplex.
It turns out that this is not necessarily the case.
For example, the inner products of the vectors in the phased $\BIBD(q^3+1,q+1,1)$ ETF of~\cite{FickusJMPW19} are $(q+1)$th roots of unity.
When $q$ is an even prime power, Corollary~\ref{corollary.empty triples} then implies this ETF contains no regular simplices.
Other phased BIBD ETFs do contain regular simplices.
For example, the binder of the phased BIBD ETF~\eqref{equation.5x10 phased binder} is itself a $\BIBD(10,4,2)$.
For another example, the binder of~\eqref{equation.6x16 harmonic ETF} is a $\BIBD(16,4,3)$ and the binder of any one of its Naimark complements is a $\BIBD(16,6,2)$.
As such, there is a phased $\BIBD(16,4,3)$ ETF which contains $16$ regular $6$-simplices,
and it has a Naimark complement which is a $\BIBD(16,6,2)$ ETF which contains $60$ regular $4$-simplices.
One thing we note in this example is that the ``$v$" and ``$k$" parameters of these BIBDs are related according to $16-1=(4-1)(6-1)$.
This is not a coincidence:
by Theorem~\ref{theorem.Naimark complement of phased BIBD ETF},
a phased $\BIBD(n,k,\lambda)$ ETF spans a space of dimension~\eqref{equation.dimension of phased BIBD ETF},
and the corresponding inverse Welch bound~\eqref{equation.inverse Welch bound} can be found to be $s=\frac{n-1}{k-1}$.
Apart from this minor observation, we leave the study of the binders of phased BIBD ETFs for future work.

\subsection{SIC-POVMs}

\textit{Gerzon's bound} states that if \smash{$\set{\bfphi_j}_{j=1}^{n}$} is any sequence of noncollinear equiangular vectors in $\bbF^d$,
then $n$ is at most the dimension of the real Hilbert space of all $d\times d$ self-adjoint matrices.
That is, $n\leq\binom{d+1}{2}$ when $\bbF=\bbR$, while $n\leq d^2$ when $\bbF=\bbC$.
This follows from the fact that their outer products $\set{\bfphi_j^{}\bfphi_j^*}_{j=1}^{n}$ lie in this space and are linearly independent: we have
\begin{equation*}
\ip{\bfphi_j^{}\bfphi_j^*}{\bfphi_{j'}^{}\bfphi_{j'}^*}_{\Fro}
=\Tr(\bfphi_j^{}\bfphi_j^*\bfphi_{j'}^{}\bfphi_{j'}^*)
=\Tr(\bfphi_j^*\bfphi_{j'}^{}\bfphi_{j'}^*\bfphi_j^{})
=\abs{\ip{\bfphi_j}{\bfphi_{j'}}}^2
\end{equation*}
for any $j,j'=1,\dotsc,v$, meaning the Gram matrix of $\set{\bfphi_j^{}\bfphi_j^*}_{j=1}^{n}$ is an invertible matrix of the form $(c-w)\bfI+w\bfone\bfone^*$ for some $w<c$.
By definition, a SIC-POVM is an ETF for $\bbC^d$ that achieves this bound, namely an ETF with $n=d^2$ vectors.
Such ETFs arise in quantum information theory~\cite{Zauner99,RenesBSC04},
and Zauner has famously conjectured that they exist for any $d$~\cite{Zauner99}.
They are now known to exist when $d\in\set{2,\dotsc,24,28,30,31,35,37,39,43,48,124}$,
and there is strong numerical evidence for their existence in many other cases;
see~\cite{FuchsHS17} for a recent survey.

SIC-POVMs seldom contain regular simplices.
Indeed, the inverse Welch bound~\eqref{equation.inverse Welch bound} in this case is \smash{$(d+1)^{\frac12}$},
and so this can only happen when $d+1$ is a perfect square.
Our hope is that the techniques of this paper might help inform the search for SIC-POVMs in this special family.
In particular, it is known that when $d=3$, there exists a $9$-vector Steiner ETF for $\bbC^3$ arising from a $\BIBD(3,2,1)$.
Moreover, a SIC-POVM with $d=3$ arises as the Naimark complement of the phased $\BIBD(9,3,1)$ ETF~\eqref{equation.6x9 phased BIBD ETF}, and each of the $12$ blocks of this BIBD indicate a regular simplex in this ETF.
We have also applied BinderFinder to the SIC-POVM with $d=8$ from~\cite{Hoggar98} which arises by taking all translates and modulates of a single vector indexed by $\bbZ_2^3$.
Doing so reveals its binder to be a $\BIBD(64,4,3)$, meaning this $64$-vector ETF for $\bbC^8$ contains $1008$ regular simplices.
See~\cite{DangBBA13,Malikiosis16,ApplebyBDF17} for other work on linear dependent subcollections of SIC-POVMs.

The inverse Welch bound for the Naimark complement of a SIC-POVM for $\bbC^d$ is $(d-1)(d+1)^{\frac12}$.
Applying BinderFinder to the Naimark complements of the aforementioned SIC-POVMs with $d=3$ and $d=8$ reveals that they contain no regular simplices.

\section*{Acknowledgments}
This work was partially supported by NSF DMS 1321779, AFOSR F4FGA06060J007, AFOSR
Young Investigator Research Program award F4FGA06088J001, and the U.S.\ Air Force Research Lab Summer Faculty Fellowship Program.
Add to acknowledgements: Emily J.~King was supported in part by a Zentrum f\"ur Forschungsf\"orderung der Uni Bremen Explorationsprojekt ``Hilbert Space Frames and Algebraic Geometry.''
This project began during the Workshop on Frames and Algebraic \& Combinatorial Geometry in Bremen, Germany in July 2015,
and continued as part of the Summer of Frame Theory I in Dayton, Ohio in the summer of 2016.
The views expressed in this article are those of the authors and do not reflect the official policy or position of the United States Air Force, Department of Defense, or the U.S.~Government.

\end{document}